\newtheorem{Th}{Theorem}[section]
\newtheorem{Prop}[Th]{Proposition}
\newtheorem{Lemma}[Th]{Lemma}
\newtheorem{Cor}[Th]{Corollary}
\theoremstyle{remark}
\newtheorem{Remark}{Remark}
\numberwithin{equation}{section}
\def\un{{\bf 1}}
\newcommand{\koniec}{\nopagebreak\hspace*{\fill}
\nolinebreak$\square$\vspace{5mm}\par}
\newcommand{\jed}{\mbox{\boldmath$1$}}
\newcommand{\cd}{{\cal D}}
\newcommand{\cu}{{\cal U}}
\newcommand{\cb}{{\cal B}}
\newcommand{\ca}{{\cal A}}
\newcommand{\ch}{{\cal H}}
\newcommand{\cf}{{\cal F}}
\newcommand{\cp}{{\cal P}}
\newcommand{\bs}{\mathbb{S}}
\newcommand{\Q}{\mathbb{Q}}
\newcommand{\C}{{\mathbb{C}}}
\newcommand{\Z}{{\mathbb{Z}}}
\newcommand{\N}{{\mathbb{N}}}
\newcommand{\cir}{{\mathbb{S}}}
\newcommand{\xbm}{(X,{\cal B},\mu)}
\newcommand{\ov}{\overline}
\newcommand{\beq}{\begin{equation}}
\newcommand{\eeq}{\end{equation}}
\newcommand{\vep}{\varepsilon}
\newcommand{\va}{\varphi}
\newcommand{\ot}{\otimes}
\newcommand{\la}{\lambda}
\newcommand{\wt}{\widetilde}
\begin{document}
\title{Polynomial actions of unitary operators and
idempotent ultrafilters\footnote{MSC classes:  37A45 (primary), 05D10, 47A35, 47B15}}

\author{V. Bergelson\thanks{Research partially supported
by NSF grants DMS-0901106 and DMS-1162073}\and  S. Kasjan  \and M.
Lema\'nczyk\thanks{Research supported  by Polish National Science Center grant DEC-2011/03/B/ST1/00407}}

\maketitle

\thispagestyle{empty}

\begin{abstract}Let $p$ be an idempotent ultrafilter over
$\N$. For a positive integer $N$, let ${\cal P}_{\leq N}$ denote the additive group of polynomials $P\in\Z[x]$ with ${\rm deg}\, P\leq N$ and $P(0)=0$. Given a unitary operator $U$ on a Hilbert space ${\cal H}$,
we prove, for each $N\geq1$, the existence of a unique
decomposition ${\cal H}=\bigoplus_{r\geq 1}{\cal H}^{(N)}_r$ into
closed, $U$-invariant subspaces such that \begin{itemize} \item
for any polynomial $P\in{\cal P}_{\leq N}$,  we have
$$p\, \text{-}\!\lim_{n\in\N} \left(U|_{{\cal
H}_r^{(N)}}\right)^{P(n)}=0_{{\cal H}_r^{(N)}}\;\mbox{or}\;
Id_{{\cal H}_r^{(N)}},\; \mbox{for each}\; r\geq1 ;$$

\item for
each $r\neq s$ there exists $Q\in{\cal P}_{\leq N}$ such that
$$p\,\text{-}\!\lim_{n\in\N} \left(U|_{{\cal H}_r^{(N)}}\right)^{Q(n)}\neq
p\,\text{-}\!\lim_{n\in\N} \left(U|_{{\cal H}_s^{(N)}}\right)^{Q(n)}.$$
\end{itemize}
In connection with this result we introduce the notion of rigidity
group. Namely, a subgroup $G\subset {\cal P}_{\leq N}$ is called
an $N$-rigidity group if there exist an idempotent ultrafilter $p$
over $\N$ and a unitary operator $U$ on a Hilbert space $\cal H$
such that \beq\label{ab1} G=\{P\in{\cal P}_{\leq N}:\:
p\,\text{-}\!\lim_{n\in\N} U ^{P(n)}=Id\}\eeq and \beq\label{ab2}
p\,\text{-}\!\lim_{n\in\N} U ^{Q(n)}=0\;\;\mbox{for each}\;\;Q\in{\cal
P}_{\leq N}\setminus G.\eeq
 The main result of the paper states that a subgroup $G\subset
{\cal P}_{\leq N}$ satisfying $\max\{{\rm deg}\, P:\:P\in G\}=N$
is an $N$-rigidity group if and only if $G$ has finite index in
${\cal P}_{\leq N}$.
\end{abstract}

\tableofcontents

\section*{Introduction} One of the goals of this paper is to establish a new Hilbert space decomposition theorem for polynomial actions of unitary operators which may be seen as a far reaching refinement of classical splitting results summarized in the following theorem.

\begin{Th}\label{tw1wstep}
Let $\ch$ be a Hilbert space~\footnote{We are tacitly assuming that $\ch$ is separable. It is not hard to see that this assumption can be made without the loss of generality. The theorems in this paper which  pertain to unitary operators on separable Hilbert spaces hold for non-separable spaces as well.} and $U:\ch\to\ch$ a unitary operator (we write for short $U\in\cu(\ch)$). Then
\beq\label{wstep1}\ch=\ch_{inv}\oplus\ch_{erg},
\eeq
where $\ch_{inv}=\{f\in\ch:\:Uf=f\}$ and $\ch_{erg}=\{f\in\ch:\lim_{N-M\to\infty}\frac1{N-M}\sum_{n=M}^{N-1}U^nf=0\}$;
\beq\label{wstep2}
\ch=\ch_{comp}\oplus\ch_{wm},\eeq
where $$\ch_{comp}=\{f\in\ch:\:\ov{\{U^nf\}}_{n\in\Z}\;\mbox{is compact in the norm topology}\}=
\ov{\mbox{\rm span}}\{f\in\ch:\:(\exists \lambda\in\C)\;Uf=\lambda f\}$$ and $$\ch_{wm}=\{f\in\ch:\:(\forall g\in\ch)\;\lim_{N-M\to\infty}\frac1{N-M}\sum_{n=M}^{N-1}|\langle U^nf,g\rangle|=0\}.$$
\end{Th}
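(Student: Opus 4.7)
The plan is to prove the two decompositions separately, both resting on spectral theory. Throughout, I would freely use the spectral theorem: every $f\in\ch$ has a finite positive Borel measure $\sigma_f$ on $\T$ (its spectral measure) satisfying $\langle U^nf,f\rangle=\widehat{\sigma_f}(n)$, with analogous complex measures $\sigma_{f,g}$ for pairs.

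For the decomposition (\ref{wstep1}), the tool is von~Neumann's mean ergodic theorem. Note first that $\ch_{inv}=\ker(U-Id)$ is closed; let $P$ be the orthogonal projection onto $\ch_{inv}$. The core claim is that $A_{M,N}f:=\frac{1}{N-M}\sum_{n=M}^{N-1}U^nf\to Pf$ in norm for every $f\in\ch$. This is verified on the dense subspace $\ch_{inv}\oplus\overline{\{(U-Id)g:g\in\ch\}}$: on $\ch_{inv}$ the averages equal $f$, while on coboundaries they telescope to $(U^N-U^M)g/(N-M)$, of norm at most $2\|g\|/(N-M)\to 0$. A standard $\vep/3$ argument extends convergence to all of $\ch$. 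Density of $\ch_{inv}+(U-Id)\ch$ in $\ch$ follows because its orthogonal complement sits in $\ker(U^{\ast}-Id)=\ker(U-Id)=\ch_{inv}$, forcing a contradiction for any vector orthogonal to $\ch_{inv}$ itself. Consequently $\ch_{erg}=\ker P=\ch_{inv}^\perp$, which gives (\ref{wstep1}).

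For the decomposition (\ref{wstep2}), I would argue via the atomic/continuous splitting of spectral measures. Set $\ch_d:=\{f:\sigma_f\text{ is purely atomic}\}$ and $\ch_c:=\{f:\sigma_f\text{ is continuous}\}$. Both are closed and $U$-invariant, mutually orthogonal via the Lebesgue decomposition applied to joint spectral measures, and together span $\ch$. It remains to identify $\ch_d=\ch_{comp}$ and $\ch_c=\ch_{wm}$. For the first, an eigenvector $Uf=\lambda f$ corresponds to $\sigma_f=\|f\|^2\delta_\lambda$, so $\overline{\mathrm{span}}\{\text{eigenvectors}\}\subseteq\ch_d$; the reverse inclusion follows by truncating $\sigma_f=\sum c_k\delta_{\lambda_k}$ and approximating in the spectral $L^2$-model. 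The identity $\overline{\mathrm{span}}\{\text{eigenvectors}\}=\ch_{comp}$ is routine: finite sums of eigenvectors clearly have precompact orbits, and for the converse one observes that if $\sigma_f$ has a nontrivial continuous part, Wiener's lemma produces averages of $\|U^{n_k}f-U^{n_\ell}f\|$ bounded away from zero, preventing precompactness. For $\ch_c=\ch_{wm}$, Wiener's lemma yields
$$\lim_{N-M\to\infty}\frac{1}{N-M}\sum_{n=M}^{N-1}|\widehat{\sigma_{f,g}}(n)|^2=\sum_{z\in\T}|\sigma_{f,g}(\{z\})|^2,$$
whose vanishing for every $g$ characterizes continuity of $\sigma_f$; Cauchy--Schwarz bridges between the $L^2$ and $L^1$ Cesàro averages appearing in the definition of $\ch_{wm}$.

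The main obstacle will be the clean identification of $\ch_{comp}$ with the closed span of eigenvectors. The mean ergodic theorem and Wiener's lemma are standard inputs, but passing from norm-precompactness of $\{U^nf\}_{n\in\Z}$ to pure atomicity of $\sigma_f$ requires genuine care: one must rule out continuous spectral components using only the geometric precompactness hypothesis, which is best done by combining Wiener's lemma with an extraction argument for an orthogonal-like sequence violating total boundedness.
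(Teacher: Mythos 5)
Your argument is correct. Note, however, that the paper offers no proof of this theorem at all: it is stated in the Introduction as a summary of classical results, with the ergodic splitting (\ref{wstep1}) attributed to the circle of ideas around von Neumann's theorem (reference \cite{Ha}) and the splitting (\ref{wstep2}) cited as a special case of the Jacobs--Glicksberg--de Leeuw decomposition (reference \cite{NN2}). For (\ref{wstep1}) you reproduce the standard mean ergodic theorem proof (telescoping on coboundaries, $\ker(U^*-Id)=\ker(U-Id)$ for density, uniform boundedness of the averages for the $\vep/3$ step), which is exactly the classical route. For (\ref{wstep2}) you take a genuinely different path from the cited one: instead of the general JdLG machinery for weakly almost periodic operator semigroups, you split each spectral measure into its atomic and continuous parts and identify $\ch_{comp}$ with the discrete part and $\ch_{wm}$ with the continuous part via Wiener's lemma. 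This is more elementary and is available precisely because $U$ is unitary; it does not extend to general contractions or operator semigroups, which is what the JdLG framework buys. It is also more in the spirit of the rest of the paper, which argues the same way elsewhere (the proof of Proposition~\ref{choj9} uses exactly the discrete/continuous splitting together with a density-one sequence along which the continuous part tends to zero weakly). The two points you flag as delicate --- extracting an almost-orthogonal subsequence $U^{n_k}f$ from a nontrivial continuous component to defeat precompactness, and the Cauchy--Schwarz passage between the $L^1$ and $L^2$ Ces\`aro averages of $|\langle U^nf,g\rangle|$ --- are indeed the only places requiring care, and both are handled correctly in your sketch.
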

The decomposition~(\ref{wstep1}) in the above theorem is the classical {\em ergodic} Hilbert space decomposition which is behind the von Neumann's ergodic theorem (see, e.g.\ \cite{Ha}). The more interesting decomposition~(\ref{wstep2}) is a special case of Jacobs-Glicksberg-de Leeuw decomposition~\cite{NN2} which is connected with the notion of {\em weak mixing}.

Recall that a unitary operator $U\in\cu(\ch)$ is called {\em weakly mixing} if it has no non-trivial eigenvectors, meaning that if for some $\lambda\in \C$ and $f\in\ch$ one has $Uf=\lambda f$, then $f=0$. The notion of weak mixing was introduced in~\cite{Ko-Ne} and has a multitude of equivalent formulations (see, e.g.\ \cite{Be1} or \cite{Be-Go}). An important class of weakly mixing operators has its origins in the theory of measure-preserving systems. Given a
%standard Borel~\footnote{The standardness assumption of the underlying probability space is a usual and often %necessary assumption in ergodic theory; one advantage of it is that the corresponding Hilbert space $L^2\xbm$ is %separable. This assumption is superfluous in some recurrent type results, see e.g.\ Theorems~\ref{tw2wstep} %and~\ref{tw3wstep} below.}
measure space $\xbm$ and an invertible measure-preserving transformation $T:X\to X$, define the unitary operator $U_T$ on $L^2\xbm$ by the formula $U_T(f)(x)=f(Tx)$. The transformation $T$ is called {\em weakly mixing} if $U_T$ is a weakly mixing operator on the space $\ch=L^2_0\xbm:=\{f\in L^2\xbm:\: \int f\,d\mu=0\}$.

It follows from~(\ref{wstep2}) that a unitary operator $U\in\cu(\ch)$ is weakly mixing if and only if for any $f,g\in\ch$ one has $\lim_{N-M\to \infty}\frac1{N-M}\sum_{n=M}^{N-1}|\langle U^nf,g\rangle|=0$. This, in turn, implies that $U$ is weakly mixing if and only if for any $f\in\ch$ there exists a set $E\subset\N:=\{1,2,\ldots\}$ satisfying
$d(E):=\lim_{N-M\to\infty}\frac{|E\cap[M,\ldots,N-1]|}{N-M}=0$ such that $U^nf\to 0$ weakly, when $n\to\infty,n\notin E$~\footnote{If $\ch$ is separable, one can actually show that $U\in\cu(\ch)$ is weakly mixing if and only if there exists $E\subset\N$ with $d(E)=0$ such that, for any $f\in \ch$, $U^nf\to0$ as $n\to\infty$, $n\notin E$.}. It is the presence of the exceptional set $E$ which  distinguishes between the notion of weak mixing and that of strong mixing (which  is defined by the condition $U^nf\to 0$ weakly, when $n\to \infty$). One can show that a ``generic'' unitary operator is weakly but not strongly mixing (see for example~\cite{Ha}, \cite{Na}). Moreover, the generic unitary operator is simultaneously weakly mixing and {\em rigid} (see, e.g.\ \cite{Be-Ju-Le-Ro} or \cite{Na}), meaning that, there exists a
sequence $n_k\to\infty$ such that for every $f\in\ch$, $U^{n_k}f\to f$ (in $\ch$), when $k\to\infty$. Since the exceptional set, along which a weakly mixing operator $U$ is rigid, is of zero density, it does not affect the value of the Ces\`aro limits
$\lim_{N-M\to\infty}\frac1{N-M}\sum_{n=M}^{N-1}|\langle U^nf,g\rangle|$. Consequently,  if one is  interested in distinguishing between various classes of weakly mixing operators based on their rigidity properties, the Ces\`aro averages may not be so effective a tool and one may want to look for some alternative notions of convergence. We will see below that the notion of convergence along the idempotent ultrafilters provides a  satisfactory alternative to Ces\`aro limits and seems to be especially useful for the study of behaviour of unitary operators along polynomials.

To provide an instructive glimpse into the effectiveness of idempotent ultrafilters, let us briefly discuss a polynomial generalization of the classical Khintchine's recurrence theorem which leads to interesting combinatorial applications. The celebrated Poincar\'e's recurrence theorem states that for any measure-preserving transformation $T$ on a probability space $\xbm$ and any $A\in\cb$ with $\mu(A)>0$, there exists $n\in\N$ such that $\mu(A\cap T^{-n}A)>0$. Khintchine's refinement \cite{Kh} of Poincar\'e's  theorem can be formulated as follows.  Recall that a subset of $\Z$ (or of $\N$) is called {\em syndetic}, if it has bounded gaps.

\begin{Th}\label{tw2wstep} Let $\xbm$ be a  probability space and $T:X\to X$ an invertible measure-preserving transformation. Assume that $A\in\cb$, $\mu(A)>0$. Then for any $\vep>0$ the set $\{n\in\Z:\:\mu(A\cap T^{-n}A)>\mu(A)^2-\vep\}$ is syndetic.
\end{Th}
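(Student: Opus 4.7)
The plan is to translate the statement into an assertion about the Koopman operator $U:=U_T$ on $L^2\xbm$ and apply the ergodic decomposition~(\ref{wstep1}) from Theorem~\ref{tw1wstep}. Setting $f:=\un_A$ and $a:=\mu(A)$, we have the identity $\mu(A\cap T^{-n}A)=\langle U^n f,f\rangle$, so it suffices to prove that for every $\vep>0$ the set $S_\vep:=\{n\in\Z:\,\langle U^n f,f\rangle>a^2-\vep\}$ is syndetic.

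Decompose $f=Pf+g$, where $P$ is the orthogonal projection onto $\ch_{inv}$ and $g:=f-Pf\in\ch_{erg}$. Since the constant function $\un$ is $U$-invariant, $\un\in\ch_{inv}$, so $\int Pf\,d\mu=\langle Pf,\un\rangle=\langle f,\un\rangle=a$, and the Cauchy--Schwarz inequality on the probability space $\xbm$ gives $\|Pf\|^2\geq a^2$. Using $U$-invariance of $Pf$ together with the orthogonality of $\ch_{inv}$ and $\ch_{erg}$, a short computation produces $\langle U^n f,f\rangle=\|Pf\|^2+\langle U^n g,g\rangle$, and the defining property of $\ch_{erg}$ (which is uniform in the starting index $M$) implies
$$\frac{1}{N-M}\sum_{n=M}^{N-1}\langle U^n f,f\rangle\ \longrightarrow\ \|Pf\|^2\ \geq\ a^2 \qquad\text{as}\quad N-M\to\infty.$$

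The main step is to upgrade this uniform Ces\`aro convergence to syndeticity of $S_\vep$. Fix $\vep>0$ and choose $L$ so large that the Ces\`aro average above exceeds $a^2-\vep/2$ on every interval $[M,N)\subset\Z$ with $N-M\geq L$. If $S_\vep$ had a gap of length at least $L$ over some interval $[M,M+L)$, then $\langle U^n f,f\rangle\leq a^2-\vep$ for every $n$ in that interval, forcing the corresponding Ces\`aro average to be at most $a^2-\vep$, contradicting the choice of $L$. Hence $S_\vep$ has gaps uniformly bounded by $L$, which is precisely syndeticity. The only delicate point is the uniformity in $M$ of the Ces\`aro convergence, and this is exactly what the form $\frac{1}{N-M}\sum_{n=M}^{N-1}$ appearing in the definition of $\ch_{erg}$ in Theorem~\ref{tw1wstep} is designed to supply.
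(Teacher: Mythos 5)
Your proposal is correct and follows essentially the same route as the paper: both rest on the splitting~(\ref{wstep1}), the identity $\mu(A\cap T^{-n}A)=\langle U^nf,f\rangle$ with $f=\un_A$, the Cauchy--Schwarz bound $\|{\rm proj}_{\ch_{inv}}f\|^2\geq\mu(A)^2$, and the uniformity in $M$ of the Ces\`aro convergence to extract syndeticity. You merely make explicit the final gap-length argument that the paper leaves implicit.
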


To prove Theorem~\ref{tw2wstep} one can use von Neumann's ergodic theorem. Let $f=\un_A$ and let $$\lim_{N-M\to\infty}\frac1{N-M}\sum_{n=M}^{N-1}U_T^nf=f^\ast,$$ where $f^\ast={\rm proj}_{\ch_{inv}}f$ is the orthogonal projection of $f$ on the space of $U_T$-invariant functions. Then we have
$$
\lim_{N-M\to\infty}\frac1{N-M}\sum_{n=M}^{N-1}\mu(A\cap T^{-n}A)=\lim_{N-M\to\infty}\frac1{N-M}\sum_{n=M}^{N-1}\int f\cdot U_T^n(f)\,d\mu$$
$$
=\int f\cdot {\rm proj}_{\ch_{inv}}f\,d\mu=\int {\rm proj}_{\ch_{inv}}f\cdot {\rm proj}_{\ch_{inv}}f\,d\mu\cdot\int \un\cdot \un\,d\mu$$$$\geq \left(\int {\rm proj}_{\ch_{inv}}f\cdot \un\,d\mu\right)^2=\left(\int f\,d\mu\right)^2=\mu(A)^2.$$
(We used the fact that ${\rm proj}_{\ch_{inv}}$ is a self-adjoint operator and the Cauchy-Schwarz inequality)~\footnote{This proof is essentially due to Hopf \cite{Ho}. For a different proof of combinatorial nature, see \cite{Be}, Section~5.}. Consider now the following polynomial extension of Poincar\'e's theorem, obtained by Furstenberg \cite{Fu}.

\begin{Th}\label{tw3wstep} Let $\xbm$ be a  probability space and $T:X\to X$ an invertible measure-preserving transformation. Assume that $A\in\cb$, $\mu(A)>0$. Then for any $\vep>0$ and any polynomial $P\in\Z[x]$ with $P(0)=0$, there  are arbitrarily large $n$ such that  $\mu(A\cap T^{P(n)}A)>\mu(A)^2-\vep$~\footnote{Theorem~\ref{tw3wstep} implies S\'ark\"ozy's theorem stating that if $E\subset \N$ is a subset of positive upper Banach density: $d^\ast(E):=\limsup_{N-M\to\infty}\frac{|E\cap [M,N-1]|}{N-M}>0$, and $P\in\Z[x]$ satisfies $P(0)=0$, then one can find infinitely many $n\in\Z$ such that, for some $x,y\in E$, $x-y= P(n)$.}.
\end{Th}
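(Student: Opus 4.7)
The plan is to work in $\mathcal{H} = L^2(X,\mu)$ with the Koopman operator $U = U_T$ and exploit $p$-limits for an idempotent ultrafilter $p \in \beta\N\setminus\N$ (whose existence is Ellis's theorem). The strategy mimics the Hopf-style Cauchy-Schwarz argument reviewed after Theorem~\ref{tw2wstep} for Khintchine's theorem, but replaces the Ces\`aro mean by a $p$-limit and replaces the projection onto $\ch_{inv}$ by a polynomial-analogue projection.

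First I would put $f = \un_A$ and observe, using $T$-invariance of $\mu$, that $\mu(A\cap T^{P(n)}A) = \langle U^{P(n)}f, f\rangle$. The central structural input I need is: for any idempotent ultrafilter $p$ and any $P \in \Z[x]$ with $P(0) = 0$, the weak operator $p$-limit
\[
\Pi \;:=\; p\text{-}\lim_{n \in \N} U^{P(n)},
\]
defined by $\langle \Pi g, h\rangle := p\text{-}\lim_n \langle U^{P(n)} g, h\rangle$, exists (each scalar sequence is bounded, so $p$-limits exist) and is an \emph{orthogonal projection} ($\Pi = \Pi^* = \Pi^2$). This is precisely the kind of fact that the paper's main decomposition codifies; its proof combines the defining idempotency $p+p=p$ with a polynomial van der Corput/PET argument to reduce $\deg P \geq 2$ to the linear case.

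Granting this, since $U\un = \un$ and $P(0) = 0$ imply $U^{P(n)}\un = \un$ for every $n$, we have $\Pi\un = \un$. Consequently,
\[
\langle \Pi f, \un\rangle \;=\; \langle f, \Pi\un\rangle \;=\; \langle f, \un\rangle \;=\; \mu(A),
\]
and Cauchy-Schwarz with $\|\un\|=1$ gives $\|\Pi f\|^2 \geq \mu(A)^2$. Because $\Pi$ is a self-adjoint projection, $\langle \Pi f, f\rangle = \langle \Pi f, \Pi f\rangle = \|\Pi f\|^2$, hence
\[
p\text{-}\lim_{n\in\N} \mu(A\cap T^{P(n)}A) \;=\; \langle \Pi f, f\rangle \;\geq\; \mu(A)^2.
\]
For every $\vep > 0$ this forces $E_\vep := \{n \in \N: \mu(A\cap T^{P(n)}A) > \mu(A)^2 - \vep\}$ to lie in $p$; since $p$ is non-principal, $E_\vep$ is infinite, producing arbitrarily large $n$ as required.

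The only genuine obstacle is the structural claim that $\Pi$ is an orthogonal projection: existence of the $p$-limit is automatic from uniform boundedness, but self-adjointness and idempotency really use that $p$ is an idempotent. For $\deg P = 1$ this is the classical fact that $p\text{-}\lim_n U^n$ is a projection, proved by $\Pi^2 = p\text{-}\lim_n p\text{-}\lim_m U^{n+m} = (p+p)\text{-}\lim U^{n} = p\text{-}\lim U^n = \Pi$; for $\deg P \geq 2$ the same outline persists but requires an inductive polynomial-degree reduction, which is exactly the deeper machinery the paper sets up. Everything else in the proof is a one-line Cauchy-Schwarz computation.
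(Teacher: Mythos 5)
Your argument is correct, and modulo the one structural input you flag it is complete: once $\Pi=p\,\text{-}\!\lim_{n}U^{P(n)}$ is known to be an orthogonal projection, the chain $\langle\Pi f,f\rangle=\|\Pi f\|^2\geq|\langle\Pi f,\un\rangle|^2=\mu(A)^2$ together with the fact that every member of a (necessarily non-principal) idempotent ultrafilter is infinite does produce arbitrarily large good $n$. This is, however, not the route the paper takes for Theorem~\ref{tw3wstep} itself: there the authors simply invoke Furstenberg's original argument, which rests on the spectral theorem, Weyl-type uniform distribution and the Ces\`aro limits (\ref{wstep4})--(\ref{wstep5}), with no ultrafilters involved. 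What you wrote is instead, almost verbatim, the paper's proof of the \emph{stronger} Corollary~\ref{wn1wstep} (see footnote~\ref{stopka8}), whose key input is Theorem~\ref{tw5wstep} (quoted from \cite{Be}): $p\,\text{-}\!\lim_{n}U^{P(n)}={\rm proj}_{\cal F}$ for every idempotent $p$. The trade-off is worth noting. Furstenberg's proof uses lighter machinery, but since the Ces\`aro limit $c_A$ in (\ref{wstep5}) can be strictly smaller than $\mu(A)^2$, extracting the $\mu(A)^2-\vep$ bound requires the finer uniform-distribution analysis rather than a one-line averaging argument. Your ultrafilter proof leans on the nontrivial projection theorem --- whose proof for ${\rm deg}\,P\geq2$ genuinely needs the $p$-version of van der Corput and a PET-style induction, cf.\ Lemma~\ref{vdC1} --- but it delivers more: the set $R_\vep(A;P)$ lies in \emph{every} idempotent ultrafilter, i.e.\ it is ${\rm IP}^\ast$ and hence syndetic, not merely unbounded. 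One small addendum to your last paragraph: for ${\rm deg}\,P=1$ your computation establishes idempotency of $\Pi$, and self-adjointness then comes for free, since a contractive idempotent on a Hilbert space is automatically an orthogonal projection; it is only the idempotency for higher-degree $P$ that requires the degree-reduction machinery.
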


To prove Theorem~\ref{tw3wstep},  Furstenberg invokes the spectral theorem\footnote{For  a ``geometric'' proof avoiding the spectral theorem, see \cite{Be87}, \cite{Be}.} and some classical results on uniform distribution. The polynomial recurrence theorem in question follows from the fact that for any $f\in\ch$ and any unitary operator $U\in\cu(\ch)$ the strong limit
\beq\label{wstep4}\lim_{N-M\to\infty}\frac1{N-M}\sum_{n=M}^{N-1} U^{P(n)}f\;\;\mbox{exists and, in addition,}\eeq
\beq\label{wstep5}
\lim_{N-M\to\infty}\frac1{N-M}\sum_{n=M}^{N-1}\mu(A\cap T^{P(n)}A)=:c_A>0.\eeq
While it is not known what is the ``optimal'' value of the constant $c_A$, one can provide examples where $c_A$ is strictly smaller than $\mu(A)^2$. There is one more distinction between the polynomial result~(\ref{wstep4}) and the von Neumann's ergodic theorem. Namely, while the limit of the ``linear'' Ces\`aro averages in von Neumann's theorem is an orthogonal projection, this is no longer the case for the polynomial averages in~(\ref{wstep4}). To ``fix'' this situation, we will introduce the ultrafilter analogues of the limits~(\ref{wstep4}) and (\ref{wstep5}) leading to a polynomial version of Khintchine's recurrence theorem which even in the case of linear polynomials gives more than Theorem~\ref{tw2wstep}.

We first briefly summarize some facts concerning the idempotent ultrafilters on $\N$. The reader will find more details in Section~\ref{section:p-limits} and the references indicated there.

An ultrafilter $p$ on $\N$ is a family of subsets of $\N$ satisfying (i) $\emptyset\notin p$, (ii) $\N\in p$, (iii) $A\in p$ and $A\subset B\subset\N$ implies $B\in p$, (iv) $A,B\in p$ implies $A\cap B\in p$ and (v) if $r\in\N$ and $\N=A_1\cup\ldots\cup A_r$, then some $A_i\in p$. (In other words, an ultrafilter is a maximal filter.) The space of ultrafilters on $\N$ is denoted by $\beta\N$ (and is identified with the Stone-$\check{\mbox{C}}$ech compactification of $\N$). Any element $n\in\N$ can be identified with the ultrafilter $\{A\subset\N:\:n\in A\}$. Given $A\subset\N$, let $\ov{A}:=\{p\in\beta\N:\:A\in p\}$. The family $\{\ov{A}:\:A\subset\N\}$ forms a basis for the open sets (and a basis for the closed sets as well) of $\beta\N$. The operation of addition on $\N$ can be extended to $\beta\N$ as follows. Given $p,q\in\beta\N$ and $A\subset\N$,
\beq\label{sty1}A\in p+q \Leftrightarrow\{n\in\N:\: A-n\in p\}\in q~\footnote{For $A\subset\N$ and $n\in\N$, the set $A-n$ is defined as $\{y\in\N:\:y+n\in A\}$.}.\eeq
 Formula (\ref{sty1}) makes $(\beta\N,+)$ a compact left topological semigroup~\footnote{\label{stopkaSTYCZ}Making $(\beta\N,+)$ a left topological semigroup means that for each $p\in\beta\N$ the function $\lambda_p(q)=p+q$ is continuous.}. By Ellis' lemma \cite{El}, any compact left topological semigroup has an idempotent.  Note that whenever $p\in\beta\N$ is an idempotent, by~(\ref{sty1}), we have
\beq\label{p2=p}
A\in p\Leftrightarrow A\in p+p\Leftrightarrow \{n\in\N:\:A-n\in p\}\in p.\eeq

Let now $X$ be a topological space. Given a sequence $(x_n)\subset X$ and an ultrafilter $p\in\beta\N$, we will write $p\,\text{-}\!\lim_{n\in\N}x_n=x$ if for any neighbourhood $U\ni x$, the set $\{n\in\N:\: x_n\in U\}\in p$.  Then, whenever $X$ is a compact Hausdorff space,
$p\,\text{-}\!\lim_{n\in\N}x_n$ exists and is unique. Moreover, if $p=p+p$, then~(\ref{p2=p}) implies
\beq\label{wstep6}
p\,\text{-}\!\lim_{n\in\N}x_n= p\,\text{-}\!\lim_{n\in\N}\left(p\,\text{-}\!
\lim_{m\in\N}x_{n+m}\right).\eeq

An immediate application of the introduced concepts gives an ultrafilter analogue of the von Neumann's ergodic theorem (and also a natural analogue of the splitting~(\ref{wstep1}) which we encountered in Theorem~\ref{tw1wstep}). Assume that $U\in\cu(\ch)$, let $r>0$,  and let $f\in\ch$ with $\|f\|=r$. The $r$-ball $X:=\{g\in\ch:\:\|g\|\leq r\}$, equipped with a metric $d$ induced by the weak topology, is a $U$-invariant compact Hausdorff space. Let $p\in \beta\N$, $p+p=p$ and set $f^\ast:=p\,\text{-}\!\lim_{n\in\N} U^nf$. Utilizing the formula~(\ref{wstep6}), we have
$$
f^\ast=p\,\text{-}\!\lim_{n\in\N} U^nf= p\,\text{-}\!\lim_{n\in\N}\left( p\,\text{-}\!\lim_{m\in\N} U^{n+m}f\right)$$
$$
= p\,\text{-}\!\lim_{n\in\N} U^n\left( p\,\text{-}\!\lim_{m\in\N} U^mf\right)= p\,\text{-}\!\lim_{n\in\N} U^nf^\ast.$$
It follows that for any $f\in\ch$ and any idempotent $p\in\beta\N$, $f^\ast= p\,\text{-}\!\lim_{n\in\N} U^nf$ is a rigid vector\footnote{\label{raa}A vector $f\in {\cal H}$ is called a {\em rigid vector} for $U$ if
for some increasing sequence $(n_k)\subset\N$ we have $U^{n_k}f\to
f$ strongly. Note that in this particular case, the strong convergence is equivalent to $U^{n_k}f\to f$ weakly. An operator $U$ is called {\em rigid along} $(n_k)$ if
$U^{n_k}\to Id$ strongly.}.  Indeed,  notice that, for each $\vep>0$, the set $\{n\in\N:\:d(U^nf,f)<\vep\}$ is a member of $p$ and hence is not empty. Therefore, we can find an increasing subsequence $(n_i)$ such that $d(U^{n_i}f,f)\to0$ which is equivalent to $U^{n_i}f\to f$ in $\ch$.
We remark in passing that it is not hard to show that any rigid vector is of the form $p\,\text{-}\!\lim_{n\in\N} U^nf$ for some idempotent $p\in \beta\N$ and $f\in \ch$.

Let now $Y$ be the unit ball in the space of bounded operators on $\ch$. Then $Y$, equipped with a metric induced by  the weak operator topology, becomes a compact  semitopological (i.e.\ left- and right topological) semigroup. Assume that $p\in\beta\N$, $p+p=p$ and let $W:=p\,\text{-}\!\lim_{n\in\N} U^n$. It is easy to check that $W$ is a self-adjoint idempotent and hence an orthogonal projection on the subspace of $p$-rigid vectors. We summarize this discussion in the following theorem.

\begin{Th}\label{tw4wstep} Assume that $U\in\cu(\ch)$ and let $p\in\beta\N$, $p+p=p$. Then $\ch=\ch_r\oplus\ch_m$, where $\ch_r=\{f\in \ch:\:p\,\text{-}\!\lim_{n\in\N}  U^nf=f\}$\footnote{Note that $\ch_{comp}\subset\ch_r$ because for each $\la\in\C$, $|\la|=1$, and each idempotent $p\in\beta\N$, we have $p\,\text{-}\!\lim_{n\in\N}\lambda^n=1$, so if $Uf=\la f$, $p\,\text{-}\!\lim_{n\in\N}U^nf=f$.} and $\ch_m=\{f\in\ch:\: p\,\text{-}\!\lim_{n\in\N} U^nf=0\}$.\end{Th}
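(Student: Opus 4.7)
The plan is to exhibit the projection $W$ whose range is $\ch_r$ and whose kernel is $\ch_m$ by taking the $p$-limit of $U^n$ in the weak operator topology, and then to show that $W$ is an orthogonal projection. Most of the required machinery is already in place in the paragraph immediately preceding the theorem.

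First I would set $Y$ to be the unit ball of $B(\ch)$ equipped with the weak operator topology, so that $Y$ is compact Hausdorff and, crucially, a semitopological semigroup under composition (both one-sided multiplications are WOT-continuous, as one checks by pairing with vectors and using that $T\mapsto T^\ast$ is WOT-continuous). Since $\|U^n\|=1$ for all $n\in\N$, the net $(U^n)$ lies in $Y$, so by compactness
\[
W\;:=\;p\text{-}\!\lim_{n\in\N}U^n
\]
exists in $Y$, i.e.\ $\langle Wf,g\rangle=p\text{-}\!\lim_n\langle U^nf,g\rangle$ for all $f,g\in\ch$.

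Next I would verify that $W$ is an idempotent. Using that right multiplication by $W$ is WOT-continuous and then that left multiplication by each $U^n$ is WOT-continuous,
\[
W\cdot W\;=\;p\text{-}\!\lim_{n\in\N}\bigl(U^n\cdot W\bigr)\;=\;p\text{-}\!\lim_{n\in\N}\Bigl(p\text{-}\!\lim_{m\in\N}U^{n+m}\Bigr),
\]
and the right-hand side equals $p\text{-}\!\lim_{n\in\N}U^n=W$ by the idempotent identity (\ref{wstep6}). Since $\|W\|\le1$ and $W^2=W$, a standard Hilbert space argument (write any vector as $\xi+\eta$ with $\xi\in\mathrm{range}(W)$, $\eta\in\ker W$, and use $\|W(\xi+t\eta)\|^2\le\|\xi+t\eta\|^2$ for all $t\in\C$ to conclude $\xi\perp\eta$) forces $W$ to be an orthogonal projection. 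Thus $W=W^\ast=W^2$, and we get the orthogonal splitting
\[
\ch\;=\;\mathrm{range}(W)\oplus\ker(W).
\]

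It remains to identify these summands with $\ch_r$ and $\ch_m$. If $f\in\ker W$, then $p\text{-}\!\lim_n\langle U^nf,g\rangle=0$ for every $g$, which is precisely weak $p$-convergence of $U^nf$ to $0$, so $f\in\ch_m$; conversely $\ch_m\subset\ker W$ by definition. If $Wf=f$, then $p\text{-}\!\lim_n U^nf=f$ in the weak topology. Because $U$ is unitary one has $\|U^nf-f\|^2=2\|f\|^2-2\,\mathrm{Re}\langle U^nf,f\rangle$, whose $p$-limit is $0$, so the convergence is in fact in the norm topology, giving $f\in\ch_r$; the reverse containment is immediate. I expect no genuine obstacle here: the only delicacy is that multiplication in $(Y,\text{WOT})$ is only separately continuous, which is why one has to insert the limits one operator at a time when proving $W^2=W$, rather than passing to a joint limit.
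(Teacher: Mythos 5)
Your proof is correct and follows essentially the same route as the paper: the authors also define $W=p\,\text{-}\!\lim_{n\in\N}U^n$ in the weak operator topology on the unit ball of $B(\ch)$, use the idempotent identity~(\ref{wstep6}) together with separate continuity of multiplication to get $W^2=W$, and then observe that $W$ is a self-adjoint idempotent, hence the orthogonal projection onto the space of $p$-rigid vectors. You merely supply the details (the contractive-idempotent argument for self-adjointness and the weak-to-strong upgrade on $\ch_r$) that the paper leaves as ``easy to check.''
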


\begin{Remark} When dealing with $p\,$-limits, we usually use weak convergence. It is however worth noticing  that
 the relation $p\,\text{-}\!\lim_{n\in\N}U^nf=f$ on the subspace $\ch_r$ holds in the weak topology if and only if it holds in the strong topology, cf.\ footnote~\ref{raa}.
\end{Remark}

Denote by $\cp$ the group of polynomials $P\in\Z[x]$ satisfying $P(0)=0$. The following result  represents a polynomial extension of Theorem~\ref{tw4wstep}.

\begin{Th}[\cite{Be}]\label{tw5wstep} For each
unitary operator $U\in\cu(\ch)$, each $p\in\beta\N$, $p+p=p$,
and each polynomial $P\in\cp$,
$$
p\,\text{-}\!\lim_{n\in\N} U^{P(n)}={\rm proj}_{\cal F},$$ where $\cal F$
is a closed, $U$-invariant subspace of $\cal H$.
\end{Th}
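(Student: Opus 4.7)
The plan is to begin by observing that $W:=p\,\text{-}\!\lim_{n\in\N}U^{P(n)}$ exists in the weak operator topology: the closed unit ball of $\cb(\ch)$ is compact Hausdorff in that topology, so $W$ exists as a contraction (a weak-operator limit of unitaries). Since $U$ commutes with every $U^{P(n)}$, it commutes with $W$; consequently, the range of $W$ will automatically be $U$-invariant once $W$ is shown to be an orthogonal projection. Because $W$ is already a contraction, it suffices to verify $W^{2}=W$: any contractive idempotent on a Hilbert space is automatically self-adjoint, and hence an orthogonal projection.

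To establish $W^{2}=W$, I would pass to the spectral representation of $U$. Identify $\ch$ with $L^{2}(X,\mu)$ so that $U$ acts as multiplication by a unit-modulus measurable function $\xi$; then $U^{P(n)}$ is multiplication by $\xi^{P(n)}$. For each $x\in X$, the scalar limit $\phi(x):=p\,\text{-}\!\lim_{n\in\N}\xi(x)^{P(n)}$ exists in the closed unit disk by compactness, and a dominated-convergence argument applied to the matrix elements $\langle U^{P(n)}f,g\rangle=\int\xi^{P(n)}f\,\overline{g}\,d\mu$ shows that $W$ is multiplication by $\phi$. The condition $W^{2}=W$ then amounts to the pointwise assertion that $\phi_{P}(z):=p\,\text{-}\!\lim_{n\in\N}z^{P(n)}\in\{0,1\}$ for every $z\in\cir$; once this is established, $W$ is the projection onto the $U$-invariant subspace $\cf=\{f\in\ch:\phi f=f\ \text{a.e.}\}$.

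The scalar assertion I would prove by induction on $\deg P$. The base case $\deg P=1$, $P(n)=an$, follows from Theorem~\ref{tw4wstep} applied to the unitary $U^{a}$; equivalently, by~(\ref{wstep6}) one has $\phi_{P}(z)=p\,\text{-}\!\lim_{n}p\,\text{-}\!\lim_{m}z^{a(n+m)}=\phi_{P}(z)^{2}$, whence $\phi_{P}(z)\in\{0,1\}$. For the inductive step, apply~(\ref{wstep6}) again to write $\phi_{P}(z)=p\,\text{-}\!\lim_{n}p\,\text{-}\!\lim_{m}z^{P(n+m)}$, expand $P(n+m)=P(n)+P(m)+R(n,m)$ with the cross-term polynomial $R(n,m)=P(n+m)-P(n)-P(m)$ of positive degree in both variables, and use the inductive hypothesis to control the contribution of $R$, producing the identity $\phi_{P}(z)=\phi_{P}(z)^{2}$.

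The main obstacle will be closing this last step cleanly: although $R(n,\cdot)$ has degree at most $\deg P -1$ in $m$, the polynomial $P(m)+R(n,m)=P(n+m)-P(n)$ has the \emph{same} degree as $P$, so a naive induction on $\deg P$ does not directly reduce the problem. The standard remedy is a PET-style (polynomial exhaustion) induction on a finer invariant of $P$ (for example on the pair (degree, leading coefficient)), combined with repeated uses of~(\ref{wstep6}) to split the inner $m$-limit into iterated limits that decouple $P(m)$ from the cross-term $R(n,m)$. Once this scalar claim is in hand, the operator statement follows from the spectral decomposition, with $W$ realized as the orthogonal projection onto the closed $U$-invariant subspace $\cf\subset\ch$.
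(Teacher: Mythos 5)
Your reduction to a pointwise scalar statement breaks down at the ``dominated convergence'' step, and the breakdown is not repairable. For every $x$, the sequence $\xi(x)^{P(n)}$ lies in the unit circle, which is compact and closed; hence the pointwise ultrafilter limit $\phi(x)=p\,\text{-}\!\lim_{n\in\N}\xi(x)^{P(n)}$ has modulus $1$ everywhere (in fact, the very iterated-limit induction you describe, carried out in the group $\cir^1$ where every element is invertible and cross terms factor out, shows $\phi\equiv 1$). So ``multiplication by $\phi$'' is the identity operator, whereas $W$ can be $0$ (take $U$ weakly mixing and $p$ a minimal idempotent). The two are not in contradiction because the interchange $p\,\text{-}\!\lim_n\int\xi^{P(n)}f\bar g\,d\mu=\int\bigl(p\,\text{-}\!\lim_n\xi^{P(n)}\bigr)f\bar g\,d\mu$ is false: the dominated convergence theorem is a statement about ordinary sequential limits and does not apply to limits along a nonprincipal ultrafilter (moreover, a pointwise $p$-limit of measurable functions need not even be measurable). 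Consequently your claim that ``$W^2=W$ amounts to the pointwise assertion $\phi_P(z)\in\{0,1\}$'' is wrong, and the entire inductive scheme in the last two paragraphs is aimed at the wrong target. The opening reductions (that $W$ exists in the weak operator topology, commutes with $U$, and that a contractive idempotent is an orthogonal projection) are fine.

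The statement is quoted by the paper from Bergelson's survey, but the paper indicates how it follows from its own machinery, and that route shows what is really needed: one must work with $p$-limits taken directly in the weak operator topology on the unit ball of $\cb(\ch)$, which is a compact \emph{semitopological} semigroup. The identity $W=p\,\text{-}\!\lim_n\bigl(p\,\text{-}\!\lim_m U^{P(n+m)}\bigr)$ (Lemma~\ref{plimit-idemp}) together with $P(n+m)=P(n)+Q(n,m)+P(m)$ gives $W=W^2$ only when the inner limit of the cross term $U^{Q(n,m)}$ can be decoupled, and since multiplication is only separately continuous this requires either that the relevant lower-degree limits equal $Id$ (joint continuity at invertible elements, (\ref{add1}) and Lemma~\ref{idempotent}) or that they vanish, in which case the $p$-limit version of the van der Corput lemma (Lemma~\ref{vdC1}, as used in Lemma~\ref{vitaly}) forces $p\,\text{-}\!\lim_n U^{P(n)}=0$. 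The general case is then assembled by the orthogonal decomposition of $\ch$ according to the behaviour of lower-degree monomials (Lemma~\ref{pd1} and the induction in the proof of Theorem~A). Your proposed PET induction on the pair (degree, leading coefficient) does not substitute for this: the genuine dichotomy rigid/mixing on invariant subspaces, not a finer induction on the polynomial, is what closes the argument.
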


Theorem~\ref{tw5wstep} allows one to derive a polynomial Khintchine-like theorem.

\begin{Cor}\label{wn1wstep} Let $\xbm$ be a  probability space and $T:X\to X$ an invertible measure-preserving transformation. Given $P\in{\cal P}$, for
each $\vep>0$ and $A\in\cb$, the set
$$
R_{\vep}(A;P):=\{n\in\Z:\: \mu(A\cap T^{P(n)}A)\geq
(\mu(A))^2-\vep\}$$ is an ${\rm IP}^\ast$-set~\footnote{\label{stopka8} A set $S\subset\N$ is ${\rm IP}^\ast$ if $S\in p$ for any idempotent $p\in\beta\N$. The notation reflects the fact that $S$ is ${\rm IP}^\ast$ if and only if $S$ has a nontrivial intersection with any ${\rm IP}$-set
(see~(\ref{fs}) in the next section for the definition of ${\rm IP}$-set). The proof of Corollary~\ref{wn1wstep} then goes as
follows (cf.\ \cite{Be2}). We have $$ a:=p\,\text{-}\!\lim_{n\in\N}\mu(A\cap
T^{-P(n)}A)=\langle {\rm proj}_{\cal
F}\jed_A,\jed_A\rangle\geq\left(\langle {\rm proj}_{\cal
F}\jed_A,\jed\rangle\right)^2=(\mu(A))^2.$$ Therefore, for each
idempotent $p\in \beta\N$, $R_{\vep}(A;P)\cap\N\supset\{n\in\N:\:\left|\mu(A\cap T^{-P(n)}A)-a\right|<\vep\}$, whence $R_{\vep}(A;P)\cap\N\in p$. Thus $R_{\vep}(A;P)$ is ${\rm IP}^\ast$.

It is not hard to show that any ${\rm IP}^\ast$-set is syndetic. On the other hand, not every syndetic set
is ${\rm IP}^\ast$. (For example, $2\N+1$  is syndetic but not ${\rm IP}^\ast$.)
So, Corollary~\ref{wn1wstep} forms a non-trivial extension of Theorem~\ref{tw2wstep} in more than one respect.}.\end{Cor}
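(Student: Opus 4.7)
The plan is to deduce Corollary~\ref{wn1wstep} from Theorem~\ref{tw5wstep} applied to the Koopman operator $U_T$ on $\ch = L^2(X,\mu)$, with $f := \un_A$. First, I would fix an arbitrary idempotent ultrafilter $p\in\beta\N$ and apply Theorem~\ref{tw5wstep} to obtain that
\[
p\,\text{-}\!\lim_{n\in\N} U_T^{P(n)} = {\rm proj}_{\cf},
\]
where $\cf\subset \ch$ is a closed, $U_T$-invariant subspace. Pairing both sides with $\un_A$ in the weak operator topology and using the identity $\langle U_T^{P(n)}\un_A,\un_A\rangle = \mu(A\cap T^{-P(n)}A)$, I would obtain
\[
a := p\,\text{-}\!\lim_{n\in\N} \mu(A\cap T^{-P(n)}A) = \langle {\rm proj}_{\cf}\un_A, \un_A\rangle.
\]

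The next step is to show $a\geq \mu(A)^2$. Since ${\rm proj}_{\cf}$ is a self-adjoint idempotent, $\langle {\rm proj}_{\cf}\un_A, \un_A\rangle = \|{\rm proj}_{\cf}\un_A\|^2$. The constant function $\un$ is $U_T$-invariant, hence a $p$-rigid vector, so $\un\in\cf$; therefore the Cauchy--Schwarz inequality gives
\[
\|{\rm proj}_{\cf}\un_A\|^2 \cdot \|\un\|^2 \geq \langle {\rm proj}_{\cf}\un_A, \un\rangle^2 = \langle \un_A, {\rm proj}_{\cf}\un\rangle^2 = \langle \un_A, \un\rangle^2 = \mu(A)^2,
\]
and since $\|\un\|^2=1$, we conclude $a\geq \mu(A)^2$.

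With $a\geq \mu(A)^2$ in hand, the definition of $p$-lim directly yields that, for every $\vep>0$,
\[
\{n\in\N : |\mu(A\cap T^{-P(n)}A) - a|<\vep\} \in p,
\]
and this set is contained in $R_{\vep}(A;P)\cap\N$, hence $R_{\vep}(A;P)\cap\N\in p$. Since $p$ was an arbitrary idempotent in $\beta\N$, the set $R_\vep(A;P)$ meets every member of every idempotent ultrafilter, i.e.\ $R_\vep(A;P)$ is ${\rm IP}^\ast$, as desired (replacing $P(x)$ by $P(-x)$ if one prefers to work with $T^{P(n)}$ rather than $T^{-P(n)}$).

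The only non-routine ingredient is the inclusion $\un\in\cf$, which depends on the structure of $\cf$ guaranteed by Theorem~\ref{tw5wstep}; once this is noted, the Cauchy--Schwarz step is essentially the same computation as in Hopf's proof of Khintchine's theorem recalled earlier, but with Ces\`aro averages replaced by $p$-limits. I anticipate no substantial obstacle, as the polynomial character of $P$ is entirely absorbed into Theorem~\ref{tw5wstep}.
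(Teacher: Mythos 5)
Your proposal is correct and follows essentially the same route as the paper's own proof (given in the footnote to the corollary): apply Theorem~\ref{tw5wstep} to the Koopman operator, identify $a=\langle{\rm proj}_{\cal F}\jed_A,\jed_A\rangle$, bound it below by $\mu(A)^2$ via self-adjointness of the projection, the fact that ${\rm proj}_{\cal F}\jed=\jed$, and Cauchy--Schwarz, and then conclude from the definition of $p\,$-limit that $R_\vep(A;P)\cap\N\in p$ for every idempotent $p$. You merely spell out the Cauchy--Schwarz step and the inclusion $\jed\in{\cal F}$ that the paper leaves implicit.
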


For a more general form of the polynomial Khintchine theorem and some combinatorial applications, see~\cite{Be-Fu-M}.

Theorem~\ref{tw5wstep} and Corollary~\ref{wn1wstep} indicate that ergodic theorems along idempotent ultrafilters
can be useful for ergodic-theoretical and combinatorial applications. But, as a matter of fact, studying the limits of
the form $p\,\text{-}\!\lim_{n\in\N} U^{P(n)}$ can also allow one to better
understand the intricate properties of the unitary operators
acting on a Hilbert space $\cal H$. This is of a special interest in  case
when $U$ has continuous spectrum. To continue the line of
juxtaposition of Ces\`aro limits with limits along ultrafilters, notice that if
$U\in{\cal U}({\cal H})$ is weakly mixing then for any
non-constant polynomial $P\in\Z[x]$ one has \cite{Fu} the strong limit
$$\lim_{N-M\to\infty}\frac1{N-M}\sum_{n=M}^{N-1}U^{P(n)}=0.$$
Moreover, as shown in \cite{Be87}, once $\ch=\ch_{wm}$, we also have
\beq\label{eq111}\lim_{N-M\to\infty}\frac1{N-M}\sum_{n=M}^{N-1}|\langle U^{P(n)}f,g\rangle|=0\eeq
for each $f,g\in\ch$ and each non-zero degree polynomial $P\in\Z[x]$. Note also that if~(\ref{eq111}) holds for some non-zero degree polynomial $Q\in\Z[x]$ then $U$ must be weakly mixing and hence~(\ref{eq111}) holds for all non-zero degree polynomials $P\in\Z[x]$.

While the fact expressed by the formula~(\ref{eq111}) forms an important ingredient in the proofs of various polynomial recurrence theorems such as Theorem~\ref{tw3wstep} and its far reaching extension, the polynomial Szemeredi theorem proved in \cite{BL}, the Ces\`aro limits fail to discern the more subtle behaviour of weakly mixing operators along the idempotent ultrafilters~\footnote{As it was mentioned in footnote~\ref{stopka8} (and will be stressed many more times in the sequel), $p\,-$limits of various ergodic expressions are intrinsically connected with the behaviour of these expressions along IP-sets, which, in turn, are connected with important applications of ergodic theory to combinatorics (see \cite{Be2}, \cite{Be-Fu-M}, \cite{BM}, \cite{FK}).}.

It is not hard to see that $U\in\cu(\ch)$ is weakly mixing if and only if for some idempotent $p\in\beta\N$, $p\,\text{-}\!\lim_{n\in\N}  U^n=0$. However, unlike the situation with Ces\`aro limits described by formula~(\ref{eq111}), the relation  $p\,\text{-}\!\lim_{n\in\N}  U^n=0$ does not imply, in general, neither  $p\,\text{-}\!\lim_{n\in\N} U^{2n}=0$, nor, say,  $p\,\text{-}\!\lim_{n\in\N}  U^{n^2}=0$. On the other hand, one can show that  $p\,\text{-}\!\lim_{n\in\N}  U^n=Id$ implies  $p\,\text{-}\!\lim_{n\in\N}  U^{kn}=Id$, for any $k\in\N$, and this is consistent (depending on a choice of $U$) with both  $p\,\text{-}\!\lim_{n\in\N}  U^{n^2}=0$ and
$p\,\text{-}\!\lim_{n\in\N}  U^{n^2}=Id$. It turns out (see Corollary~F below) that for any independent family $\{P_1,\ldots,P_m\}\subset \cp$~\footnote{Independence here means that if for some $n_1,\ldots,n_m\in\Z$, $\sum_{i=1}^mn_iP_i=0$, then $n_i=0$ for each $i=1,\ldots,m$.} and any choice of $E_i\in\{0,Id\}$, $i=1,\ldots, m$, there exist $U\in\cu(\ch)$ and an idempotent $p\in\beta\N$ such that $p\,\text{-}\!\lim_{n\in\N} U^{P_i(n)}=E_i$ for each $i=1,\ldots,m$.

Let us fix $U\in\cu(\ch)$ and an idempotent $p\in\beta\N$. It is not hard to show that $$\mbox{$\{P\in\cp:\: p\,\text{-}\!\lim_{n\in\N} U^{P(n)}=Id\}$ is an additive subgroup of $\cp$.}$$ One of the main results of this paper (Theorem~E below) gives a complete characterization of this kind of groups\footnote{This characterization problem is interesting only for unitary operators which are weakly mixing. Indeed, if $U\in{\cal U}({\cal H})$ has discrete
spectrum (that is, the space $\ch$ is spanned by the eigenvectors of $U$), we have $p\,\text{-}\!\lim_{n\in \N}U^{P(n)}=Id$ for each $P\in
{\cal P}$ and each $p\in \beta\N$, $p=p+p$, see
Proposition~\ref{choj9}.}, hereby contributing to the recently revived studies of the phenomenon of rigidity for weakly mixing  operators
(see  \cite{Aa-Ho-Le}, \cite{Ad}, \cite{Be-Ju-Le-Ro}, \cite {Gr},  \cite{Gr-Ei}).

Not all idempotents and not all weakly mixing unitary operators are interesting when we study such $p\,$-limits. Indeed, if $p\in \beta\N$ is a minimal
idempotent~\footnote{An idempotent $p\in \beta\N$ is said to be
{\em minimal} if it belongs to a minimal right ideal of
$(\beta\N,+)$. $U$ is weakly mixing if and only if $p\,\text{-}\!\lim_{n\in\N}  U^n=0$ for each minimal idempotent $p\in\beta\N$ \cite{Be1}. See \cite{Be1}, \cite{Be2} for the discussion of minimal idempotents and their applications to dynamics and combinatorics.} and $U$ is weakly mixing then
$p\,\text{-}\!\lim_{n\in\N} U^{kn}=0$ for each $k\geq1$ \cite{Be1}. Moreover, by Corollary~B below, we will obtain  $p\,\text{-}\!\lim_{n\in\N} U^{P(n)}=0$ for each positive
degree polynomial $P\in{\cal P}$. We should also notice that if
$U$ is {\em mildly mixing} \cite{Fu-We}, \cite{Sch},
i.e.\ when $U$ has no non-trivial rigid vectors, then
 $p\,\text{-}\!\lim_{n\in\N} U^{P(n)}=0$ for each positive
degree polynomial $P\in{\cal P}$ and every idempotent $p\in \beta\N$~\footnote{The latter is not surprising. As we have already noticed,
$p\,\text{-}\!\lim_{n\in\N}U^n={\rm proj}_{\ch_r}$.}.
Therefore the problem of calculating $p$-limits  along polynomial powers of $U$
becomes interesting if $U$ has non-trivial rigid vectors, in particular, if $U$ itself is rigid.

We now pass to a description of main results of the paper.

Given $n\in\N$, let ${\cal P}_{\leq N}$~\footnote{Similarly,
${\cal P}_{\geq N}$ denotes the set of  polynomials $P\in{\cal
P}$ satisfying deg$\,P\geq N$.} denote the (additive) group of all
polynomials $P\in\Z[x]$ with deg$\,P\leq N$ satisfying $P(0)=0$.
Our first result (proved in Section~\ref{poldec})  provides the following Hilbert space decomposition theorem for polynomial actions of unitary operators.

\vspace{2ex}

{\bf Theorem~A.} \ {\em For each
$N\geq1$, each idempotent $p\in \beta\N$ and each $U\in{\cal U}({\cal H})$ there exists a unique
decomposition \beq\label{dcm} {\cal H}=\bigoplus_{k\geq1} {\cal
H}^{(N)}_k\eeq into $U$-invariant closed subspaces such that for
each $P\in{\cal P}_{\leq N}$ and $k\geq1$ we have \beq\label{qw1}
p\,\text{-}\!\lim_{n\in\N}\left(U|_{{\cal
H}_k^{(N)}}\right)^{P(n)}=0\;\mbox{or}\;Id\eeq and, moreover,
\beq\label{qw2}\begin{array}{l} \mbox{whenever $k\neq l$,
there exists $Q\in{\cal P}_{\leq N}$}\\
\mbox{such that}\;\;p\,\text{-}\!\lim_{n\in\N}\left(U|_{{\cal
H}_k^{(N)}}\right)^{Q(n)} \neq p\,\text{-}\!\lim_{n\in\N}\left(U|_{{\cal
H}_l^{(N)}}\right)^{Q(n)}.\end{array} \eeq

Furthermore, the decomposition~(\ref{dcm}) has the following
property:
\beq\label{qw3}\begin{array}{l}
\mbox{For any $k\geq1$, if
$Q\in{\cal P}_{\leq N}$ is such that}\\
\mbox{$p\,\text{-}\!\lim_{n\in\N}\left(U|_{{\cal
H}_k^{(N)}}\right)^{sQ(n)}=0$
for each $s\in\N$}, then\\
\mbox{$p\,\text{-}\!\lim_{n\in\N}\left(U|_{{\cal
H}_k^{(N)}}\right)^{R(n)}=0$ for each $R\in{\cal P}_{\geq {\rm deg}\,Q
}$.}\end{array} \eeq}

\vspace{2ex}

\noindent
An important consequence of Theorem~A is the following result.

\vspace{2ex}

{\bf Corollary~B.} {\em Assume that $P\in \Z[x]$, $P(0)=0$ and deg$\,P=N\geq1$. Assume moreover that $p\,\text{-}\!\lim_{n\in\N}
U^{lP(n)}=0\;\;\mbox{for all}\;\;l\geq1$. Then
$p\,\text{-}\!\lim_{n\in\N}U^{Q(n)}=0$ for all $Q\in{\cal P}_{\geq N}$~\footnote{Note that in view of Theorem~E below, in general, this assertion fails if we only assume that $p\,\text{-}\!\lim_{n\in\N} U^{lP(n)}=0$ for all $1\leq l\leq L_0$. Indeed, making use of Lemma~\ref{sta4} below,  we can extend the cyclic group $H$ generated by $(L_0+1)P$ to a subgroup $G\subset\cp_{\leq N}$ (with $N={\rm deg}\,P$) of finite index in $\cp_{\leq N}$ so that $lP\notin G$ for $l=1,\ldots,L_0$.}.
}

\vspace{2ex}

Given a finite, positive Borel measure $\sigma$ on the circle $\bs^1$, we denote by $\widehat{\sigma}(n)$  its $n$-th Fourier coefficient: $\widehat{\sigma}(n):=\int_
{\bs^1}z^n\,d\sigma(z)$.
The decomposition result given in Theorem~A turns out to depend only on the maximal
spectral type of $U$. Therefore, it yields a  decomposition of any finite, positive Borel
measure on the circle:

\vspace{2ex}

{\bf Corollary~C.} \ {\em Assume that $\sigma$ is a probability
Borel measure on $\cir^1$. Let $N\in\N$ and $p\in \beta\N$, $p+p=p$.
Then there exists a unique decomposition
\beq\label{choj5}\sigma=\sum_{k\geq1}a_k\sigma^{(N)}_k\eeq such
that each $a_k>0$, $\sum_{k\geq1}a_k=1$, each $\sigma^{(N)}_k$ is also a probability Borel measure on
$\cir^1$, $\sigma_k^{(N)}\perp\sigma_l^{(N)}$ whenever $k\neq l$,
and, moreover, for
each $Q\in\Z[x]$ of degree at
most $N$ and $k\geq1$
\beq\label{choj6}\begin{array}{l}
p\,\text{-}\!\lim_{n\in\N}\widehat{\sigma}^{(N)}_k(Q(n))=0\;\;\mbox{or}\\
p\,\text{-}\!\lim_{n\in\N}\widehat{\sigma}^{(N)}_k(Q(n)-Q(0))=1.\end{array}\eeq
\beq\label{choj7}\begin{array}{l}\mbox{If $k\neq l$ then there
exists $Q\in\Z[x]$ of degree at most~$N$}\\ \mbox{such that}\;\;
p\,\text{-}\!\lim_{n\in\N}\widehat{\sigma}^{(N)}_k(Q(n))=0\;\;\mbox{and}\\
p\,\text{-}\!\lim_{n\in\N}\widehat{\sigma}^{(N)}_l(Q(n)-Q(0))=1\;\;\mbox{or
vice versa.}\end{array}\eeq}

\vspace{2ex}

Corollary~C is complemented by the following result (which can be viewed as another form of Corollary~B).

\vspace{2ex}

{\bf Corollary~D.} \
{\em Assume that $\sigma$ is a continuous probability Borel measure on
$\cir^1$ and let $p\in \beta\N$, $p+p=p$.\\
(i) If $p\,\text{-}\!\lim_{n\in\N}\widehat{\sigma}(ln+k)=0$ for each
$l\geq1$ and $k\in\Z$, then
$$p\,\text{-}\!\lim_{n\in\N}\widehat{\sigma}(Q(n))=0$$ for each positive
degree  polynomial $Q\in\Z[x]$.\\
(ii) If, for some $P\in{\cal P}_{\leq N}$, we have
$p\,\text{-}\!\lim_{n\in\N}\widehat{\sigma}(lP(n)+k)=0$ for each $l\geq1$
and $k\in\Z$, then $$p\,\text{-}\!\lim_{n\in\N}\widehat{\sigma}(Q(n))=0$$
for each $Q\in\Z[x]$ of degree not smaller than the degree of $P$.
}

\vspace{2ex}

Motivated by the decomposition result given by Theorem~A, we introduce the notion of
rigidity group. Let $N\in\N$. A subgroup $G\subset {\cal P}_{\leq N}$ is
called an $N$-{\em rigidity group} if there exist  $p\in
\beta\N$, $p+p=p$, and $U\in{\cal U}({\cal H})$  such that
$$
G=\{P\in{\cal P}_{\leq N}:\: p\,\text{-}\!\lim_{n\in\N} U^{P(n)}=Id\}$$
and $p\,\text{-}\!\lim_{n\in\N} U^{Q(n)}=0$ for each $Q\in{\cal P}_{\leq
N}\setminus G$. The second main goal of the paper is to prove the following result.

\vspace{2ex}

{\bf Theorem~E.} \ {\em
Assume that $G\subset {\cal P}_{\leq N}$ is a subgroup with
$\max\{{\rm deg}\,P:\: P\in G\}=N$. Then  $G$ is an $N$-rigidity group if and only
if $G$ has finite index in ${\cal P}_{\leq N}$.}

\vspace{2ex}

Our strategy to prove Theorem~E will be first to introduce in Section~\ref{rigiditygroups} the concept of $N$-{\em periodic} rigidity groups associated with  $G$ - these are subgroups of $\Z/k_1\Z\oplus\ldots\oplus\Z/k_N\Z$, where $k_j\geq1$ is the smallest natural number $k$ such that $kx^j\in G$, $j=1,\ldots,N$
($N$-rigidity groups will turn out to be the preimages of
$N$-periodic rigidity groups via the natural map $\Z^N\to
\Z/k_1\Z\oplus\ldots\oplus\Z/k_N\Z$)~\footnote{The numbers $k_1,\ldots,k_N$ are determined by the
fact that we consider $\{x,x^2,\ldots,x^N\}$ as the basis of
${\cal P}_{\leq N}$; if we choose a different basis in ${\cal P}_{\leq N}$,
say $Q_1,\ldots,Q_N$, and replace $x^j$ by $Q_j$ we obtain another sequence of
periods: $l_1,\ldots,l_N$, and a different $N$-periodic subgroup
$\ov{G}\subset \Z/l_1\Z\oplus\ldots\oplus\Z/l_N\Z$
satisfying the~$(\ast)$-property (see the definition of this property below), but $G$ will still be equal to
the preimage of $\ov{G}$ via the map $\Z^N\to
\Z/l_1\Z\oplus\ldots\oplus\Z/l_N\Z$, see
Section~\ref{rigiditygroups}.}. The fact that $k_j\geq1$ is well defined for
each $j=1,\ldots,N$ is not obvious and it is a consequence of the
decomposition theorem (Theorem~A)\footnote{The
decomposition theorem,  in turn, heavily depends on the $p$-limit
version of the classical van der Corput lemma, see
Lemma~\ref{vdC1} below.}. We then describe $N$-periodic rigidity
groups as duals of quotients of
$\Z/k_1\Z\times\ldots\times\Z/k_N\Z$ by the so called group couplings~\footnote{A subgroup $K\subset \Z/k_1\Z\times\ldots\times\Z/k_N\Z$
is called a {\em group coupling} if it has full projection on each
coordinate (cf.\ the notion of joining in ergodic theory).}. This will allow us to give a complete classification
of $N$-periodic rigidity groups as  groups
$\widetilde{G}\subset \Z/k_1\Z\oplus\ldots\oplus\Z/k_N\Z$ which
fulfill the following~$(\ast)$-property: For each $r=1,\ldots,N$
$$
(\ast)\;\;\;\;\;\;\;\left.\begin{array}{l}
(j_1,\ldots,j_{r-1},j_r,j_{r+1},\ldots,j_N)\in \widetilde{G}\\
(j_1,\ldots,j_{r-1},j'_r,j_{r+1},\ldots,j_N)\in
\widetilde{G}\end{array}\right\}\;\Longrightarrow\;
j_r=j'_r~\footnote{That is, $\widetilde{G}$ contains no nonzero
element of the form $(0,\ldots,0,j_r,0,\ldots,0)$.}.$$ In
particular, to prove that a group $\widetilde{G}$ satisfying
the~($\ast$)-property is an $N$-periodic rigidity group, we will
construct a {\em weighted unitary operator} $U=V^T_{\Theta\circ
\Phi}$ over an odometer $T$, where $\Phi$ is a cocycle taking values
``up to a limit distribution'' in a group coupling which will turn
out to be the dual of $\widetilde{G}$ and $\Theta$ is a character
of $\Z/k_1\Z\ot\ldots\ot\Z/k_N\Z$. For some $p\in \beta\N$, $p=p+p$, the
$N$-periodic rigidity group of $V_{\Theta\circ\Phi}^T$ will be
isomorphic to $\widetilde{G}$. The proof of Theorem~E will then
follow from some further algebraic considerations.

The following corollary of Theorem~E confirms the possibility of independent behavior of $p$-limits for
finite families of independent polynomials.

\vspace{2ex}

{\bf Corollary F.} \ {\em Assume that $P_1,\ldots,P_N\in {\cal P}_{\leq N}$ are independent. Then, for any
$s=1,\ldots,N$, there exists an $N$-rigidity subgroup $G$ containing
$P_1,\ldots,P_s$  such that $P_{s+1},\ldots,P_N\notin G$. That is, there exist $U\in\cu(\ch)$ and $p\in\beta\N$, $p+p=p$, such that $p\,\text{-}\!\lim_{n\in\N}U^{P_i(n)}=Id$ for $i=1,\ldots,s$ and $p\,\text{-}\!\lim_{n\in\N}U^{P_i(n)}=0$ for $i=s+1,\ldots,N$.}

\vspace{2ex}

Finally, we will consider groups of {\em
global rigidity}, that is, groups of the form $
G=\{P\in{\cal P}_{\leq N}:\: p\,\text{-}\!\lim_{n\in\N} U^{P(n)}=Id\}$
for some idempotent  $p\in \beta\N$ and some $U\in {\cal U}({\cal
H})$. The following result will be proved in Section~\ref{rigiditygroups}.

\vspace{2ex}

{\bf Theorem~G.} \ {\em Any subgroup $G$  of
${\cal P}_{\leq N}$ is a group of global rigidity, that
is, given a subgroup $G\subset{\cal P}_{\leq N}$,  there exist $U\in{\cal U}({\cal H})$ and $p\in \beta\N$, $p+p=p$,
such that
$$
G=\{P\in{\cal P}_{\leq N}:\: p\,\text{-}\!\lim_{n\in\N}
U^{P(n)}=Id\}.$$}

\vspace{1ex}

The structure of the paper is as follows. In Section~\ref{section:p-limits} we collect some necessary facts and provide additional details on $\beta\N$ and $p$-limits. In Section~\ref{poldec} we prove Theorem~A and derive  Corollaries~B,~C and~D. In Section~\ref{rigiditygroups} we characterize the class of groups $G\subset\cp_{\leq N}$ satisfying the~($\ast)$ property in terms of $N$-periodic rigidity, or, equivalently,  in terms of algebraic couplings (see Theorem~\ref{p-group-main}). The main part of Section~\ref{rigiditygroups} is devoted to the proof of Theorem~E. Section~\ref{rigiditygroups} will also contain the proofs of Corollary~F which is a consequence of Theorem~E, and of
Theorem~G which follows from Theorems~A and~E.

\section{Ultrafilters, $p$-convergence and
IP-limits}\label{section:p-limits} \subsection{Idempotent ultrafilters and Hindman's theorem} In the previous section we have already introduced $\beta\N$, the space of ultrafilters on $\N$.
In this section we will provide some additional discussion of results pertaining to ultrafilters which will be needed in the subsequent sections. The reader can find the missing details in \cite{Be}, \cite{Be1} and
\cite{Be2}.

As we have already mentioned, given $p\in\beta\N$, the map $q\mapsto p+q$ is continuous. In
other words, $(\beta\N,+)$ is a left topological compact
Hausdorff semigroup. We also remark that the right translations
are continuous only at principal ultrafilters.

In a general setting, when $S$ is a left topological compact
Hausdorff semigroup, the set of invertible
elements of $S$ is denoted by $I(S)$, and  the set of idempotents of $S$ is denoted by $E(S)$ (recall that $e\in
E(S)$ means that $e\cdot e=e$). According to Ellis' lemma
\cite{El}, under our assumptions on $S$, the set $E(S)$ is always
non-empty. If $I(S)\neq\emptyset$ then $I(S)\cap E(S)$ is a
singleton, namely the unit of $S$. When $S=\beta\N$, by $E(\beta\N)$ we will always mean
the set of idempotents of $(\beta\N,+)$.

Fix $k\in \N$ and $p\in E(\beta\N)$. Since
$\bigcup_{i=0}^{k-1}(k\N+i)=\N$ is a disjoint union, there is a unique $i$,
$0\leq i<k$, such that $k\N+i\in p$. Since $p+p=p$,
$$
B:=\{n\in\N:\:(k\N+i)-n\in p\}\in p.$$ It follows that
$(k\N+i)\cap B\neq\emptyset$. Take $n\in (k\N+i)\cap B$. Then for
some $r\in\N$, $n=kr+i$ and also $(k\N+i)-n\in p$. It follows
immediately that $k\N\in p$. This gives us the following useful fact: \beq\label{u1}
\mbox{If $p\in E(\beta\N)$ then $k\N\in p$ for each $k\in\N$.}\eeq

Let  $p\in E(\beta\N)$. By \cite{Hi}, Theorem 3.1 and \cite{Hia}, Theorem 3.3., each $A\in p$ must contain a set of the form
\beq\label{fs}
{\rm FS}((n_i)_{i\geq1}):=\{n_{i_1}+n_{i_2}+\ldots+n_{i_k}:\:
i_1<i_2<\ldots<i_k,k\geq1\}\subset A.~\footnote{In \cite{Hi}, Hindman proved that for any finite partition $\N=\bigcup_{i=1}^r C_i$, one of $C_i$ contains an ${\rm IP}$-set. The fact that any member of an idempotent ultrafilter has to contain an ${\rm IP}$-set (Hindman's theorem) follows from \cite{Hia}, Theorem 3.3.}\eeq In ergodic theory and
topological dynamics the sets of the form ${\rm FS}((n_i)_{i\geq1})$ are
called IP-{\em sets}. It follows that any $p$-large
set contains an IP-set.~\footnote{It is perhaps worth mentioning that the result~(\ref{u1})
immediately follows from Hindman's theorem. Indeed, among the sets $k\N+i$, $0\leq i\leq k-1$, only $k\N$ can contain an IP-set.}
On the other hand, given a sequence $(n_i)_{i\geq1}$ one can find
$p\in E(\beta\N)$ for which \beq\label{fs007}{\rm FS}(n_i)_{i\geq K}\in p\;\;\mbox{for each}\;\;K\geq1,\eeq see e.g.\ \cite{Be2}, Theorem~2.5.

\subsection{$p\,$- and IP-limits}
Assume that $X$ is a compact metric space with a metric $d$. Given
$p\in\beta\N$,  $(x_n)\subset X$ and $x\in X$ we define
$$p\,\text{-}\!\lim_{n\in\N}x_n=x$$
if for each open $U\ni x$ the set $\{n\in\N:\: x_n\in U\}$ is
$p$-large. Since $X$ is compact Hausdorff,  a $p$-limit exists and
 is unique\footnote{\label{uczech} By the universality property
of the Stone-$\check{\mbox{C}}$ech-compactification, if $X$
is a compact Hausdorff space and
$\underline{x}=(x_n)_{n\in\N}:\N\to X$, then there exists a unique
continuous extension $\beta \underline{x}:\beta\N\to X$ of
$\underline{x}$. The value of $\beta\underline{x}$ at $p\in\beta\N$ is given by the
limit of the ultrafilter $\{A\subset X:\: \underline{x}^{-1}(A)\in p\}$ of subsets of $X$.
Then $p\,\text{-}\!\lim_{n\in\N}
x_n=\beta\underline{x}(p)$.}.

Now, if $Y$ is another compact metric space and $f:X\to Y$ is
continuous then \beq\label{funkcjeip}p\,\text{-}\!\lim_{n\in\N}f(x_n)=f
\left(p\,\text{-}\!\lim_{n\in\N}x_n\right).\eeq

In view of~(\ref{u1}), whenever $p$ is an idempotent and $k\geq1$,
$$
\{n\in\N:\: x_n\in U\}\in p \Leftrightarrow \{n\in\N:\: x_n\in
U\}\cap k\N\in p
$$
$$
\Leftrightarrow \{n\in\N:\:k|n\;\mbox{and}\;\; x_n\in U\}\in p.$$
It follows that to check that $x\in X$ is the $p$-limit of a sequence $(x_n)$, where $p\in E(\beta\N)$, it is enough
to deal with numbers which are multiples of a fixed $k\geq1$. We
write this as \beq\label{u2}
p\,\text{-}\!\lim_{n\in\N}x_n=p\,\text{-}\!\lim_{k|n}x_n.\eeq

\vspace{2ex}

\begin{Remark}  We would like to stress that in general
$p\,\text{-}\!\lim_{n\in\N}x_n\neq p\,\text{-}\!\lim_{n\in\N} x_{kn}$. For example, we do not have equality for $k>1$ when  $x_n=U^n$, where
$U$ is a unitary operator on a Hilbert space  $\ch$ and $X$ is the unit ball of the space of linear bounded operators on $\ch$ equipped with the weak operator topology,
see Theorem~E (the group $G:=\{mkx:\:m\in\Z\}\subset\cp_1$ is of finite index in $\cp_1$, hence $G$ is a 1-rigidity group and $x\notin G$).
In view of~(\ref{u2}), it follows that it is not true, in general, that
$p\,\text{-}\!\lim_{k|n}x_n=p\,\text{-}\!\lim_{n\in\N} x_{kn}$.\end{Remark}

\vspace{2ex}

We  now introduce a related notion of convergence, namely that of
IP-convergence. The precise connection between these two notions is given
by Lemma~\ref{plimit} below.

Let $\cf$ denote the family of finite non-empty subsets of $\N$.
Given an increasing sequence $(n_i)$ of natural numbers,  for each
$\alpha\in\cf$ we set
$$
n_\alpha=\sum_{i\in\alpha}n_i.$$ Assume that $(x_n)\subset X$ and
$x\in X$. Assume moreover that for each $\vep>0$ there exists
$N\geq1$ such that for each $\alpha\in\cf$ satisfying
$\min\alpha\geq N$ we have
$$
d(x_{n_\alpha},x)<\vep.$$ Then we say that $x$ {\em is the
IP-limit  of}
$(x_{n_\alpha})_{\alpha\in\cf}$ and  write
$${\rm IP}\,\text{-}\,\lim x_{n_\alpha}=x.~\footnote{\label{ordinaryc} Note that a necessary (but, in general, not sufficient) condition
for the validity of ${\rm IP}\,\text{-}\,\lim x_{n_\alpha}=x$ is that $x_{n_i}\to x$ when $i\to\infty$.}$$

\begin{Lemma}\label{plimit}
(i) Assume that $(n_i)$ is an increasing sequence of natural
numbers. Then there exists $p\in E(\beta\N)$ such that
for each compact metric space $(X,d)$ and a sequence $(x_n)_{n\geq1}\subset X$ such that ${\rm IP}\,\text{-}\!\lim
x_{n_\alpha}=x$, we have $x=p\,\text{-}\!\lim_{n\in
\N}x_n$.

(ii) Let  $(X,d)$ be a compact metric space and $(x_n)_{n\geq1}\subset X$. Assume that $p\in E(\beta\N)$ and $p\,\text{-}\!\lim_{n\in\N} x_n=x$.
Then there exists an ${\rm IP}$-set ${\rm FS}((n_i)_{i\geq1})$ such that ${\rm
IP}\,\text{-}\!\lim x_{n_\alpha}=x$.
\end{Lemma}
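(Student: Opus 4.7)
For part~(i), the plan is to invoke directly the existence fact~(\ref{fs007}): given the prescribed increasing sequence $(n_i)$, there exists an idempotent $p\in E(\beta\N)$ such that ${\rm FS}((n_i)_{i\geq K})\in p$ for every $K\geq 1$. I would then verify that this $p$ works. Fix a compact metric space $(X,d)$ and a sequence $(x_n)\subset X$ with ${\rm IP}\,\text{-}\!\lim x_{n_\alpha}=x$. For each open $U\ni x$, the IP-limit hypothesis supplies $N$ such that $x_{n_\alpha}\in U$ whenever $\min\alpha\geq N$, i.e.\ ${\rm FS}((n_i)_{i\geq N})\subset\{n\in\N:\: x_n\in U\}$. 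Since the former set is in $p$, so is the latter, proving $p\,\text{-}\!\lim_{n\in\N}x_n=x$.

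For part~(ii), I would use the metrizability of $X$ to choose a decreasing base $U_1\supset U_2\supset\cdots$ of open neighborhoods of $x$ with diameters tending to zero, and set $A_k:=\{n\in\N:\: x_n\in U_k\}$. These form a decreasing sequence of members of $p$. The goal is to construct inductively an increasing sequence $n_1<n_2<\cdots$ such that ${\rm FS}((n_i)_{i\geq k})\subset A_k$ for every $k\geq 1$; this at once yields ${\rm IP}\,\text{-}\!\lim x_{n_\alpha}=x$, because the $U_k$ shrink to $\{x\}$.

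The engine of the induction is the following standard consequence of $p=p+p$: for any $A\in p$ the set $A^\star:=\{n\in A:\: A-n\in p\}$ belongs to $p$, and moreover whenever $n\in A^\star$ one has $A^\star-n\in p$. (The latter is checked by rewriting $A^\star-n=(A-n)\cap\{m\in\N:\: (A-n)-m\in p\}$ and using idempotency of $p$.) Define $A_k^\star$ in this way. Having picked $n_1<\cdots<n_{k-1}$ with the property that for every non-empty $\alpha\subset\{1,\ldots,k-1\}$ one has $n_\alpha\in A_{\min\alpha}^\star$, one selects $n_k$ inside the intersection of $A_k^\star$, the cofinite set $\{m>n_{k-1}\}$, and the finitely many translates $A_{\min\alpha}^\star-n_\alpha$ taken over non-empty $\alpha\subset\{1,\ldots,k-1\}$; by the self-translation property of the stars this intersection is $p$-large, hence non-empty, and the inductive hypothesis persists at the next step.

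The main obstacle is not the formal induction but the algebraic bookkeeping in part~(ii): arranging a nested family $A_k^\star$ that is simultaneously $p$-large and closed under translation by its own members (modulo a $p$-large correction). Once this is in place the construction runs mechanically, and the resulting $(n_i)$ yields the desired IP-set.
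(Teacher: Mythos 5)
Your proposal is correct and takes essentially the same route as the paper: part (i) is verbatim the paper's argument via~(\ref{fs007}), and part (ii) is the same greedy idempotency-driven construction of the IP-set. The only difference is bookkeeping in (ii) --- the paper folds all the required translates into nested sets $B_k:=A_k\cap(B_{k-1}-n_{k-1})\cap\ldots\cap(B_1-n_1)$ and picks $n_k\in B_k$ with $B_k-n_k\in p$, whereas you carry the Galvin--Glazer star sets $A_k^\star$ and the finitely many translates $A^\star_{\min\alpha}-n_\alpha$ explicitly; both versions are sound.
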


\begin{proof} (i) \ In view of~(\ref{fs007}), there exists
$p\in E(\beta\N)$ such that  ${\rm FS}((n_i)_{i\geq K})\in p$ for each $K\geq1$. It follows that for each $\alpha_0\in\cf$, $
\{n_\alpha:\: \max\alpha_0\leq\min\alpha\}\in p$. Now, by
assumption, given $\vep>0$ there exists $\alpha_0\in\cf$ such that
$$
\{n\in\N:d(x_n,x)<\vep\}\supset \{n_\alpha:\:
\max\alpha_0\leq\min\alpha\}\in p$$ and therefore $\{n\in\N:\:
d(x_n,x)<\vep\}\in p$.

(ii) \ Denote $A_k=\{n\in\N:\:d(x_n,x)<\frac1k\}$. We have $A_k\in
p$ for all $k\geq1$. Since $p+p=p$ and each set $A_k$ is infinite (by~(\ref{fs})), we can choose $n_1<n_2<\ldots$ such that
$n_k\in B_k, B_k-n_k\in p$ for each $k\geq 1$, where
$$
B_1:=A_1\;\;\mbox{and}\;\; B_k:=A_k\cap(B_{k-1}-n_{k-1})\cap\ldots\cap(B_1-n_1)\;\;\mbox{for each}\;\;k\geq2.$$ It is now clear that for the IP-set $FS((n_i))$ we have ${\rm IP}\,\text{-}\!\lim x_{n_\alpha}=x$.
\end{proof}

We will also need the following fact.

\begin{Lemma}[see \cite{Be}, Theorem 3.8]\label{plimit-idemp}For each
$p,q\in\beta\N$ and $(x_n)\subset X$
$$
(p+q)\,\text{-}\!\lim_{n\in \N}x_n=q\,\text{-}\!\lim_{k\in\N}\left(p\,\text{-}\!
\lim_{l\in\N}x_{k+l}\right).$$ In particular, if $p\in
E(\beta\N)$, $p\,\text{-}\!\lim_{n\in \N}x_n=p\,\text{-}\!\lim_{k\in\N}
\left(p\,\text{-}\!\lim_{l\in\N}x_{k+l}\right)$.
\end{Lemma}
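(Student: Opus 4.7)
The plan is to unpack both sides using the defining formula~\eqref{sty1} for the addition on $\beta\N$. Set $y := (p+q)\,\text{-}\!\lim_{n\in\N} x_n$ and $z := q\,\text{-}\!\lim_{k\in\N}\left(p\,\text{-}\!\lim_{l\in\N} x_{k+l}\right)$; both points exist and are unique because $X$ is compact Hausdorff. To conclude $y = z$, it suffices, by the Hausdorff separation axiom, to verify that $z$ lies in the closure of every open neighborhood of $y$.

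Fix such an open $U \ni y$ and, invoking regularity of the compact Hausdorff space $X$, pick an open $V$ with $y \in V$ and $\overline{V} \subset U$. Let $A := \{n \in \N : x_n \in V\}$; by definition of $y$, $A \in p+q$. Formula~\eqref{sty1} then gives $\{k \in \N : A - k \in p\} \in q$. For every $k$ in this $q$-large set, the set $A - k = \{l \in \N : x_{k+l} \in V\}$ belongs to $p$, so the definition of the $p$-limit yields
$$
p\,\text{-}\!\lim_{l\in\N} x_{k+l} \in \overline{V} \subset U.
$$
Consequently $\{k \in \N : p\,\text{-}\!\lim_{l\in\N} x_{k+l} \in U\} \in q$, which forces $z \in \overline{U}$. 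Since $U$ was an arbitrary open neighborhood of $y$, Hausdorffness of $X$ gives $z = y$. The statement for $p \in E(\beta\N)$ is then just the special case $p + p = p$.

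The one delicate point — rather than a serious obstacle — is the passage from the open set $V$ to its closure: a $p$-limit of a sequence eventually in an open set $V$ is only guaranteed to land in $\overline{V}$, not in $V$ itself. This is precisely why one introduces the auxiliary neighborhood $V$ with $\overline{V} \subset U$, using regularity. Once this technicality is handled, the argument is a mechanical unwinding of the definition of $p$-limits together with the defining relation of the semigroup operation on $\beta\N$.
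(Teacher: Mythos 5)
Your proof is correct. The paper offers no proof of this lemma --- it simply cites \cite{Be}, Theorem 3.8 --- and your argument is the standard one: unwind the definition of $+$ on $\beta\N$ via~(\ref{sty1}), and handle the single delicate point (that a $p$-limit of points lying in an open set $V$ is only guaranteed to lie in $\overline{V}$) by passing to an auxiliary neighborhood $V$ with $\overline{V}\subset U$, which regularity of the compact Hausdorff space provides; the concluding step, that $z\in\overline{U}$ for every open $U\ni y$ forces $z=y$, is also justified correctly.
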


\subsection{$p$-limits in semitopological
compactifications of $\Z$}

Assume now that $S$ is a compact  metric semitopological
(i.e.\ left- and right topological\footnote{Recall
that for $(\beta\N,+)$ the addition~$+$ is left-continuous and is
not right-continuous. So, $(\beta\N,+)$ is a left topological semigroup, but not a semitopological semigroup.}) semigroup. In view of~(\ref{funkcjeip})
and the continuity of left- and right- translations, it follows
that whenever $(s_n)_{n\geq1}\subset S$ and $u\in S$
\beq\label{pmn101} p\,\text{-}\!\lim_{n\in\N}
(s_nu)=\left(p\,\text{-}\!\lim_{n\in\N}
s_n\right)u\;\;\mbox{and}\;\;p\,\text{-}\!\lim_{n\in\N}
(us_n)=u\left(p\,\text{-}\!\lim_{n\in\N} s_n\right).\eeq

Choose $s\in S$ and consider $s_n=s^n$,  $n\geq1$. By the
universality property of $\beta\N$ (see footnote~\ref{uczech}), it
follows that the  map $\beta\N\supset\N \ni n\mapsto s^n\in S$ has
a  unique extension to a surjective continuous semigroup
homomorphism of $(\beta\N,+)$ onto $\ov{\{s^n:\:n\in\N\}}$, the
extension being given by the formula $p\mapsto
p\,\text{-}\!\lim_{n\in\N}s^n$~\footnote{Indeed, it follows from
Lemma~\ref{plimit-idemp} (and~(\ref{pmn101})) that setting
$s_n=s^n$ we obtain that
$$
(p+q)\,\text{-}\!\lim_{n\in\N}s_n=q\,\text{-}\!\lim_{n\in\N}
\left(p\,\text{-}\!\lim_{m\in\N}s_{n+m}\right)=
q\,\text{-}\!\lim_{n\in\N}\left(p\,\text{-}\!\lim_{m\in\N}s_n\cdot
s_m\right)$$$$=q\,\text{-}\!\lim_{n\in\N}\left(s_n\cdot
\left(p\,\text{-}\!\lim_{m\in\N}s_m\right)\right)=
\left(q\,\text{-}\!\lim_{n\in\N}s_n\right)\cdot
\left(p\,\text{-}\!\lim_{n\in\N}s_n\right).$$}. Since it is a homomorphism, we have
the following.

\begin{Lemma}\label{plimit-proj} Assume that
$S$ is a compact metric semitopological semigroup. For each $p\in
E(\beta\N)$, $t:=p\,\text{-}\!\lim_{n\in\N}s^n$ is an idempotent of $S$,
i.e.\ $t\in E(S)$.
\end{Lemma}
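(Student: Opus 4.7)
The plan is to prove directly that $t\cdot t = t$ by combining Lemma~\ref{plimit-idemp} with the two-sided continuity of multiplication expressed in~(\ref{pmn101}). Since $S$ is semitopological, both left and right translations are continuous, so $p$-limits can be moved past either factor of a product.

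Starting from $t\cdot t = \bigl(p\,\text{-}\!\lim_{n\in\N} s^n\bigr)\cdot \bigl(p\,\text{-}\!\lim_{m\in\N} s^m\bigr)$, I would first apply~(\ref{pmn101}) with $u := p\,\text{-}\!\lim_{m\in\N} s^m$ held fixed to pull the outer factor under the limit, obtaining
$$t\cdot t \;=\; p\,\text{-}\!\lim_{n\in\N}\Bigl(s^n\cdot \bigl(p\,\text{-}\!\lim_{m\in\N} s^m\bigr)\Bigr).$$
Next, for each fixed $n$, the left-continuity clause of~(\ref{pmn101}) lets me slide $s^n$ inside the inner $p$-limit, so the expression becomes the iterated $p$-limit $p\,\text{-}\!\lim_{n\in\N}\bigl(p\,\text{-}\!\lim_{m\in\N} s^{n+m}\bigr)$. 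Applying Lemma~\ref{plimit-idemp} with $q = p$ and $x_k = s^k$ identifies this iterated limit with $(p+p)\,\text{-}\!\lim_{n\in\N} s^n$; since $p+p = p$ by idempotency, this equals $t$, and we conclude $t\cdot t = t$, i.e.\ $t\in E(S)$.

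There is no serious obstacle, as the statement is a clean application of the two tools cited; essentially it re-expresses the homomorphism property of the continuous extension $\beta\N\to\overline{\{s^n:n\in\N\}}$ sending $p\mapsto p\,\text{-}\!\lim_{n\in\N} s^n$, already sketched in the footnote preceding the lemma. The only point that requires care is the second interchange above, which uses that $S$ is right topological in addition to being left topological; without the full semitopological hypothesis the argument would break down at exactly that step.
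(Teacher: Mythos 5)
Your argument is correct and is essentially the paper's own proof: the paper establishes (in the footnote preceding the lemma, via Lemma~\ref{plimit-idemp} and~(\ref{pmn101})) that $p\mapsto p\,\text{-}\!\lim_{n\in\N}s^n$ is a semigroup homomorphism and then notes that homomorphisms send idempotents to idempotents, which is exactly your chain of equalities specialized to $q=p$. Your closing observation that both continuity clauses of~(\ref{pmn101}) are needed (one for each interchange) is accurate and matches why the paper assumes $S$ is semitopological rather than merely left topological.
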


From now on, we assume that \beq\label{mainass} \mbox{$S$ is a metrizable
semitopological compactification of $\Z$}.\eeq In other words, we
assume that $S$ is a compact metric Abelian semitopological
semigroup having a dense cyclic subgroup, that is, for some $u\in I(S)$,
$S=\ov{\{u^n:n\in\Z\}}$.
The unit of $S$ will be denoted by~$1$. The following statement
follows from Theorem~3.2 in \cite{La}.
\beq\label{add1}\mbox{Multiplication is jointly continuous at each
point $(i,s)\in I(S)\times S$.}\eeq

%\begin{Remark}\label{pproduktowa} Note that if $S$
%satisfies~(\ref{add1}) then for the product semitopological
%semigroups  $S^{\times N}$, $N\geq 2$, and $S\times
%S\times\ldots$,~(\ref{add1}) is also satisfied.
%\end{Remark}

\begin{Lemma}\label{plimit-ciaglosc} Assume that $S$ is a compact
metric Abelian semitopological semigroup satisfying~(\ref{add1}).
Let $p\in \beta\N$ and $p\,\text{-}\!\lim_{n\in\N} s_n=s$  and
$p\,\text{-}\!\lim_{n\in\N} t_n= t$, where $t\in I(S)$. Then
\beq\label{pmnozenie} p\,\text{-}\!\lim_{n\in\N} s_nt_n=st.\eeq If $p\in
E(\beta\N)$ and $u\in I(S)$ then $p\,\text{-}\!\lim_{n\in\N} u^n=1$ and
$p\,\text{-}\!\lim_{n\in\N} s_nu^n=s$.
\end{Lemma}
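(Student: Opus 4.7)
The lemma contains three assertions, which I would prove in order. The product formula $p\,\text{-}\!\lim s_nt_n=st$ (under the hypothesis $t\in I(S)$) is a direct consequence of joint continuity. Indeed, by (\ref{add1}), multiplication is jointly continuous at $(t,s)$ (equivalently at $(s,t)$, since $S$ is abelian). For any open $W\ni st$, joint continuity furnishes open $V\ni s$ and $U\ni t$ with $V\cdot U\subset W$; the sets $\{n:s_n\in V\}$ and $\{n:t_n\in U\}$ both lie in $p$, so does their intersection, and any $n$ in the intersection satisfies $s_nt_n\in V\cdot U\subset W$. This yields $\{n:s_nt_n\in W\}\in p$, i.e., $p\,\text{-}\!\lim s_nt_n=st$.

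The core of the lemma is the assertion $p\,\text{-}\!\lim u^n=1$ for $p\in E(\beta\N)$ and $u\in I(S)$. Set $v:=p\,\text{-}\!\lim u^n$; Lemma~\ref{plimit-proj} gives $v\in E(S)$. Since $I(S)\cap E(S)=\{1\}$ (as recorded in Section~\ref{section:p-limits}), the problem reduces to showing $v\in I(S)$. I would introduce $w:=p\,\text{-}\!\lim u^{-n}$ (which exists by compactness) and try to exploit the identity $u^n\cdot u^{-n}=1$: if one can identify the $p$-limit of the constant sequence $1$ with $v\cdot w$, then $v$ has a right inverse. A direct appeal to the first assertion would require $w\in I(S)$, creating a circularity. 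To break it, I would invoke Lemma~\ref{plimit}(ii) to produce an IP-set ${\rm FS}((n_i))$ along which $u^{n_\alpha}\to v$. Splitting $\alpha=\{i,j\}$ with $i<j$ and letting $j\to\infty$, the right continuity of multiplication would give $u^{n_i}\cdot v=v$ for all sufficiently large $i$; multiplying by $u^{-n_i}\in I(S)$ then yields $v=u^{-n_i}\cdot v$ as well. Combining these equalities with joint continuity at $I(S)\times S$ and the density of $\{u^n\}_{n\in\Z}$ should force $v\in I(S)$, hence $v\in I(S)\cap E(S)=\{1\}$.

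The third assertion $p\,\text{-}\!\lim s_nu^n=s$ is then a consequence of the first two: once $p\,\text{-}\!\lim u^n=1\in I(S)$, the first assertion applied with $t_n=u^n$ and $t=1$ yields $p\,\text{-}\!\lim s_nu^n=s\cdot 1=s$.

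The main obstacle is the middle assertion---specifically, the invertibility of $v$. Breaking the circularity between showing $v\in I(S)$ and applying the product formula to the paired sequence $(u^n,u^{-n})$ requires a careful interplay between the idempotence of $p$ (leveraged via Lemma~\ref{plimit-idemp} or the IP-set structure from Lemma~\ref{plimit}(ii)) and the joint continuity at $I(S)\times S$ furnished by (\ref{add1}). I expect this step to be where most of the technical work of the proof is concentrated.
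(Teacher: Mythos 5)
Your proofs of the product formula~(\ref{pmnozenie}) and of the final assertion $p\,\text{-}\!\lim_{n\in\N}s_nu^n=s$ coincide with the paper's: joint continuity at a point of $I(S)\times S$ guaranteed by~(\ref{add1}), intersection of two $p$-large sets, and then an application of~(\ref{pmnozenie}) with $t=1$. The problem is the middle assertion. You read ``$p\,\text{-}\!\lim_{n\in\N}u^n=1$'' as an unconditional claim and therefore set out to \emph{prove} that $v:=p\,\text{-}\!\lim_{n\in\N}u^n$ is invertible. That claim is false in the stated generality: take $U\in\cu(\ch)$ a mildly mixing unitary operator (no non-trivial rigid vectors) and let $S=\ov{\{U^n:\:n\in\Z\}}$ in the weak operator topology. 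Then $S$ is a compact metric Abelian semitopological semigroup satisfying~(\ref{add1}) (it is a metrizable semitopological compactification of $\Z$, so Lawson's theorem applies), and $U\in I(S)$ because $U^{-1}\in S$ and $Id$ is the unit of $S$; yet $p\,\text{-}\!\lim_{n\in\N}U^n={\rm proj}_{\ch_r}=0\neq Id$ for every idempotent $p$. So no argument can establish $v\in I(S)$ from these hypotheses, and the individual steps of your sketch also do not go through: the IP-splitting only yields $d(u^{n_i}v,v)\leq\vep$ for large $i$, not the exact identity $u^{n_i}v=v$, and even granting $v=u^{-n_i}v$ there is no way to pass to the limit in $i$ without joint continuity at the (unknown, possibly non-invertible) point $v$.

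What the paper actually does is keep the hypothesis ``$t\in I(S)$'' from the first sentence in force with $t_n=u^n$: the invertibility of the limit $t=p\,\text{-}\!\lim_{n\in\N}u^n$ is an \emph{assumption}, not a conclusion --- its proof says explicitly that $t$ ``is an idempotent which, by assumption, is in $I(S)$''. Under that reading the middle assertion reduces to the two observations you already have in hand: $t\in E(S)$ by Lemma~\ref{plimit-proj}, and $I(S)\cap E(S)=\{1\}$, hence $t=1$. (In every application in the paper the relevant limit is known beforehand to be $1$ or $Id$, so this conditional form suffices.) In short, the ``main obstacle'' you identify is not overcome in the paper --- it is assumed away --- and your plan for overcoming it cannot succeed as stated.
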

\begin{proof} According to~(\ref{add1}), given $\vep>0$ there exists $\delta>0$
such that if $d(s',s)<\delta$ and $d(t',t)<\delta$ then
$d(s't',ts)<\vep$. It follows that
$$\{n\in\N:\:d(s_nt_n,st)<\vep\}\supset\{n\in\N:\:d(t_n,t)<\delta\}\cap\{n\in\N:\:
d(s_n,s)<\delta\}.$$ As this intersection is still a $p$-large
set, so is the set $\{n\in\N:\:d(s_nt_n,st)<\vep\}$
and~(\ref{pmnozenie}) follows.

To complete the proof notice that if $p$ is additionally an
idempotent  then, in view of Lemma~\ref{plimit-proj},
$p\,\text{-}\!\lim_{n\in\N} u^n=t$ is an idempotent which, by assumption,
is in $I(S)$. Hence $t=1$. Finally, using~(\ref{pmnozenie}),
$p\,\text{-}\!\lim_{n\in\N} s_nu^n=s\cdot 1=s$.
\end{proof}

Now we consider sequences of the form $(s^{P(n)})$, where
$P\in{\cal P}$, and study their $p$-limits for $p\in E(\beta\N)$.

We will need the following elementary observation on integer coefficient
polynomials.

\begin{Lemma}\label{dop} Let $P\in{\cal P}$ be of degree $d\geq1$.
Then\beq\label{pdecomp1} P(x+y)-P(x)=Q(x,y)+P(y),\eeq where
$Q(x,y)\in \Z[x,y]$ is a polynomial divisible by $xy$ in the ring
$\Z[x,y]$. The $x$-degree\footnote{We assume that the degree
of~$0$ is equal to $0$.} and the $y$-degree of $Q(x,y)$ are equal
to $d-1$.~\footnote{The
result is immediate for monomials, hence it holds for all
polynomials $0\neq P\in{\cal P}$ (we recall that for any $P\in\cp$, we have $P(0)=0$,
so $\cp$ does not contain non-trivial constant polynomials).}
\end{Lemma}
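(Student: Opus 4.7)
The plan is to define $Q(x,y):=P(x+y)-P(x)-P(y)$, which makes the identity~(2.7) hold by construction, and then to verify the two remaining assertions separately. Clearly $Q\in\Z[x,y]$ since $P\in\Z[x]$.

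For the divisibility claim, I would substitute $y=0$ to obtain $Q(x,0)=P(x)-P(x)-P(0)=0$, using the standing hypothesis $P(0)=0$. Hence $y\mid Q$ in $\Z[x,y]$. Symmetrically, $Q(0,y)=P(y)-P(0)-P(y)=0$ gives $x\mid Q$. Since $x$ and $y$ are coprime irreducibles in the unique factorization domain $\Z[x,y]$, it follows that $xy\mid Q$.

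For the degree statement, I would exploit linearity of $P\mapsto Q$ to reduce to the case of monomials. For $P(x)=x^k$ with $k\geq 1$, the binomial theorem gives
\[
Q(x,y)=(x+y)^k-x^k-y^k=\sum_{j=1}^{k-1}\binom{k}{j}x^jy^{k-j},
\]
which has $x$-degree $k-1$ (attained at $j=k-1$) and $y$-degree $k-1$ (attained at $j=1$); for $k=1$ the sum is empty and $Q=0$, consistent with the convention $\deg 0=0=d-1$. For a general $P=\sum_{k=1}^{d}a_kx^k$ with $a_d\neq 0$, linearity yields
\[
Q(x,y)=\sum_{k=2}^{d}a_k\sum_{j=1}^{k-1}\binom{k}{j}x^jy^{k-j}.
\]
The coefficient of $x^{d-1}y$ equals $a_d\binom{d}{d-1}=da_d\neq 0$, while no term $x^jy^{k-j}$ with $k\leq d$ and $1\leq j\leq k-1$ has $x$-degree exceeding $d-1$; hence $\deg_x Q=d-1$. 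The identity is symmetric in $x$ and $y$ (i.e.\ $Q(x,y)=Q(y,x)$), so $\deg_y Q=d-1$ as well.

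There is essentially no obstacle here: the statement is an elementary identity in $\Z[x,y]$, and the only mild subtlety is the convention about the degree of the zero polynomial, which covers the degenerate case $d=1$ where $Q$ vanishes identically.
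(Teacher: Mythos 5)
Your proof is correct and follows essentially the same route as the paper, whose entire argument is the footnote ``the result is immediate for monomials, hence it holds for all polynomials'' --- i.e.\ the binomial expansion $(x+y)^k-x^k-y^k=\sum_{j=1}^{k-1}\binom{k}{j}x^jy^{k-j}$ plus linearity, exactly as you do. Your extra detour through coprimality of $x$ and $y$ in the UFD $\Z[x,y]$ is harmless but unnecessary, since the monomial expansion already exhibits every term as divisible by $xy$.
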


\begin{Lemma}\label{idempotent} Assume that $S$ satisfies~(\ref{add1}) and
and let $s\in I(S)$. Let $P\in{\cal P}$ be a polynomial of degree
$d\ge 1$. Assume that for $j=1,\ldots,d-1$ there exists $r_j\ge 1$
such that $$p\,\text{-}\!\lim\limits_{n\in \N} s^{r_jn^j}=1.$$ Then
$p\,\text{-}\!\lim\limits_{n\in \N} s^{P(n)}\in E(S)$.
\end{Lemma}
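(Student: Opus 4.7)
The plan is to prove that $t := p\,\text{-}\!\lim_{n\in\N} s^{P(n)}$ satisfies $t^2 = t$. Since $p \in E(\beta\N)$, Lemma~\ref{plimit-idemp} gives
\[ t = p\,\text{-}\!\lim_{k\in\N}\bigl( p\,\text{-}\!\lim_{l\in\N} s^{P(k+l)} \bigr), \]
and Lemma~\ref{dop} yields the decomposition $P(k+l) = P(k) + P(l) + Q(k,l)$, where $Q(k,l) \in \Z[k,l]$ is divisible by $kl$ and has $l$-degree $d-1$. The identity $s^{P(k+l)} = s^{P(k)} \cdot s^{P(l)} \cdot s^{Q(k,l)}$ is the starting point I would exploit.

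Writing $Q(k,l) = \sum_{j=1}^{d-1} b_j(k)\, l^j$, divisibility by $kl$ forces $k \mid b_j(k)$ for each $j$. I set $M := \mathrm{lcm}(r_1,\ldots,r_{d-1})$ (with $M = 1$ if $d = 1$, where the hypothesis is vacuous) and restrict the outer $p$-limit to $k \in M\N$, which is legitimate by~(\ref{u1}) and~(\ref{u2}). For such $k$, $r_j \mid M \mid k \mid b_j(k)$, so $b_j(k) = r_j c_j(k)$ for some $c_j(k) \in \Z$, and $s^{b_j(k) l^j} = (s^{r_j l^j})^{c_j(k)}$. The hypothesis provides $p\,\text{-}\!\lim_l s^{r_j l^j} = 1 \in I(S)$, and since $I(S)$ is a topological group (in particular $x \mapsto x^c$ is continuous at $1$ for each $c \in \Z$), formula~(\ref{funkcjeip}) gives $p\,\text{-}\!\lim_l s^{b_j(k) l^j} = 1$. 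Combining the $d-1$ factors inductively by Lemma~\ref{plimit-ciaglosc} (each partial limit being $1 \in I(S)$) yields $p\,\text{-}\!\lim_l s^{Q(k,l)} = 1$.

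Two applications of separate continuity now close the argument. First, for $k \in M\N$ I pull out the constant $s^{P(k)}$ on the left and combine $s^{P(l)}$ with $s^{Q(k,l)}$ via Lemma~\ref{plimit-ciaglosc} to obtain $p\,\text{-}\!\lim_l s^{P(k+l)} = s^{P(k)} \cdot t$. Then, pulling $t$ out on the right and returning from $M\N$ to $\N$ by~(\ref{u2}),
\[ t = p\,\text{-}\!\lim_{M \mid k}\bigl(s^{P(k)} \cdot t\bigr) = \Bigl(p\,\text{-}\!\lim_{k\in\N} s^{P(k)}\Bigr) \cdot t = t \cdot t, \]
so $t \in E(S)$. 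The main obstacle is the inner identity $p\,\text{-}\!\lim_l s^{Q(k,l)} = 1$: it requires combining the arithmetic trick of passing to multiples of $M$ (valid by idempotency via~(\ref{u1})) with the topological structure on $I(S)$ in order to control the possibly negative, $k$-dependent exponents $c_j(k)$; once this is in hand, Lemma~\ref{plimit-ciaglosc} finishes things cleanly.
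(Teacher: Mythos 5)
Your proposal is correct and follows essentially the same route as the paper's proof: the same decomposition $P(k+l)=P(k)+Q(k,l)+P(l)$ from Lemma~\ref{dop}, the same use of Lemma~\ref{plimit-idemp}, the restriction of the outer limit to multiples of ${\rm lcm}(r_1,\ldots,r_{d-1})$ via~(\ref{u2}), and Lemma~\ref{plimit-ciaglosc} to pass limits through the products. The only cosmetic difference is that the paper obtains $p\,\text{-}\!\lim_l s^{lr_jm^j}=1$ by iterating~(\ref{pmnozenie}) rather than by appealing to continuity of $x\mapsto x^c$ on $I(S)$, which is the cleaner way to justify that step (and also handles negative coefficients $c_j(k)$, since $p\,\text{-}\!\lim_l s^{-r_jl^j}=1$ follows from~(\ref{pmnozenie}) applied to $s^{-r_jl^j}\cdot s^{r_jl^j}=1$).
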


\begin{proof} First, notice that in view of~(\ref{add1}) and~(\ref{pmnozenie}),
we have \beq\label{periodl}
 p\,\text{-}\!\lim\limits_{n\in \N}s^{lr_jn^j}=1
\eeq for each $l\ge 1$ and $j=1,\ldots,d-1$.

By Lemmata \ref{plimit-idemp} and \ref{dop}
$$
u:=p\,\text{-}\!\lim\limits_{n\in \N}s^{P(n)}=p\,\text{-}\!\lim\limits_{n\in
\N}(p\,\text{-}\!\lim\limits_{m\in \N}s^{P(n+m)})= p\,\text{-}\!\lim\limits_{n\in \N}
s^{P(n)}(p\,\text{-}\!\lim\limits_{m\in \N}(s^{(Q(n,m)+P(m)}),
$$
where $Q(x,y)\in\Z[x,y]$ is divisible by $xy$. Set
$r={\rm lcm}(r_1,\ldots,r_{d-1})$. Using (\ref{u2}), Lemma
\ref{plimit-ciaglosc} and (\ref{periodl}), we obtain that
$$
u=p\,\text{-}\!\lim\limits_{r|n} s^{P(n)}(p\,\text{-}\!\lim\limits_{m\in
\N}(s^{(Q(n,m)}\cdot s^{P(m)})=p\,\text{-}\!\lim\limits_{r|n} s^{P(n)}(1\cdot
u)=u^2.
$$
\end{proof}

Lemma \ref{idempotent} applied in the case $d=1$ shows  that
$p\,\text{-}\!\lim\limits_{n\in\N} s^{an}\in E(S)$ for any $a\in\Z$.

\subsection{From ordinary convergence to IP- and $p$-limits}

Let $S$ be a compact metric Abelian semitopological semigroup with
$I(S)\neq\emptyset$.

Assume that we have a countable collection of polynomials
$P_i\in{\cal P}$, $i\geq 1$. Assume moreover that for some
sequence $(q_n)\subset\N$ such that for each $i\geq1$ we have
$s^{P_i(q_n)}\to e_i$ in $S$, where $e_i\in E(S)$. Can we find
$p\in E(\beta\N)$ such that $p\,\text{-}\!\lim_{n\in\N}s^{P_i(n)}=e_i$
for each $i\geq1$? The proposition below gives a list of
conditions that will guarantee the positive answer to this
question.

\begin{Prop}\label{plimitpol} Fix $N\geq1$ and assume
that $0\neq P_i\in{\cal P}_{\leq N}$, $i\geq 1$. Assume that $S$
satisfies~(\ref{add1}) and let $s\in I(S)$. Let $(q_n)$ be an
increasing sequence of natural numbers such that for some
$r_j\geq1$, $j=1,\ldots,N-1$, \beq\label{plimitpol1}
s^{r_jq_n^j}\to 1.\eeq Denote $r={\rm lcm}(r_1,\ldots,r_{N-1})$
and assume in addition that \beq\label{plimitpol2}
r|q_n\;\;\mbox{for}\;\;n\geq n_0.\eeq Finally, assume that for
each $i\geq1$ \beq\label{plimitpol3}s^{P_i(q_n)}\to e_i\in
E(S).\eeq Then there exists $p\in E(\beta\N)$ such that for each
$i\geq1$ \beq\label{plimitpol4}
p\,\text{-}\!\lim_{n\in\N}s^{P_i(n)}=e_i.\eeq
\end{Prop}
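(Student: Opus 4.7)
The plan is to show that the set of ultrafilters already computing the correct limits is a compact subsemigroup of $(\beta\N,+)$, to which Ellis's lemma can be applied to produce an idempotent $p$ with the required properties. Set
\[\mathcal{A}=\{p\in\beta\N:\ r\N\in p,\ p\,\text{-}\!\lim_{n\in\N} s^{P_i(n)}=e_i\ \forall i\geq 1,\ p\,\text{-}\!\lim_{n\in\N} s^{r_j n^j}=1\ \forall j=1,\ldots,N-1\}.\]
First I would observe that $\mathcal{A}$ is nonempty: any ultrafilter containing every tail $\{q_n:n\geq K\}$ for $K\geq n_0$ lies in $\mathcal{A}$, directly by hypotheses~(\ref{plimitpol1})--(\ref{plimitpol3}). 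Second, $\mathcal{A}$ is closed in $\beta\N$ since each condition cuts out either the preimage of a point under a continuous extension $\beta\N\to S$ or the clopen set $\overline{r\N}$. Thus the whole proposition reduces to verifying that $\mathcal{A}$ is closed under~$+$.

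For $p,q\in\mathcal{A}$ and any polynomial $P$ in our list (either $P_i$ with target $e=e_i$, or $r_jx^j$ with target $e=1$), Lemma~\ref{plimit-idemp} gives $(p+q)\,\text{-}\!\lim_{n\in\N} s^{P(n)}=q\,\text{-}\!\lim_{k\in\N}(p\,\text{-}\!\lim_{l\in\N} s^{P(k+l)})$. Lemma~\ref{dop} supplies $P(k+l)=P(k)+P(l)+Q(k,l)$ with $Q(x,y)\in\Z[x,y]$ divisible by $xy$ and of $y$-degree at most $d-1\leq N-1$ (where $d=\deg P$). Since $r\N\in q$, one may restrict attention to $k\in r\N$; for such $k$, writing $Q(k,y)=\sum_{j=1}^{d-1}c_j(k)y^j$, each $c_j(k)$ is a multiple of $k$, hence of $r$, hence of $r_j$. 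Setting $c_j(k)=r_j m_j(k)$ yields
\[s^{Q(k,l)}=\prod_{j=1}^{d-1}\bigl(s^{r_j l^j}\bigr)^{m_j(k)}.\]
Each factor has $p\,\text{-}\!\lim$ equal to $1\in I(S)$, by continuity of the $m_j(k)$-th power map combined with the hypothesis on $p$. Iteratively applying Lemma~\ref{plimit-ciaglosc} (each factor's limit lies in $I(S)$) gives $p\,\text{-}\!\lim_{l\in\N} s^{Q(k,l)}=1$. A further application of Lemma~\ref{plimit-ciaglosc} combines this with $p\,\text{-}\!\lim_{l\in\N} s^{P(l)}=e$ to give $p\,\text{-}\!\lim_{l\in\N} s^{P(l)+Q(k,l)}=e\cdot 1=e$; separate continuity of left-multiplication by $s^{P(k)}$ then yields $p\,\text{-}\!\lim_{l\in\N} s^{P(k+l)}=s^{P(k)}\cdot e$. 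Finally, separate continuity of right-multiplication by $e$, together with $q\,\text{-}\!\lim_{k\in\N} s^{P(k)}=e$ and $e\cdot e=e$, gives $(p+q)\,\text{-}\!\lim_{n\in\N} s^{P(n)}=e$. The remaining condition $r\N\in p+q$ is immediate from $r\N\in p\cap q$ and definition~(\ref{sty1}), since for $k\in r\N$ one has $r\N-k=r\N\in p$.

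Having verified that $\mathcal{A}$ is a closed subsemigroup of the compact left topological semigroup $(\beta\N,+)$, Ellis's lemma supplies an idempotent $p\in\mathcal{A}$, which then satisfies~(\ref{plimitpol4}) by construction. The main obstacle is the computation $p\,\text{-}\!\lim_{l\in\N} s^{Q(k,l)}=1$ in the closure-under-$+$ step: this is precisely where the arithmetic hypotheses $r\mid q_n$ and $s^{r_jq_n^j}\to 1$ interlock with the polynomial identity of Lemma~\ref{dop} to control the cross terms arising from expanding $P(k+l)$, and explains why these hypotheses (not merely the convergence $s^{P_i(q_n)}\to e_i$) appear in the statement.
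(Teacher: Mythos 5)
Your proof is correct, but it takes a genuinely different route from the paper's. The paper argues in two stages: first, for a single polynomial $P$, it inductively selects a subsequence $(q_{k_w})$ of $(q_n)$ so that ${\rm IP}\,\text{-}\!\lim s^{P((q_{k_w})_\alpha)}=e$ (the induction controls $s^{P(a_\alpha+q_{k_{w+1}})}$ via Lemma~\ref{dop} exactly as you control the cross terms); second, it diagonalizes over the countably many polynomials and invokes Lemma~\ref{plimit}~(i) (which rests on Hindman's theorem) to convert the ${\rm IP}$-convergence into a $p\,$-limit for an idempotent $p$. You instead exhibit the set $\mathcal A$ of ultrafilters already computing the desired limits as a nonempty closed subsemigroup of $(\beta\N,+)$ and apply Ellis' lemma directly; the semigroup verification uses Lemmas~\ref{plimit-idemp}, \ref{dop} and~\ref{plimit-ciaglosc} together with $r\N\in p\cap q$ in essentially the same way the paper's Lemma~\ref{idempotent} does. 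Your argument is shorter and avoids both the delicate recursive selection and the diagonalization; what it gives up is the constructive content of the paper's proof, namely an explicit ${\rm IP}$-set built from sums of a subsequence of the original $(q_n)$ along which ${\rm IP}$-convergence holds --- and that content is reused verbatim later (Section~\ref{howmany}, proof of Proposition~\ref{kl}, cites ``Part~2 of the proof of Proposition~\ref{plimitpol}''), so the paper's version cannot simply be replaced by yours without reworking that later step. Your idempotent need bear no relation to $(q_n)$, which is fine for the statement as given. One small imprecision: ``continuity of the $m_j(k)$-th power map'' is not available in a merely semitopological semigroup; the correct mechanism is the iterated application of Lemma~\ref{plimit-ciaglosc} you invoke in the next sentence (each partial product converges to $1\in I(S)$), and when $m_j(k)<0$ you additionally need that $p\,\text{-}\!\lim_{l\in\N}s^{-r_jl^j}=1$, which follows from $p\,\text{-}\!\lim_{l\in\N}s^{r_jl^j}=1$, the invertibility of $s$, and~(\ref{add1}) --- the same point the paper compresses into ``the polynomials $R$ for which $s^{R(q_n)}\to1$ form a subgroup of $\cal P$.''
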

\begin{proof} We divide the proof into two parts. In the first part we assume that $P_1=P_2=\ldots$ In the second part, using a diagonalization procedure and the first part of the proof, we complete the proof.

 {\bf Part 1.} \ We will first prove the result when the family
$\{P_i:\:i\geq1\}$ consists of one polynomial $P=P_1=P_2=\ldots$ and we set
$e_1=e\in E(S)$. Assume, without loss of generality, that ${\rm deg}\,P=N$ and write $P(x)=M_Nx^N+
\ldots+M_2x^2+M_1x\in\Z[x]$. First, choose $k_1>n_0$ so that
$$
d\left(s^{P(q_{k_1})},e\right)<\frac1{2^1},$$
$$
d\left(s^{P(q_{k_1})}e,e\right)<\frac1{2^1},$$ which is possible
by letting $k_1\to\infty$ in~(\ref{plimitpol3}) and using the
semicontinuity of the multiplication (and the fact that $e\cdot e=e$).

Suppose that for some $w\geq1$ the numbers $k_1<k_2<\ldots<k_w$
have already been selected so that for all choices
$i_1<\ldots<i_t$ with $i_1,\ldots,i_t\in\{1,\ldots,w\}$,
\beq\label{sq1} d\left(s^{P(q_{k_{i_1}}+\ldots+
q_{k_{i_t}})},e\right)<\frac1{2^{i_1}},\eeq \beq\label{sq2}
d\left(s^{P(q_{k_{i_1}}+\ldots+
q_{k_{i_t}})}e,e\right)<\frac1{2^{i_1}}.\eeq

We have now to select $k_{w+1}$. If we  show that this choice
depends only on the fact that $k_{w+1}$ is sufficiently large then
we are done because we deal with a bounded number of indices
$i_1<\ldots<i_t$.

Given $\alpha\subset\{1,\ldots,w\}$ ($\alpha$ may be empty), we
set $a_\alpha=\sum_{j\in\alpha}q_{k_j}$ ($a_\emptyset=0$). In view
of Lemma~\ref{dop}, we obtain
$$
s^{P(a_\alpha+q_{k_{w+1}})}=s^{P(a_\alpha)}
s^{Q(a_\alpha,q_{k_{w+1}})}s^{P(q_{k_{w+1}})}.$$ The polynomials
$R\in{\cal P}$, for which $s^{R(q_n)}\to 1$, form a subgroup of
$\cal P$. Since $r|a_\alpha$, $Q(x,y)$ is divisible by $x$ and has
$y$-degree $N-1$, it follows by~(\ref{plimitpol1})
and~(\ref{plimitpol3}) that
$$
s^{Q(a_\alpha,q_{k_{w+1}})}\to 1$$ and
$$
s^{Q(a_\alpha,q_{k_{w+1}})}s^{P(q_{k_{w+1}})}\to e,
$$
$$
s^{Q(a_\alpha,q_{k_{w+1}})}s^{P(q_{k_{w+1}})}e\to e$$ when
$k_{w+1}\to\infty$. In view of~(\ref{sq1}) and~(\ref{sq2}), there
exists $\vep>0$ such that
$$
d(s^{P(a_\alpha)},e)<\frac1{2^{\min\alpha}} -\vep,$$
$$
d(s^{P(a_\alpha)}e,e)<\frac1{2^{\min\alpha}} -\vep$$ for each
$\emptyset\neq\alpha\subset\{1,\ldots,w\}$. By the semicontinuity of
multiplication, we can find $\delta>0$ such that whenever
$d(s',s^{\prime\prime})<\delta$,
$$
d(s^{P(a_\alpha)}s',s^{P(a_\alpha)}s^{\prime\prime})<\vep.$$
Select $k_{w+1}$ large enough to satisfy
$$
d(s^{Q(a_\alpha,q_{k_{w+1}})}s^{P(q_{k_{w+1}})}, e)<\delta
$$
and
$$
d(s^{Q(a_\alpha,q_{k_{w+1}})}s^{P(q_{k_{w+1}})}e, e)<\delta$$ for
each $\emptyset\neq\alpha\subset\{1,\ldots,w\}$. Then
$$
d(s^{P(a_\alpha+q_{k_{w+1}})},e)\leq
d(s^{P(a_\alpha)}s^{Q(a_\alpha,q_{k_{w+1}})}s^{P(q_{k_{w+1}})},
s^{P(q_{k_{w+1}})}e)$$
$$
+d(s^{P(q_{k_{w+1}})}e,e)<\vep+\left(
\frac1{2^{\min\alpha}}-\vep\right)=\frac1{2^{\min\alpha}}$$ and
similarly
$$d(s^{P(a_\alpha+q_{k_{w+1}})}e,e)< \frac1{2^{\min\alpha}}.
$$
for each $\emptyset\neq\alpha\subset\{1,\ldots,w\}$. We can also
assume that $k_{w+1}$ yields
$d(s^{P(q_{k_{w+1}})},e)<\frac1{2^{w+1}}$ and
$d(s^{P(q_{k_{w+1}})}e,e)<\frac1{2^{w+1}}$ which covers the case
$\alpha=\emptyset$.

We have proved that our recurrence procedure can be continued. In
view of~(\ref{sq1}), $\mbox{IP}-\lim s^{P((q_{k_w})\alpha)}=e$. We
use now Lemma~\ref{plimit} to complete the proof in the case of
one-element family of polynomials.

{\bf Part 2.} By Part 1 of the proof, we can select a subsequence
$(q_{k_w^{(1)}})_{w\geq1}$ of $(q_n)_{n\geq1}$, so that $${\rm IP}\,\text{-}\!\lim
s^{P_1((q_{k_w^{(1)}})_\alpha)}=e_1$$ (that is, the
${\rm IP}$-convergence holds along the ${\rm IP}$-set
${\rm FS}((q_{k_w^{(1)}})_{w\geq1})$). At stage $j+1$ (using repeatedly
Part~1 of the proof), we select a subsequence
$(q_{k_w^{(j+1)}})_{w\geq1}$ of $(q_{k_w^{(j)}})_{w\geq1}$, so
that
$${\rm IP}\,\text{-}\!\lim
s^{P_{j+1}((q_{k_w^{(j+1)}})_\alpha)}=e_{j+1}$$ (that is, the
${\rm IP}$-convergence holds along the sub-${\rm IP}$-set
${\rm FS}((q_{k_w^{(j+1)}})_{w\geq1})$ of
${\rm FS}((q_{k_w^{(j)}})_{w\geq1})$).

Now, for each $i\geq 1$, we set $q_{m_i}=q_{k^{(i)}_i}$. It
follows that for each $j\geq1$, up to a finite number of terms,
$(q_{m_i})_{i\geq1}$ is a subsequence of
$(q_{k_w^{(j)}})_{w\geq1}$. Therefore, whenever we have an
${\rm IP}$-convergence along ${\rm FS}((q_{k_w^{(j)}})_{w\geq1})$, we also
have the ${\rm IP}$-convergence along ${\rm FS}((q_{m_i})_{i\geq1})$ (to the
same limit). We conclude that for each $j\geq1$,
$$
{\rm IP}\,\text{-}\!\lim s^{P_j((q_{m_i})_\alpha)}=e_j$$ and the result follows
from Lemma~\ref{plimit}~(i).
\end{proof}

\begin{Remark} We would like to emphasize that the
assumption~(\ref{plimitpol1}) in
Proposition~\ref{plimitpol} seems to be crucial. Indeed, in the proof of Proposition~\ref{plimitpol}, due to~(\ref{plimitpol1}), we are able to choose a subsequence $(q_{k_n})_{n\geq1}$ so that ${\rm IP}\,\text{-}\!\lim s^{P_i(q_{k_n})}=e_i$ for each $i\geq1$ and then we obtain a ``good'' $p\in E(\beta\N)$ using Lemma~\ref{plimit}.  However, in general, for a $(q_n)$ growing rapidly to infinity, it is even possible to have $U\in{\cal U}({\cal H})$ such that $U^{lq_n}\to
0$ for each $l\geq1$ and $U^{q_n^2}\to Id$. Then, for any subsequence $(q_{k_n})_{n\geq1}$ we have the same  convergence but we cannot have the IP-convergence as this, by Lemma~\ref{plimit}, yields $p\in
E(\beta\N)$ with $p\,\text{-}\!\lim_{n\in\N} U^{ln}=0$ for each $l\geq1$
and $p\,\text{-}\!\lim_{n\in\N} U^{n^2}=Id$, a contradiction with Corollary~B.
\end{Remark}

\section{$p$-polynomial decomposition of a Hilbert
space}\label{poldec}

\subsection{Some immediate consequences of Theorem~A}\label{poldec2013} Throughout this section we fix $p\in E(\beta\N)$. Before we start the proof of Theorem~A, let us make first some introductory
remarks and derive some simple consequences of it.

Suppose that we have a decomposition~(\ref{dcm}) satisfying
only~(\ref{qw1}). Then if for $k,l\in\N$ we have
$$
p\,\text{-}\!\lim_{n\in\N}\left(U|_{{\cal H}_k}\right)^{P(n)}
=p\,\text{-}\!\lim_{n\in\N}\left(U|_{{\cal H}_l}\right)^{P(n)}
$$
for each polynomial $P\in{\cal P}_{\leq N}$ (call such $k,l$ {\em
equivalent}), then we can replace both subspaces ${\cal
H}^{(N)}_k$ and ${\cal H}^{(N)}_l$ by one subspace ${\cal
H}^{(N)}_k\oplus {\cal H}^{(N)}_l$ and still have a decomposition
satisfying~(\ref{qw1}). By grouping up subspaces whose indices are
equivalent, we achieve a decomposition~(\ref{dcm}) in which
additionally to~(\ref{qw1}) we have~(\ref{qw2}).

Now, we claim that if a decomposition~(\ref{dcm})
satisfying~(\ref{qw1}) and~(\ref{qw2}) exists then it is unique.
Indeed, it is enough to show that
\beq\label{doda1}\begin{array}{l}\mbox{if $\cal F$ is a closed
$U$-invariant subspace such that~(\ref{qw1}) is satisfied on it,}\\
\mbox{then there is $k\geq1$ such that ${\cal F}\subset {\cal
H}^{(N)}_k$.}\end{array}\eeq
To prove this claim, suppose that for no $k\geq1$, $\cal F$ is included in ${\cal H}_k^{(N)}$. It follows that for some $x\in {\cal F}$,
$x=\sum_{i\geq1} x_i$ with $x_i\in {\cal H}^{(N)}_i$ ($i\geq1$)
there are $i_1\neq i_2$ such that $x_{i_1}\neq0\neq x_{i_2}$. By interchanging the roles of $x_{i_1}$
and $x_{i_2}$, if necessary, by~(\ref{qw2}), we can assume that there exists
$Q\in{\cal P}_{\leq N}$ such that
$$
p\,\text{-}\!\lim_{n\in\N}U^{Q(n)}x_{i_1}=0,\;
p\,\text{-}\!\lim_{n\in\N}U^{Q(n)}x_{i_2}=x_{i_2}.$$ On the other hand, we have
either
$$
p\,\text{-}\!\lim_{n\in\N}U^{Q(n)}x=x\;\;\mbox{or}\;\;
p\,\text{-}\!\lim_{n\in\N}U^{Q(n)}x=0.$$ In the first case, we obtain
$$
\sum_{i\geq 1}x_i=x=p\,\text{-}\!\lim_{n\in\N}U^{Q(n)}x= \sum_{i\geq1}
p\,\text{-}\!\lim_{n\in\N}U^{Q(n)}x_i,$$ where
$p\,\text{-}\!\lim_{n\in\N}U^{Q(n)}x_i\in{\cal H}^{(N)}_i$, $i\geq 1$. In particular, $p\,\text{-}\!\lim_{n\in\N}U^{Q(n)}x_{i_k}=x_{i_k}$ for $k=1,2$ and we obtain contradiction. Similarly, we obtain a contradiction in the second case and therefore~(\ref{doda1}) follows.

The following result is a direct consequence of~(\ref{doda1}).

\begin{Cor} \label{specsub} The subspaces ${\cal H}^{(N)}_k$ in the
decomposition~(\ref{dcm}) are {\em spectral subspaces}, i.e.\ if
$y\in{\cal H}$ and the spectral measure\footnote{The spectral
measure of $y\in {\cal H}$ is the unique finite, positive Borel
measure $\sigma_y$ on $\cir^1$ satisfying
$\widehat{\sigma}_y(n)=\langle U^ny,y\rangle$ for all $n\in\Z$;
see e.g.\ \cite{Gl}, Chapter~5.} of $y$ is absolutely continuous with
respect to the maximal spectral type of $U|_{{\cal H}^{(N)}_k}$
then $y\in {\cal H}^{(N)}_k$.\end{Cor}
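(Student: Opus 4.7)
The statement cries out for an application of~(\ref{doda1}) to the cyclic subspace generated by $y$. Setting aside the trivial case $y=0$, write $\sigma_k$ for the maximal spectral type of $U|_{\mathcal{H}_k^{(N)}}$ and set
$$\mathcal{F} := \overline{\operatorname{span}}\{U^n y : n\in\mathbb{Z}\}.$$
My plan is to (a) show that~(\ref{qw1}) holds on $\mathcal{F}$, with the $0/Id$ value of $p\text{-}\!\lim_n(U|_{\mathcal{F}})^{P(n)}$ agreeing with that of $p\text{-}\!\lim_n(U|_{\mathcal{H}_k^{(N)}})^{P(n)}$ for each $P\in\mathcal{P}_{\leq N}$; (b) invoke~(\ref{doda1}) to obtain $\mathcal{F}\subset\mathcal{H}_l^{(N)}$ for some $l\geq 1$; (c) use $\mathcal{F}\neq\{0\}$ and the agreement established in (a) together with~(\ref{qw2}) in contrapositive form to force $l=k$; and conclude $y\in\mathcal{F}\subset\mathcal{H}_k^{(N)}$.

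The crux is step (a). Pick $w\in\mathcal{H}_k^{(N)}$ whose spectral measure $\sigma_w$ is equivalent to $\sigma_k$ (available by separability of $\mathcal{H}$). The spectral theorem identifies the cyclic subspace $\mathcal{Z}(w)\subset\mathcal{H}_k^{(N)}$ with $L^2(\mathbb{S}^1,\sigma_w)$ and $U$ with multiplication by $z$. Through this identification, the restriction of~(\ref{qw1}) to $\mathcal{Z}(w)$ is equivalent to the following \emph{functional} dichotomy: for each $P\in\mathcal{P}_{\leq N}$, the bounded linear functional
$$T_P(h) := p\text{-}\!\lim_{n\in\mathbb{N}}\int_{\mathbb{S}^1} z^{P(n)} h(z)\,d\sigma_w(z),\qquad h\in L^1(\sigma_w),$$
is either identically zero or the evaluation $h\mapsto\int h\,d\sigma_w$. (It suffices to check this on products $f\bar g$ with $f,g\in L^2(\sigma_w)$, which are dense in $L^1(\sigma_w)$; the extension uses $|T_P(h)|\leq\|h\|_{L^1(\sigma_w)}$.)

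Now $\sigma_y\ll\sigma_k\sim\sigma_w$ gives, via Radon--Nikodym, a density $h_0\geq 0$ in $L^1(\sigma_w)$ with $d\sigma_y=h_0\,d\sigma_w$. Under the identification $\mathcal{F}\cong L^2(\sigma_y)$ (with $y\leftrightarrow 1$), any two vectors $u,v\in\mathcal{F}$ correspond to $\alpha,\beta\in L^2(\sigma_y)$, and
$$\langle U^{P(n)} u,v\rangle = \int z^{P(n)}\alpha\bar\beta\,d\sigma_y = \int z^{P(n)}(\alpha\bar\beta h_0)\,d\sigma_w.$$
Since $\alpha\bar\beta\in L^1(\sigma_y)$, the function $\alpha\bar\beta h_0$ lies in $L^1(\sigma_w)$, so the functional dichotomy applies and yields
$$p\text{-}\!\lim_{n\in\mathbb{N}}\langle U^{P(n)} u,v\rangle\in\{0,\langle u,v\rangle\},$$
with the \emph{same} choice of value for all $u,v\in\mathcal{F}$. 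This is~(\ref{qw1}) on $\mathcal{F}$, with the values matching those on $\mathcal{H}_k^{(N)}$.

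The main obstacle is the passage in step (a) from the operator dichotomy~(\ref{qw1}) on $\mathcal{H}_k^{(N)}$ to the functional dichotomy for $T_P$; once this is secured, the Radon--Nikodym transfer to $\mathcal{F}$ and the concluding applications of~(\ref{doda1}) and~(\ref{qw2}) are formal.
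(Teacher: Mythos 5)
Your argument is correct and follows exactly the route the paper intends: the paper proves this corollary only by remarking that it is ``a direct consequence of~(\ref{doda1})'', and your proof is precisely the natural fleshing-out of that remark --- verify~(\ref{qw1}) on the cyclic subspace $\mathcal{F}$ generated by $y$ via the spectral theorem and the hypothesis $\sigma_y\ll\sigma_k$, apply~(\ref{doda1}) to get $\mathcal{F}\subset\mathcal{H}^{(N)}_l$, and use~(\ref{qw2}) to conclude $l=k$. The ``functional dichotomy'' step is sound (indeed every $h\in L^1(\sigma_w)$ is already of the form $f\bar g$ with $f,g\in L^2(\sigma_w)$, so no density argument is even needed), and the rest is as formal as you say.
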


\begin{Remark}\label{decmin} It also follows from~(\ref{doda1}) that the decomposition in
Theorem~A is minimal in the
sense that any decomposition satisfying~(\ref{qw1}) must be a
refinement of the one of that theorem.

By the minimality of the decomposition in Theorem~A, it follows that if $1\leq M<N$ and
$$
{\cal H}=\bigoplus_{l\geq1} {\cal H}^{(M)}_l=\bigoplus_{k\geq1}
{\cal H}^{(N)}_k$$ are decompositions given by
Theorem~A for $M$ and $N$ respectively then for each
$k\geq1$ there exists a unique $l_k\geq1$ such that ${\cal
H}^{(N)}_k\subset{\cal H}^{(M)}_{l_k}$.\end{Remark}

\vspace{2ex}

\begin{Cor}\label{misg} Assume that $G$ is  a finitely generated
subgroup of the group of all polynomials $P\in{\cal P}$. Let
$U\in{\cal U}({\cal H})$. Then there exists a unique decomposition
${\cal H}=\bigoplus_{k=1}^\infty {\cal H}^{(G)}_k$ into closed
$U$-invariant subspaces such that~(\ref{qw1}) and~(\ref{qw2}) hold
with ${\cal P}_{\leq N}$ replaced by $G$.
\end{Cor}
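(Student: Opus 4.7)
Since $G$ is finitely generated by, say, polynomials $P_1,\ldots,P_m\in\cp$, let $N:=\max\{\deg P_i:1\le i\le m\}$. Then every element of $G$ is an integer combination of the $P_i$'s and hence has degree at most $N$, so $G\subset\cp_{\leq N}$. Apply Theorem~A to $U\in\cu(\ch)$ with this value of $N$ to obtain the (unique) decomposition
$$\ch=\bigoplus_{k\ge 1}\ch_k^{(N)}$$
satisfying (\ref{qw1}) and (\ref{qw2}) for every $P\in\cp_{\leq N}$. Property (\ref{qw1}) restricted to polynomials from $G$ is then immediate since $G\subset\cp_{\leq N}$, but (\ref{qw2}) may fail for $G$ as two distinct indices $k,l$ might be separated only by some $Q\in\cp_{\leq N}\setminus G$.

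To remedy this, introduce the equivalence relation on indices $k\sim l$ iff
$$p\,\text{-}\!\lim_{n\in\N}\bigl(U|_{\ch_k^{(N)}}\bigr)^{P(n)}=p\,\text{-}\!\lim_{n\in\N}\bigl(U|_{\ch_l^{(N)}}\bigr)^{P(n)}\quad\text{for every }P\in G,$$
and let $\{E_j\}_{j\ge 1}$ denote the equivalence classes. Define
$$\ch_j^{(G)}:=\bigoplus_{k\in E_j}\ch_k^{(N)}.$$
Each $\ch_j^{(G)}$ is closed and $U$-invariant (as a sum of such subspaces), and since $p\,\text{-}\!\lim (U|_{\ch_k^{(N)}})^{P(n)}$ is the same element of $\{0,Id\}$ for every $k\in E_j$, the weak-operator $p$-limit $p\,\text{-}\!\lim(U|_{\ch_j^{(G)}})^{P(n)}$ equals that common value, which establishes (\ref{qw1}) for $G$. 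Property (\ref{qw2}) for $G$ holds by construction: for $j\neq j'$ any pair $k\in E_j$, $k'\in E_{j'}$ is $G$-inequivalent, so some $Q\in G$ distinguishes them.

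For uniqueness, I would repeat verbatim the argument producing (\ref{doda1}) from Section~\ref{poldec2013}, reading $G$ in place of $\cp_{\leq N}$: assume $\cf$ is a closed $U$-invariant subspace on which (\ref{qw1}) holds for every $P\in G$, and suppose for contradiction $\cf$ is not contained in any single $\ch_j^{(G)}$. Then some $x\in\cf$ has non-zero components $x_{j_1},x_{j_2}$ in two distinct $\ch_{j_1}^{(G)}$, $\ch_{j_2}^{(G)}$, and (\ref{qw2}) for $G$ provides $Q\in G$ distinguishing them. Whichever of $0$ or $Id$ is the value of $p\,\text{-}\!\lim (U|_\cf)^{Q(n)}$, applying this $p$-limit to $x$ and comparing components yields a contradiction. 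Hence any decomposition satisfying (\ref{qw1}) for $G$ is a refinement of $\{\ch_j^{(G)}\}$, and if it additionally satisfies (\ref{qw2}) for $G$ then no refinement is possible, giving uniqueness.

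There is no real obstacle here beyond the bookkeeping: the only substantive observation is that finite generation of $G$ forces a uniform degree bound, which reduces everything to Theorem~A applied with that $N$; the rest is the standard merging and the same argument as (\ref{doda1}).
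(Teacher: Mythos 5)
Your proof is correct and follows essentially the same route as the paper: the paper also notes that finite generation forces $G\subset\cp_{\leq N}$ for some $N$, applies Theorem~A, and then glues together the subspaces that cannot be distinguished by polynomials from $G$, with uniqueness coming from the same argument as~(\ref{doda1}). Your write-up merely spells out the merging and uniqueness steps in more detail than the paper does.
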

\begin{proof}Since $G$ is finitely generated, $G\subset
{\cal P}_{\leq N}$ for some $N\geq1$. Apply
Theorem~A to obtain a decomposition for
which~(\ref{qw1}) is satisfied when ${\cal P}_{\leq N}$ is
replaced by $G$ and then glue up subspaces which cannot be
distinguished by polynomials belonging do $G$ (similarly to the
proof of Theorem~A to obtain the required
decomposition).\end{proof}

\noindent {\bf Open problem:} We have been unable to decide whether the
decomposition described in Theorem~A exists for
arbitrary infinite family of polynomials in $\cal P$. We conjecture
that the answer is negative already for the family of all
monomials.

\subsection{Proof of Theorem~A}

Our proof is based on two results: Theorem~\ref{tw5wstep}  and the following lemma.

\begin{Lemma}[\cite{BM1}]\label{vdC1} Assume that
$(x_n)\subset {\cal H}$ is bounded. If
$$
p\,\text{-}\!\lim_{h\in\N}\left(p\,\text{-}\!\lim_{n\in\N}\langle
x_{n+h},x_n\rangle\right)=0$$ then $p\,\text{-}\!\lim_{n\in\N} x_n=0$.
\end{Lemma}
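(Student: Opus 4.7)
Since $(x_n)$ is bounded, it lies in a weakly compact, metrizable ball of $\ch$, so $y := p\,\text{-}\!\lim_n x_n$ exists uniquely in the weak topology, and the task reduces to showing $\|y\|^2 = 0$. Weak continuity of the inner product on bounded sets gives $\|y\|^2 = \langle y, y\rangle = p\,\text{-}\!\lim_n \langle x_n, y\rangle$, and then Lemma~\ref{plimit-idemp} (with $p + p = p$) applied to the bounded scalar sequence $n \mapsto \langle x_n, y\rangle$ yields
$$
\|y\|^2 = p\,\text{-}\!\lim_h p\,\text{-}\!\lim_n \langle x_{n+h}, y\rangle.
$$
The plan is to replace $y$ on the right-hand side by $x_n$ so as to match the hypothesis $p\,\text{-}\!\lim_h p\,\text{-}\!\lim_n \langle x_{n+h}, x_n\rangle = 0$, which would immediately force $\|y\|^2 = 0$.

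A direct manipulation is blocked because iterated $p$-limits do not commute in general, and because weak convergence of two vector sequences does not preserve inner products. The plan is therefore to pass to IP-convergence via Lemma~\ref{plimit}. From the hypothesis, part~(ii) of that lemma produces an IP-set $\mathrm{FS}((h_i))$ along which $\phi(h) := p\,\text{-}\!\lim_n \langle x_{n+h}, x_n\rangle$ IP-converges to $0$; for each $\alpha$, a further application of Lemma~\ref{plimit}(ii) produces an IP-set $\mathrm{FS}((n_j^{(\alpha)}))$ along which $\langle x_{n+h_\alpha}, x_n\rangle$ IP-converges to $\phi(h_\alpha)$; and Lemma~\ref{plimit}(ii) applied to $y = p\,\text{-}\!\lim_n x_n$ itself produces an IP-set along which $x_{n_\alpha}$ IP-converges weakly to $y$. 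A diagonal construction then refines all these IP-sets into a single sub-IP-set $\mathrm{FS}((n_j))$ on which simultaneously (a) $x_{n_\alpha} \to y$ weakly in the IP sense, and (b) the joint correlations $\langle x_{n_\beta + h_\alpha}, x_{n_\beta}\rangle$ tend to $0$ as $\min \beta > \max \alpha \to \infty$.

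The classical Hilbert-space IP van der Corput lemma of Furstenberg--Katznelson, applied to the refined sequence along $(n_j)$, then yields $\mathrm{IP\text{-}lim}_\beta\, x_{n_\beta} = 0$ weakly; combined with (a) and uniqueness of IP-limits, this forces $y = 0$, as required. The main obstacle is precisely the diagonal extraction producing the single IP-set $(n_j)$: since the $n$-IP-set controlling $\langle x_{n+h_\alpha}, x_n\rangle$ a priori depends on $\alpha$, one must carefully thread a single IP-set that works uniformly in $\alpha$ and supports the joint correlation convergence needed by the classical IP vdC lemma. This bookkeeping, while a standard technique in Bergelson--McCutcheon-style IP-analysis, is the technical heart of the argument.
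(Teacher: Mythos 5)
First, a point of reference: the paper does not prove this lemma at all --- it is quoted from \cite{BM1} --- so your argument has to stand on its own. Your preliminary steps are fine (existence of the weak limit $y=p\,\text{-}\!\lim_n x_n$, the identity $\|y\|^2=p\,\text{-}\!\lim_h p\,\text{-}\!\lim_n\langle x_{n+h},y\rangle$ via Lemma~\ref{plimit-idemp}, and the correct diagnosis that one cannot simply swap $y$ for $x_n$ because the inner product is not jointly weakly continuous). The problem is that the step you defer as ``standard bookkeeping'' is not bookkeeping; it is the entire proof, and in the form you describe it does not go through. The Furstenberg--Katznelson IP van der Corput lemma applies to an IP-system $(z_\beta)=(x_{n_\beta})$ and requires control of the correlations $\langle z_{\alpha\cup\beta},z_\beta\rangle=\langle x_{n_\alpha+n_\beta},x_{n_\beta}\rangle$; that is, the shift must be drawn from the \emph{same} IP-set as the base point. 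Your construction produces two unrelated IP-sets, ${\rm FS}((h_i))$ for the shifts and ${\rm FS}((n_j))$ for the base points, and the quantities you control, $\langle x_{n_\beta+h_\alpha},x_{n_\beta}\rangle$, involve vectors $x_{n_\beta+h_\alpha}$ that are not terms of the IP-system $(z_\beta)$ at all. No diagonal refinement of the $n$-sets repairs this mismatch. A related trap: for an idempotent $p$ and $B\in p$ one does \emph{not} have $B-m\in p$ for a fixed shift $m$ (e.g.\ $k\N\in p$ by~(\ref{u1}) but $k\N-1\notin p$), so one cannot translate the $p$-large sets of good $n$'s by the chosen $h$'s after the fact; the compatibility of shifts and base points must be built in from the start.

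The correct route is to construct a single IP-set directly from the idempotence of $p$ by the Galvin--Glazer mechanism: for each $\vep>0$ the set $A_\vep=\{h:\,|p\,\text{-}\!\lim_n\langle x_{n+h},x_n\rangle|<\vep\}$ lies in $p$, and for $h\in A_\vep$ so does $B_{h,\vep}=\{n:\,|\langle x_{n+h},x_n\rangle|<2\vep\}$; using $A\in p\Rightarrow\{n:\,A-n\in p\}\in p$ one chooses $n_1<n_2<\ldots$ so that \emph{all} finite sums lie in the relevant sets $A_\vep$, in the appropriately shifted sets $B_{h,\vep}$, and in the $p$-large sets witnessing $x_n\to y$ weakly and $\langle y_h,y\rangle\to\|y\|^2$ (where $y_h=p\,\text{-}\!\lim_n x_{n+h}$ weakly). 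With a single such IP-set in hand one either applies the Furstenberg--Katznelson lemma to $(x_{n_\beta})$, or, more self-containedly, takes $H_i=n_1+\cdots+n_i$ and uses the elementary almost-orthogonality bound $\bigl\|\frac1k\sum_{i=1}^k v_i\bigr\|^2\leq C^2/k+2\vep$ for vectors with pairwise correlations below $2\vep$, which after pairing against $y$ yields $\|y\|^2\leq\vep+\|y\|\left(C^2/k+2\vep\right)^{1/2}$ and hence $y=0$. As written, your proposal names the right ingredients but leaves unexecuted precisely the construction that constitutes the proof, and the two-IP-set formulation you do give is not in a form to which the lemma you want to quote applies.
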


Let us start the proof with verifying the validity of Theorem~A for $N=1$.
Denote by ${\cal H}_1$ the subspace uniquely determined by
$p\,\text{-}\!\lim_{n\in\N} U^n={\rm proj}_{{\cal H}_1}$ (see
Theorem~\ref{tw5wstep}). Then ${\cal H}={\cal H}_0\oplus{\cal H}_1$
with both subspaces being $U$-invariant. Moreover,
$p\,\text{-}\!\lim_{n\in\N} \left(U|_{{\cal H}_0}\right)^n=0$ and
$p\,\text{-}\!\lim_{n\in\N} \left(U|_{{\cal H}_1}\right)^{n}=Id$. The latter equality implies
$$
p\,\text{-}\!\lim_{n\in\N} \left(U|_{{\cal
H}_1}\right)^{kn}=Id\;\;\mbox{for each}\;\;k\geq1.$$ We now apply the same argument
to $\left(U|_{{\cal H}_0}\right)^2$. We obtain a
decomposition ${\cal H}_0={\cal H}_{00}\oplus{\cal H}_{01}$ such
that $p\,\text{-}\!\lim_{n\in\N} \left(U|_{{\cal H}_{01}}\right)^{2n}=Id$
and $p\,\text{-}\!\lim_{n\in\N} \left(U|_{{\cal H}_{00}}\right)^{2n}=0$.
It follows that
$$
p\,\text{-}\!\lim_{n\in\N} \left(U|_{{\cal H}_{01}}\right)^{n}=0,
\;p\,\text{-}\!\lim_{n\in\N} \left(U|_{{\cal H}_{01}}\right)^{2n}=Id.$$
These two conditions determine the $p$-limits of
$\left(U^{kn}|_{{\cal H}_{01}}\right)_n$ for each $k\geq1$.
Indeed, if $k=2l$ then $p\,\text{-}\!\lim_{n\in\N} \left(U|_{{\cal
H}_{01}}\right)^{2ln}=Id$ and if $k=2l+1$ then by
Lemma~\ref{plimit-ciaglosc}
$$
p\,\text{-}\!\lim_{n\in\N} \left(U|_{{\cal
H}_{01}}\right)^{kn}=p\,\text{-}\!\lim_{n\in\N} \left(U|_{{\cal
H}_{01}}\right)^{2ln+n}=Id\circ 0=0.$$ On $H_{00}$ we have
$p\,\text{-}\!\lim_{n\in\N} \left(U|_{{\cal H}_{00}}\right)^{in}=0$ for
$i=1,2$. Applying the above argument  again  to
$\left(U|_{H_{00}}\right)^3$, we obtain  ${\cal H}_{00}={\cal
H}_{000}\oplus{\cal H}_{001}$ with
$$
p\,\text{-}\!\lim_{n\in\N} \left(U|_{{\cal H}_{001}}\right)^{3ln}=Id,\;
p\,\text{-}\!\lim_{n\in\N} \left(U|_{{\cal H}_{001}}\right)^{(3l+1)n}=0$$
and
$$p\,\text{-}\!\lim_{n\in\N} \left(U|_{{\cal H}_{001}}\right)^{(3l+2)n}=0$$ for
each $l\geq0$. On $H_{000}$ we have $p\,\text{-}\!\lim_{n\in\N}
\left(U|_{{\cal H}_{000}}\right)^{in}=0$ for $i=1,2,3$.  Continuing in this fashion, we will obtain a sequence
$$
{\cal H}_0\supset\ldots \supset{\cal H}_{0^k1}\supset {\cal
H}_{0^{k+1}1}\supset\ldots$$
of nested closed $U$-invariant subspaces, where
 $0^k1$ stands for
$\underbrace{0\ldots0}_k1$, $k\geq1$. Let ${\cal
H}_\infty=\left(\bigoplus_{k\geq0}{\cal H}_{0^k1}\right)^\perp$.
By construction, $U|_{{\cal H}_\infty}$ is totally
$p$-mixing\footnote{A unitary operator $V\in{\cal U}({\cal G})$ is
called {\em totally $p$-mixing} if $p\,\text{-}\!\lim_{n\in\N}V^{ln}=0$
for each $l\geq1$.} on ${\cal H}_\infty$ (indeed, if
$p\,\text{-}\!\lim_{n\in\N} \left(U|_{{\cal H}_\infty}\right)^n={\rm
proj}_{\cal F}$ then ${\cal F}\subset {\cal H}_1$ whence ${\cal
F}=\{0\}$; if $p\,\text{-}\!\lim_{n\in\N} \left(U|_{{\cal
H}_\infty}\right)^n=0$ and if $p\,\text{-}\!\lim_{n\in\N} \left(U|_{{\cal
H}_\infty}\right)^{2n}={\rm proj}_{\cal F}$ then ${\cal
F}\subset{\cal H}_{01}$ whence ${\cal F}=\{0\}$, etc.). We have
proved the following.
\begin{Lemma}\label{pd1} If $U\in{\cal U}({\cal H})$ and $p\in
E(\beta\N)$ then there exists a  decomposition
$$
{\cal H}={\cal H}_\infty\oplus\bigoplus_{k=0}^\infty {\cal
H}_{0^k1}$$ into $U$-invariant subspaces such that $U$ is totally
$p$-mixing on ${\cal H}_\infty$ and
$$
p\,\text{-}\!\lim_{n\in\N} \left(U|_{{\cal H}_{0^k1}}\right)^{rn}=
\left\{\begin{array}{lll} Id & \mbox{if} & (k+1)|r\\
0&\mbox{ \ } &\mbox{otherwise.}\end{array}\right.$$
\end{Lemma}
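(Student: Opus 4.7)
\textbf{Proof plan for Lemma \ref{pd1}.} The idea is to iteratively apply Theorem~\ref{tw5wstep} to successive powers of $U$, peeling off one subspace at each stage where a new periodicity appears, and to use the semigroup behavior of $p$-limits from Lemma~\ref{plimit-ciaglosc} to propagate identities under multiplication by arbitrary~$r$.

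First, I would apply Theorem~\ref{tw5wstep} with $P(n) = n$ to obtain $p\,\text{-}\!\lim_{n\in\N} U^n = {\rm proj}_{\mathcal{F}}$ for some closed, $U$-invariant subspace $\mathcal{F}$; set $\mathcal{H}_{0^01} := \mathcal{F}$ and let $\mathcal{H}_0$ be its orthogonal complement. On $\mathcal{H}_{0^01}$, since $p\,\text{-}\!\lim_{n\in\N} (U|_{\mathcal{H}_{0^01}})^n = Id$ and the unit ball in operators is a semitopological semigroup satisfying~(\ref{add1}) when we restrict to contractions lying over the image of $\overline{\{U^n\}}$, Lemma~\ref{plimit-ciaglosc} yields $p\,\text{-}\!\lim (U|_{\mathcal{H}_{0^01}})^{rn} = Id$ for every $r\geq 1$. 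On $\mathcal{H}_0$, the operator $U$ satisfies $p\,\text{-}\!\lim (U|_{\mathcal{H}_0})^n = 0$.

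Next, assume inductively that we have built nested $U$-invariant subspaces $\mathcal{H}_0 \supset \mathcal{H}_{00} \supset \cdots \supset \mathcal{H}_{0^k}$ together with complements $\mathcal{H}_{0^{j-1}1}$ at each stage, such that on $\mathcal{H}_{0^k}$ we have $p\,\text{-}\!\lim (U|_{\mathcal{H}_{0^k}})^{in} = 0$ for $i = 1, \ldots, k$. Apply Theorem~\ref{tw5wstep} with $P(n) = (k+1)n$ to $U|_{\mathcal{H}_{0^k}}$ to split $\mathcal{H}_{0^k} = \mathcal{H}_{0^{k+1}} \oplus \mathcal{H}_{0^k1}$ where $p\,\text{-}\!\lim (U|_{\mathcal{H}_{0^k1}})^{(k+1)n} = Id$ and $p\,\text{-}\!\lim (U|_{\mathcal{H}_{0^{k+1}}})^{(k+1)n} = 0$. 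To compute $p\,\text{-}\!\lim (U|_{\mathcal{H}_{0^k1}})^{rn}$ for arbitrary $r$, write $r = (k+1)l + i$ with $0 \leq i \leq k$. Using Lemma~\ref{plimit-ciaglosc} and the fact that $Id \in I(S)$ in the semigroup $S$ generated by $U|_{\mathcal{H}_{0^k1}}$, the product factors as $p\,\text{-}\!\lim (U|_{\mathcal{H}_{0^k1}})^{(k+1)ln} \cdot p\,\text{-}\!\lim (U|_{\mathcal{H}_{0^k1}})^{in} = Id \cdot p\,\text{-}\!\lim (U|_{\mathcal{H}_{0^k1}})^{in}$, which is $Id$ when $i = 0$ and $0$ otherwise (since $p\,\text{-}\!\lim U^{in} = 0$ on $\mathcal{H}_{0^{k}} \supset \mathcal{H}_{0^k1}$ for $i = 1, \ldots, k$).

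Finally, set $\mathcal{H}_\infty = \bigl(\bigoplus_{k \geq 0} \mathcal{H}_{0^k1}\bigr)^\perp$. Then $\mathcal{H}_\infty$ is $U$-invariant as the orthogonal complement of a direct sum of $U$-invariant subspaces. To see total $p$-mixing on $\mathcal{H}_\infty$, suppose for contradiction that $p\,\text{-}\!\lim (U|_{\mathcal{H}_\infty})^{ln} = {\rm proj}_{\mathcal{G}}$ for some $l \geq 1$ with $\mathcal{G} \neq \{0\}$. Choose the minimal such $l$; then applying Theorem~\ref{tw5wstep} and comparing $p$-limits of $U^n, U^{2n}, \ldots, U^{ln}$ on $\mathcal{G}$ forces $\mathcal{G} \subset \mathcal{H}_{0^{l-1}1}$, contradicting $\mathcal{G} \subset \mathcal{H}_\infty$.

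The main obstacle I anticipate is the careful bookkeeping needed to propagate from ``$p\,\text{-}\!\lim U^{(k+1)n} = Id$ on $\mathcal{H}_{0^k1}$'' to the full periodicity statement ``$p\,\text{-}\!\lim U^{rn}$ is $Id$ or $0$ according to whether $(k+1) \mid r$,'' since this requires Lemma~\ref{plimit-ciaglosc} to be applicable, which in turn requires identifying the right semitopological semigroup satisfying~(\ref{add1}) in which $Id$ lies in $I(S)$. The totally $p$-mixing property of $U|_{\mathcal{H}_\infty}$ is the other delicate point, demanding an induction on the period to rule out any nonzero invariant image.
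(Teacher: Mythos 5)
Your proposal is correct and follows essentially the same route as the paper: iterate Theorem~\ref{tw5wstep} applied to $U^{(k+1)n}$ on the part where $p\,\text{-}\!\lim U^{in}=0$ for $i\le k$, use Lemma~\ref{plimit-ciaglosc} (with $Id\in I(S)$) to evaluate $p\,\text{-}\!\lim U^{rn}$ via $r=(k+1)l+i$, and take $\mathcal{H}_\infty$ as the orthocomplement, ruling out a nonzero projection there by locating its image inside some $\mathcal{H}_{0^{l-1}1}$. This matches the paper's proof in both structure and the handling of the two delicate points you flag.
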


The following result is needed to complete the inductive  proof of the case $N=1$.

\begin{Lemma}\label{vitaly} Let $p\in E(\beta\N)$. Assume
that $U\in{\cal U}({\cal H})$ and $k\geq1$. Assume moreover that
for some $r_j\in\N$ \beq\label{uu1} p\,\text{-}\!\lim_{n\in\N}
U^{r_jn^j}=Id\eeq for
%$s=1,\ldots,r_j-1$,
$j=1,\ldots,k-1$ and
\beq\label{uu2} p\,\text{-}\!\lim_{n\in\N} U^{ln^k}=0\;\;\mbox{for
all}\;\;l\geq1. \eeq Then \beq\label{uu3} p\,\text{-}\!\lim_{n\in\N}
U^{P(n)}=0\eeq for all polynomials $P\in{\cal P}_{\geq k}$.
\end{Lemma}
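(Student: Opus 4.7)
The plan is to prove the lemma by induction on $d := {\rm deg}\, P$, for $P \in \cp_{\geq k}$. The cases $d > k$ will be handled uniformly by the $p$-limit van der Corput Lemma~\ref{vdC1}, while the base case $d = k$ requires a more delicate structural argument; this base case is where I expect the main obstacle.

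For the inductive step ($d > k$), I would fix $f \in \ch$ and set $x_n := U^{P(n)}f$. By Lemma~\ref{dop}, setting $\widetilde{Q}_h(n) := P(n+h) - P(n) - P(h)$ yields $\widetilde{Q}_h \in \cp$ with ${\rm deg}_n \widetilde{Q}_h = d - 1 \geq k$. The inductive hypothesis applied to each $\widetilde{Q}_h$ gives $p\,\text{-}\!\lim_{n\in\N} U^{\widetilde{Q}_h(n)} = 0$. Since $U^{P(h)} \in I(S)$, Lemma~\ref{plimit-ciaglosc} then yields $p\,\text{-}\!\lim_{n\in\N} U^{P(n+h) - P(n)} = 0$, hence $p\,\text{-}\!\lim_{n\in\N} \langle x_{n+h}, x_n \rangle = 0$ for every $h$. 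Lemma~\ref{vdC1} then gives $p\,\text{-}\!\lim_{n\in\N} U^{P(n)} f = 0$, and since $f \in \ch$ was arbitrary, $p\,\text{-}\!\lim_{n\in\N} U^{P(n)} = 0$ in the weak operator topology.

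For the base case ($d = k$), write $P(n) = a_k n^k + R(n)$ with $a_k \neq 0$ and ${\rm deg}\, R \leq k - 1$. Note that van der Corput alone is insufficient here, as $\widetilde{Q}_h$ would have $n$-degree $k - 1 < k$ and so would lie outside the scope of the induction. Let $F := p\,\text{-}\!\lim_{n\in\N} U^{R(n)}$ (an idempotent by Lemma~\ref{idempotent}) and decompose $\ch = \mathrm{range}(F) \oplus \ker(F)$. On $\mathrm{range}(F)$, for $f$ with $Ff = f$ the (strong) rigid convergence $U^{R(n)}f \to f$ gives $U^{P(n)}f = U^{a_k n^k}(f + \epsilon_n)$ with $\|\epsilon_n\| \to 0$ along $p$, whence $p\,\text{-}\!\lim_{n\in\N} U^{P(n)} f = 0$ by the hypothesis $p\,\text{-}\!\lim U^{a_k n^k} = 0$; that is, $E|_{\mathrm{range}(F)} = 0$, where $E := p\,\text{-}\!\lim_{n\in\N} U^{P(n)}$.

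The main obstacle is the analysis on $\ker(F)$, where both $U^{a_k n^k}$ and $U^{R(n)}$ converge only weakly to $0$ and separate continuity of WOT multiplication fails to yield the conclusion directly. To overcome this I would exploit the rigidity hypotheses $p\,\text{-}\!\lim U^{r_j n^j} = Id$ for $j = 1, \ldots, k-1$: by strong convergence and Lemma~\ref{plimit-ciaglosc}, these extend to $p\,\text{-}\!\lim U^{s r_j n^j} = Id$ for every $s \in \Z$, and iterating gives $p\,\text{-}\!\lim U^{Q(n)} = Id$ for any $Q \in \cp_{\leq k-1}$ whose coefficient of $n^j$ is divisible by $r_j$. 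Combined with a further decomposition of $\ker(F)$---a polynomial refinement in the spirit of Lemma~\ref{pd1}---and using crucially that the mixing hypothesis $p\,\text{-}\!\lim U^{l n^k} = 0$ holds for \emph{every} $l \geq 1$, one should isolate the $a_k n^k$ contribution and conclude $E|_{\ker(F)} = 0$, completing the proof.
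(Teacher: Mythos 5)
Your inductive step for ${\rm deg}\,P=d>k$ is essentially the paper's argument (van der Corput via Lemma~\ref{vdC1} together with Lemma~\ref{dop}), and it is correct modulo the finitely many $h$ for which the leading coefficient of $\widetilde{Q}_h$ vanishes, which do not affect the outer $p$-limit since an idempotent ultrafilter is non-principal. The problem is the base case $d=k$, which you explicitly leave open on $\ker(F)$: there both $U^{a_kn^k}$ and $U^{R(n)}$ tend weakly to $0$ along $p$, and no amount of further splitting of $\ker(F)$ ``in the spirit of Lemma~\ref{pd1}'' will let you multiply two weakly-null limits; this is not a technical obstacle you can defer, it is the entire content of the degree-$k$ case. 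Your decomposition by $F=p\,\text{-}\!\lim_{n}U^{R(n)}$ is in fact a detour that the paper does not take and that leads nowhere on $\ker(F)$.

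The missing idea is to apply the projection property to $p\,\text{-}\!\lim_{n}U^{P(n)}$ \emph{itself} rather than to $U^{R(n)}$. By Theorem~\ref{tw5wstep} (or Lemma~\ref{idempotent}, whose hypotheses are exactly~(\ref{uu1})), $p\,\text{-}\!\lim_{n}U^{P(n)}={\rm proj}_{\cal F}$ for a closed $U$-invariant subspace $\cal F$. If ${\cal F}\neq\{0\}$, restrict to $\cal F$ (where (\ref{uu1}) and (\ref{uu2}) still hold) so that the limit becomes $Id$; then $p\,\text{-}\!\lim_{n}U^{rP(n)}=Id$ with $r={\rm lcm}(r_1,\ldots,r_{k-1})$, and since each lower-order coefficient of $rP$ is a multiple of the corresponding $r_j$, Lemma~\ref{plimit-ciaglosc} lets you cancel all terms of degree $<k$ (their limits are the invertible element $Id$) and conclude $p\,\text{-}\!\lim_{n}U^{ra_kn^k}=Id$ on $\cal F$, contradicting~(\ref{uu2}). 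Hence ${\cal F}=\{0\}$ and the base case holds. Note that this is also precisely the mechanism that makes your observation $p\,\text{-}\!\lim U^{P(n)}f=0$ on ${\rm range}(F)$ superfluous: once the contradiction argument is available, the splitting by $F$ is not needed at all.
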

\begin{proof} Let $y\in {\cal H}$ and let
$P\in{\cal P}$ be a polynomial of degree~$k+1$.  By Lemma~\ref{dop}, we have
$$
\langle U^{P(n+h)}y, U^{P(n)}y\rangle =\langle
U^{Q(n,h)}y,U^{-P(h)}y \rangle,$$ where the $x$-degree of
$Q$ is $k$ and $xy$ divides $Q(x,y)$. Therefore, in view
of~(\ref{uu1}) and (\ref{uu2}), whenever we fix $h\in\N$ such that
${\rm lcm}(r_1,\ldots,r_{k-1})|h$ then
$$
p\,\text{-}\!\lim_{n\in\N}U^{Q(n,h)}y=0.$$
 Hence, using~(\ref{u2}), we obtain
$$
p\,\text{-}\!\lim_{h\in\N}\left(p\,\text{-}\!\lim_{n\in\N}\langle
U^{P(n+h)}y,U^{P(n)}y\rangle\right)=
p\,\text{-}\!\lim_{h\in\N}\left(p\,\text{-}\!\lim_{n\in\N} \langle
U^{Q(n,h)}y,U^{-P(h)}y \rangle \right)=$$$$
p\,\text{-}\!\lim_{{\rm lcm}(r_1,\ldots,r_{k-1})|h}\left(p\,\text{-}\!\lim_{n\in\N}
\langle U^{Q(n,h)}y,U^{-P(h)}y \rangle \right)=0.
$$
 By Lemma~\ref{vdC1},
$p\,\text{-}\!\lim_{n\in\N} U^{P(n)}y=0$. Since $y\in\ch$ was arbitrary,
$p\,\text{-}\!\lim_{n\in\N} U^{P(n)}=0$ for all polynomials $P\in\cp$ of degree~$k+1$.

To obtain the result for all polynomials $P\in{\cal P}_{\geq
k+1}$, we use induction and Lemma~\ref{vdC1}. Indeed, if $P$ has
degree~$d\geq k+2$ then
$$
p\,\text{-}\!\lim_{h\in\N}\left(p\,\text{-}\!\lim_{n\in\N} \langle
U^{P(n+h)-P(n)}y,y\rangle\right)=$$$$
p\,\text{-}\!\lim_{h\in\N}\left(p\,\text{-}\!\lim_{n\in\N} \langle
U^{Q(n,h)}y,U^{-P(h)}y\rangle\right)=0$$ because the $x$-degree of
$Q(x,h)$ is $d-1$ and by the induction assumption
$$p\,\text{-}\!\lim_{n\in\N} U^{Q(n,h)}=0.$$ We have proved~(\ref{uu3})
for all $P\in{\cal P}_{\geq k+1}$.

To complete the proof, it remains to prove~(\ref{uu3}) for polynomials of degree~$k$. Suppose  that for some $Q(x)=s_1x+s_2x^2+\ldots +s_kx^k\in{\cal
P}$ with $s_k\geq 1$ we have $p\,\text{-}\!\lim_{n\in\N} U^{Q(n)}={\rm
proj}_{\cal F}$ with ${\cal F}\neq\{0\}$. Since~(\ref{uu1})
and~(\ref{uu2}) hold for $U|_{\cal F}$, by replacing $\cal H$ by
$\cal F$ if necessary, we can assume that
\beq\label{uuuu}p\,\text{-}\!\lim_{n\in\N} U^{Q(n)}=Id.\eeq Then
$p\,\text{-}\!\lim_{n\in\N} U^{rQ(n)}=Id$, where $r={\rm lcm}(r_1,\ldots,
r_{k-1})$. By putting together~(\ref{uuuu}) and~(\ref{uu1}) we
obtain that $p\,\text{-}\!\lim_{n\in\N} U^{rs_kn^k}=Id$, contrary
to~(\ref{uu2}).
\end{proof}

\vspace{2ex} The proof
of Theorem~A for $N=1$ is now completed  by invoking Lemmas~\ref{pd1} and~\ref{vitaly}.

Assume now that Theorem~A holds for some $N\geq1$. We
will prove it for $N+1$. To this end pick a subspace ${\cal
H}_k^{(N)}$. Two cases can arise.

\vspace{2ex}

 {\bf Case 1.} Assume there exists $Q\in{\cal P}_{\leq
N}$ such that $p\,\text{-}\!\lim_{n\in\N}\left(U|_{{\cal
H}^{(N)}_k}\right)^{sQ(n)}=0$ for each $s\geq1$. In this case we
apply the induction assumption (the property~(\ref{qw3})) to
conclude that ${\cal H}_{k}^{(N)}$ will also be one of the
subspaces  in the decomposition~(\ref{dcm}) with $N$ replaced by
$N+1$ (indeed, $p\,\text{-}\!\lim_{n\in\N}\left(U|_{{\cal
H}^{(N)}_k}\right)^{R(n)}=0$ for all $R\in{\cal P}_{\geq N+1}$).

\vspace{2ex}

 {\bf Case 2.} There exist $a_1,\ldots,a_N\geq1$ for
which we have \beq\label{dz1}\begin{array}{l} p\,\text{-}\!\lim_{n\in\N}
\left(U^{in^s}|_{{\cal H}^{(N)}_{k}}\right)=0\;\;\mbox{for
each}\;\;i=1,\ldots, a_s-1,\; \mbox{and}\\p\,\text{-}\!\lim_{n\in\N}
\left(U^{a_sn^s}|_{{\cal H}^{(N)}_{k}}\right)=Id\end{array}\eeq
for $s=1,\ldots, N$.

We now repeat verbatim the proof of Lemma~\ref{pd1} for
$U=U|_{{\cal H}_k^{(N)}}$ and for the family $\{nx^{N+1}:\:n\geq1\}$
instead of the family of linear polynomials.  This yields the
decomposition \beq\label{dz2} {\cal H}_k^{(N)}={\cal
F}_\infty\oplus\bigoplus_{l=0}^\infty {\cal F}_{0^l1},\eeq where
\beq\label{dz3}
 p\,\text{-}\!\lim_{n\in\N}
\left(U^{in^{N+1}}|_{{\cal F}_\infty}\right)=0\;\;\mbox{for
each}\;\;i\geq1\eeq and \beq\label{dz4}\begin{array}{l}
 p\,\text{-}\!\lim_{n\in\N}
\left(U^{jn^{N+1}}|_{{\cal F}_{0^l1}}\right)=0\;\;\mbox{for
each}\;\;j=1,\ldots,l,\\ p\,\text{-}\!\lim_{n\in\N}
\left(U^{(l+1)n^{N+1}}|_{{\cal
F}_{0^l1}}\right)=Id.\end{array}\eeq

In view of Lemma~\ref{vitaly}, the space ${\cal F}_\infty$ will be
one of the subspaces ${\cal H}^{(N+1)}_r$ in the
decomposition~(\ref{dcm}) for $N+1$. The subspaces ${\cal
F}_{0^l1}$ are (in general) not yet elements of the
decomposition~(\ref{dcm}) for $N+1$ because we know only the
$p$-limits for all $P\in{\cal P}_{\leq N}$ and all monomials
$jn^{N+1}$ for $j\geq1$ (the latter follows by the same argument
as in the proof of Lemma~\ref{pd1}).  Take
$Q(x)=b_1x+\ldots+b_Nx^N+b_{N+1}x^{N+1}$, where
\beq\label{dz5}\mbox{$0\leq b_i<a_i$ for $i=1,\ldots,N$ and $1\leq
b_{N+1}< l+1$.}\eeq Using Lemma~\ref{pd1}, we obtain a
decomposition ${\cal F}_{0^l1}={\cal G}_0\oplus {\cal G}_1$ such
that \beq\label{dz6}
 p\,\text{-}\!\lim_{n\in\N}
\left(U^{Q(n)}|_{{\cal G}_{0}}\right)=0, \;  p\,\text{-}\!\lim_{n\in\N}
\left(U^{Q(n)}|_{{\cal G}_{1}}\right)=Id.\eeq Then we take another
$b_i$'s satisfying~(\ref{dz5}) and apply the same reasoning to
${\cal G}_0$ and ${\cal G}_1$ separately. Since the
condition~(\ref{dz5}) defines only a finite family of polynomials,
we end up with a (finite) decomposition
$${\cal F}_{0^l1}={\cal J}_1\oplus \ldots\oplus {\cal J}_K$$
having the property that for each $Q\in{\cal P}_{\leq N+1}$ whose
coefficients satisfy~(\ref{dz5}), we have $p\,\text{-}\!\lim_{n\in\N}
\left(U^{Q(n)}|_{{\cal J}_{q}}\right)=0$ or $Id$, $q=1,\ldots,K$.
But, using~(\ref{dz1}) and~(\ref{dz4}), the same conclusion holds
for all $Q\in{\cal P}_{\leq N+1}$, and we take the subspaces
${\cal J}_q$ as elements of the decomposition~(\ref{dcm})  for
$N+1$.

\vspace{2ex}

To complete the proof of Theorem~A (the
property~(\ref{qw3}) has still to be shown) we need to show that on
subspaces ${\cal J}_q$ for no polynomial $Q\in{\cal P}_{\leq N+1}$
we  have $p\,\text{-}\!\lim_{n\in\N} \left(U^{mQ(n)}|_{{\cal
J}_{q}}\right)=0$ for all $m\geq1$. This is however clear, if
$L:={\rm lcm}(a_1,\ldots,a_N,l+1)$ then $p\,\text{-}\!\lim_{n\in\N}
\left(U^{LQ(n)}|_{{\cal J}_{q}}\right)=Id$.

In this way we have completed the proof of Theorem~A.
As an immediate consequence of it (see~(\ref{qw3})), we obtain
Corollary~B.

\begin{Remark}
Our proof of Theorem~A was based on Theorem~\ref{tw5wstep}.
However it could be organized in another way.  Lemma~\ref{idempotent}  is sufficient in order to perform the first stage of the
construction (see Lemma~\ref{pd1}). Then an application of Lemma~\ref{idempotent} to the restrictions of the operator $U$ to any of
the spaces ${\cal H}_{0^k1}$ shows that
$p\,\text{-}\!\lim_{n\in\N}U^{P(n)}|_{{\cal H}_{0^k1}}$ is an orthogonal
projection for any polynomial $P\in{\cal P}$ of degree~2.
Continuing  this way, we will be able to prove Theorem~A
using only Lemma~\ref{idempotent}. Note that  Theorem~\ref{tw5wstep}
follows from Theorem~A.
\end{Remark}

\subsection{$p$-polynomial properties of Fourier transforms of
measures} We have already noted that the subspaces that appear
in~(\ref{dcm}) in Theorem~A are spectral (see Corollary~\ref{specsub}). This
suggests that the assertion of that theorem depends only on the
maximal spectral type of $U\in{\cal U}({\cal H})$. Therefore from
this theorem one can derive some harmonic analysis consequences
concerning measures on the circle. To this end, we consider a
special case of Theorem~A in which ${\cal
H}=L^2(\cir^1,\sigma)$ and $U=V_\sigma$, $V_\sigma(f)(z)=zf(z)$
with $\sigma$ a probability Borel measure on $\cir^1$.

\begin{Lemma}\label{choj1} Assume that $P\in{\cal P}_{\leq N}$.
Under the above notation, we have:\\
(i) $p\,\text{-}\!\lim_{n\in\N} V_\sigma^{P(n)}=Id$ if and only if
$p\,\text{-}\!\lim_{n\in\N}\widehat{\sigma}(P(n))=1$.\\
(ii) $p\,\text{-}\!\lim_{n\in\N} V_\sigma^{P(n)}=0$ if and only if
$p\,\text{-}\!\lim_{n\in\N}\widehat{\sigma}(P(n)+k)=0$ for each $k\in\Z$.
\end{Lemma}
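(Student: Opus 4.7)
The plan is to reduce everything to the concrete formula
\[
\langle V_\sigma^{m} z^{a}, z^{b}\rangle_{L^2(\cir^1,\sigma)}
= \int_{\cir^1} z^{m+a-b}\,d\sigma(z) = \widehat{\sigma}(m+a-b),
\qquad a,b\in\Z,\ m\in\Z,
\]
and to combine it with Theorem~\ref{tw5wstep}, which tells us that $p\,\text{-}\!\lim_{n\in\N} V_\sigma^{P(n)}$ is the orthogonal projection onto some closed $V_\sigma$-invariant subspace $\cf\subset L^2(\cir^1,\sigma)$. The other ingredient is the classical density of the trigonometric polynomials $\mathrm{span}\{z^a:a\in\Z\}$ in $L^2(\cir^1,\sigma)$.

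For the forward implications in both (i) and (ii), I would simply take $p$-limits of the inner products in the display above. In (i), pairing with $f=g=\jed$ (the constant function~$1$, which has norm~$1$) gives
$\langle V_\sigma^{P(n)}\jed,\jed\rangle=\widehat{\sigma}(P(n))$, so $p\,\text{-}\!\lim V_\sigma^{P(n)}=Id$ forces $p\,\text{-}\!\lim\widehat\sigma(P(n))=1$. In (ii), pairing $f=\jed$ with $g=z^{-k}$ gives $\langle V_\sigma^{P(n)}\jed,z^{-k}\rangle=\widehat\sigma(P(n)+k)$, so $p\,\text{-}\!\lim V_\sigma^{P(n)}=0$ forces $p\,\text{-}\!\lim\widehat\sigma(P(n)+k)=0$ for every $k\in\Z$.

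For the reverse implication in (i), assume $p\,\text{-}\!\lim\widehat\sigma(P(n))=1$ and write $p\,\text{-}\!\lim V_\sigma^{P(n)}=\mathrm{proj}_{\cf}$. Then $\langle\mathrm{proj}_{\cf}\jed,\jed\rangle=1=\|\jed\|^2$; since $\mathrm{proj}_{\cf}$ is a contraction, this is the equality case of Cauchy--Schwarz and hence $\mathrm{proj}_{\cf}\jed=\jed$, i.e.\ $\jed\in\cf$. Because $V_\sigma$ is unitary and $\cf$ is $V_\sigma$-invariant, $\cf$ is also $V_\sigma^{-1}$-invariant, so $z^m\in\cf$ for every $m\in\Z$; density of trigonometric polynomials yields $\cf=L^2(\cir^1,\sigma)$ and therefore $\mathrm{proj}_{\cf}=Id$.

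For the reverse implication in (ii), assume $p\,\text{-}\!\lim\widehat\sigma(P(n)+k)=0$ for every $k\in\Z$ and again let $\mathrm{proj}_{\cf}=p\,\text{-}\!\lim V_\sigma^{P(n)}$. For every $a,b\in\Z$,
\[
\langle \mathrm{proj}_{\cf}z^{a},z^{b}\rangle
= p\,\text{-}\!\lim_{n\in\N}\langle V_\sigma^{P(n)}z^{a},z^{b}\rangle
= p\,\text{-}\!\lim_{n\in\N}\widehat\sigma(P(n)+a-b)=0.
\]
By sesquilinearity and the density of the trigonometric polynomials in $L^2(\cir^1,\sigma)$ this gives $\langle\mathrm{proj}_{\cf}f,g\rangle=0$ for all $f,g\in L^2(\cir^1,\sigma)$, so $\mathrm{proj}_{\cf}=0$ and $\cf=\{0\}$. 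Neither direction is really the hard part here; the only point that deserves care is to invoke Theorem~\ref{tw5wstep} to guarantee that the $p$-limit really is an orthogonal projection onto an invariant subspace, without which the Cauchy--Schwarz and density arguments would not close.
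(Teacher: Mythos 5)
Your argument for part (ii) is essentially the paper's: the authors likewise reduce $p\,\text{-}\!\lim_{n\in\N} V_\sigma^{P(n)}=0$ to the vanishing of $p\,\text{-}\!\lim_{n\in\N}\langle V_\sigma^{P(n)}z^r,z^s\rangle=\widehat{\sigma}(P(n)+r-s)$ for all $r,s\in\Z$, using the density of the characters. For part (i) you take a genuinely different route: the paper does not invoke Theorem~\ref{tw5wstep} here, but instead shows directly, by the convexity argument of footnote~\ref{stopkaW}, that $p\,\text{-}\!\lim_{n\in\N}\int z^{P(n)}\,d\sigma=1$ is equivalent to $z^{P(n)}\to\jed$ in $L^2(\cir^1,\sigma)$ along $p$, which in turn is equivalent to $V_\sigma^{P(n)}\to Id$. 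Your version instead exploits the projection structure guaranteed by Theorem~\ref{tw5wstep}, the equality case in $\|{\rm proj}_{\cf}\jed\|\leq\|\jed\|$, and density; this is legitimate and has the merit of treating (i) and (ii) uniformly.

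There is, however, one step in your proof of (i) whose justification is wrong as stated: ``because $V_\sigma$ is unitary and $\cf$ is $V_\sigma$-invariant, $\cf$ is also $V_\sigma^{-1}$-invariant.'' Invariance of a closed subspace under a unitary does not imply invariance under its inverse: the Hardy space $H^2\subset L^2(\cir^1,\lambda)$ is invariant under multiplication by $z$ but $z^{-1}\cdot\jed\notin H^2$. What saves you is that $\cf$ is not an arbitrary invariant subspace; it is the range of ${\rm proj}_{\cf}=p\,\text{-}\!\lim_{n\in\N}V_\sigma^{P(n)}$, and since each $V_\sigma^{P(n)}$ commutes with $V_\sigma$ and multiplication by a fixed operator is separately continuous in the weak operator topology, ${\rm proj}_{\cf}$ commutes with $V_\sigma$, so $\cf$ is reducing, i.e.\ invariant under both $V_\sigma$ and $V_\sigma^{-1}$. (Alternatively, once $\jed\in\cf$ you can write ${\rm proj}_{\cf}z^m={\rm proj}_{\cf}V_\sigma^m\jed=V_\sigma^m{\rm proj}_{\cf}\jed=z^m$ directly.) With this one-line repair your proof closes.
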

\begin{proof}
For (i) just notice that  by a convexity argument\footnote{\label{stopkaW} If $g:\bs^1\to\bs^1$ is measurable and $|\int_{\bs^1}g\,d\sigma -1|<\vep$ then we have
$$
\left|\sigma(A)\cdot\left(\frac1{\sigma(A)}\int_Ag\,d\sigma-1\right)+
\sigma(A^c)\cdot\left(\frac1{\sigma(A^c)}\int_{A^c}g\,d\sigma-1\right)\right|<\vep,$$
where $A:=\{z\in\bs^1:\:|g(z)-1|\geq\delta\}=\{z\in\bs^1:\:{\rm Re}(g(z))\leq 1-\delta^2/2\}$.  Now, the real parts of both numbers $\frac1{\sigma(A^c)}\int_{A^c}g\,d\sigma
-1$ and $\frac1{\sigma(A)}\int_{A}g\,d\sigma
-1$ are non-positive, whence $\left|\sigma(A)\cdot\left(\frac1{\sigma(A)}\int_Ag\,d\sigma-1\right)\right|<\vep$ and
since ${\rm Re}(\frac1{\sigma(A)}\int_Ag\,d\sigma)<1-\delta^2/2$, we have $\sigma(A)\leq \vep/(\delta^2/2)$.}
$$p\,\text{-}\!\lim_{n\in\N}
\int_{\cir^1}z^{P(n)}\,d\sigma(z)=1\;\;\mbox{if and only if}\;\;
p\,\text{-}\!\lim_{n\in\N} z^{P(n)}=1$$ in $L^2(\cir^1,\sigma)$. For
(ii), use the elementary fact that $p\,\text{-}\!\lim_{n\in\N}
V_\sigma^{P(n)}=0$ if and only if for each $r,s\in\Z$
$$
p\,\text{-}\!\lim_{n\in\N}\left\langle V^{P(n)}_\sigma
z^r,z^s\right\rangle=0.$$\end{proof}

\begin{Remark}\label{choj8} Note that the condition
$p\,\text{-}\!\lim_{n\in\N}\widehat{\sigma}(P(n)+k)=0$ for each $k\in\Z$ and each $P\in {\cal P}_{\leq N}$
is equivalent to saying that
$p\,\text{-}\!\lim_{n\in\N}\widehat{\sigma}(Q(n))=0$ for each $Q\in\Z[x]$
of degree at most~$N$.\end{Remark}

{\em Proof of Corollary~C.} \ In view of Wiener's lemma,\footnote{Wiener's lemma says that whenever $\cf\subset L^2(\bs^1,\sigma)$ is a closed $V_\sigma$-invariant subspace then there exists a Borel subset $A\subset\bs^1$ such that $\cf=\jed_AL^2(\bs^1,\sigma)$ \cite{Pa}.} each of subspaces ${\cal H}^{(N)}_k$
in Theorem~A is of the form ${\cal
H}^{(N)}_k=\jed_{A^{(N)}_k}L^2(\cir^1,\sigma)$, where
$A_k^{(N)}\cap A_{l}^{(N)}=\emptyset$ (modulo~$\sigma$) whenever
$k\neq l$. In other words, we have proved the existence of
decomposition of $\sigma$ into the following mutually singular
terms: $$\sigma=\sum_{k\geq1} \sigma|_{A^{(N)}_k}.$$

We  set
$\sigma^{(N)}_k=\frac1{\sigma(A_k^{(N)})}\sigma|_{A^{(N)}_k}$ and
$a_k=\sigma(A_k^{(N)})$. To complete the proof of Corollary~C it is enough to apply Theorem~A and
Lemma~\ref{choj1} (see also Remark~\ref{choj8}).\koniec

The decomposition result given by Corollary~C does not depend on the fact that $\sigma$ is
continuous or not. In case of discrete measures however the
following results holds.

\begin{Prop}\label{choj9} Assume that $\sigma$ is a probability
Borel measure on $\cir^1$. Then the following conditions are
equivalent:\\
(i) $\sigma$ is atomic.\\
(ii) $p\,\text{-}\!\lim_{n\in\N}\widehat{\sigma}(n)=1$ for each $p\in
E(\beta\N)$.\\
(iii) $p\,\text{-}\!\lim_{n\in\N}\widehat{\sigma}(P(n))=1$ for each $P\in
{\cal P}$ and $p\in E(\beta\N)$.
\end{Prop}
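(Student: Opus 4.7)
The plan is to close the circle $(i) \Rightarrow (iii) \Rightarrow (ii) \Rightarrow (i)$. The implication $(iii) \Rightarrow (ii)$ is immediate on taking $P(x) = x$.

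For $(i) \Rightarrow (iii)$, I would write $\sigma = \sum_j a_j \delta_{\lambda_j}$ with $\lambda_j \in \bs^1$ and $\sum_j a_j = 1$. Then $\widehat{\sigma}(P(n)) = \sum_j a_j \lambda_j^{P(n)}$, and since the tail $\sum_{j>J} a_j$ can be made uniformly small in $n$, it suffices to show that for every $\lambda \in \bs^1$, every $P \in \cp$ and every idempotent $p$ one has $p\,\text{-}\!\lim_{n\in\N} \lambda^{P(n)} = 1$. Since $\bs^1$ is a compact abelian topological group---so the joint-continuity hypothesis of Lemma \ref{idempotent} is automatic---and since the only idempotent of $\bs^1$ is $1$, I would proceed in two stages. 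First, a strong induction on $k$ yields $p\,\text{-}\!\lim_{n\in\N} \lambda^{n^k} = 1$ for every $k \geq 1$: the base case $k=1$ is Lemma \ref{plimit-proj}, and the inductive step is Lemma \ref{idempotent} applied with $s = \lambda$, $P(x) = x^{k+1}$ and all $r_j = 1$. Then, for an arbitrary $P \in \cp$ of degree $d$, one more application of Lemma \ref{idempotent}---using the monomial facts just established for $j = 1, \ldots, d-1$---gives $p\,\text{-}\!\lim_{n\in\N} \lambda^{P(n)} \in E(\bs^1) = \{1\}$. Combined with Lemma \ref{choj1}(i), this completes the implication.

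For $(ii) \Rightarrow (i)$, I proceed by contrapositive. If $\sigma$ is not atomic, decompose $\sigma = \sigma_d + \sigma_c$ where $\sigma_c$ is a nonzero continuous measure. Since eigenvectors of $V_\mu$ correspond precisely to atoms of $\mu$, the operator $V_{\sigma_c}$ has no nonzero eigenvectors, i.e.\ is weakly mixing. As recalled in the introduction, a weakly mixing unitary operator admits some idempotent $p \in \beta\N$ with $p\,\text{-}\!\lim_{n\in\N} V_{\sigma_c}^n = 0$ in the weak operator topology. Pairing against the constant function $\un$ gives $p\,\text{-}\!\lim_{n\in\N} \widehat{\sigma_c}(n) = 0$, and combined with the trivial estimate $|\widehat{\sigma_d}(n)| \leq \sigma_d(\bs^1) = 1 - \sigma_c(\bs^1) < 1$ this forces $|p\,\text{-}\!\lim_{n\in\N} \widehat{\sigma}(n)| < 1$, contradicting (ii).

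No step presents genuine technical difficulty: the main conceptual point is the iterative use of Lemma \ref{idempotent} to transport the triviality of $E(\bs^1)$ through polynomial powers, and the only external input needed in the reverse direction is the standard characterization of weak mixing in terms of idempotent ultrafilters that was stated in the introduction.
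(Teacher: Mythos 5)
Your proof is correct, but the route through $(i)\Rightarrow(iii)$ is genuinely different from the paper's. The paper argues at the operator level: it first observes that $p\,\text{-}\!\lim_{n\in\N}U^{P(n)}=0$ forces continuous spectrum (via Lemma~\ref{plimit}~(ii) and footnote~\ref{ordinaryc}, since a $p$-limit equal to $0$ yields an ordinary subsequence along which $\langle U^{k_s}x,x\rangle\to 0$), so for an atomic $\sigma$ the projection furnished by Theorem~\ref{tw5wstep} must be the identity. You instead decompose $\sigma$ into point masses and prove directly that $p\,\text{-}\!\lim_{n\in\N}\lambda^{P(n)}=1$ for every $\lambda\in\bs^1$ by bootstrapping Lemma~\ref{idempotent} through the monomials, using that the only idempotent of a compact group is its unit; this is more elementary and self-contained (it bypasses Theorem~\ref{tw5wstep} entirely, and in fact you do not even need Lemma~\ref{choj1} at the end, since you compute $\widehat\sigma(P(n))$ directly), at the cost of an extra $\vep/3$-argument to pass the $p$-limit through the infinite convex combination. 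For $(ii)\Rightarrow(i)$ both proofs are the same contrapositive through weak mixing of the continuous part; the only difference is that you invoke the characterization ``weakly mixing iff $p\,\text{-}\!\lim_{n\in\N}U^n=0$ for some idempotent $p$'' as a black box from the introduction, whereas the paper proves exactly the direction it needs inline (a density-one set along which $U^{m_i}\to 0$ weakly contains an IP-set, and Lemma~\ref{plimit}~(i) then manufactures the idempotent). Your appeal to that stated fact is legitimate, but be aware that its proof is precisely the content of the paper's argument, so in a self-contained write-up you would need to reproduce the IP-set extraction.
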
\begin{proof}
To obtain the proof that (i) implies (ii), first notice that if
$p\,\text{-}\!\lim_{n\in\N}U^{P(n)}=0$ (for a non-zero $P\in{\cal P}$)
for some $U\in{\cal U}({\cal H})$ then $U$ has continuous
spectrum; indeed, in view of Lemma~\ref{plimit}~(ii) and footnote~\ref{ordinaryc}, $ p\,\text{-}\!\lim_{n\in\N}\langle
U^{P(n)}x,x\rangle=0$ implies $\lim_{s\to\infty}\langle
U^{k_s}x,x\rangle=0$ for an ordinary subsequence
$(k_s)_{s\geq1}\subset\N$ which is sufficient to conclude that the
spectral measure $\sigma_x$ is continuous.

Let us prove now that (ii) implies (i). Suppose that $U$ has
partly continuous spectrum. Then ${\cal H}={\cal H}_d\oplus {\cal
H}_c$ where ${\cal H}_c\neq\{0\}$ and $U$ has continuous spectrum
on $\ch_c$ and discrete spectrum on $\ch_d$. It follows that there exists an increasing sequence
$(m_i)\subset\N$ of density~$1$ such that $\left(U|_{{\cal
H}_c}\right)^{m_i}\to 0$ weakly. Since $(m_i)$ is of density one,
we can find an increasing subsequence $(n_k)$ of it such that
${\rm FS}((n_k))$ is contained in $\{m_i:\:i\geq1\}$\footnote{This fact
is well known. As a matter of  fact, each subset $A$ of $\N$ containing
intervals of arbitrary lengths includes an IP-set: if
$n_1,\ldots,n_k$ have already been selected,  choose $n_{k+1}\in A$ so that
all sums $n_{i_1}+\ldots+n_{i_s}+n_{k+1}$ are in $A$ for each
$1\leq i_1<\ldots<i_s\leq k$.}. Now,
$${\rm IP}\,\text{-}\!\lim\left(U|_{{\cal H}_c}\right)^{n_\alpha}=0.$$ Hence, by
Lemma~\ref{plimit}, there exists $p\in E(\beta\N)$ such that
$p\,\text{-}\!\lim_{n\in\N}\left(U|_{{\cal H}_c}\right)^{n}=0$ and
therefore we cannot have $p\,\text{-}\!\lim_{n\in\N}U^{n}=Id$, a
contradiction.
\end{proof}

{\em Proof of Corollary~D.} \  Let us prove (i) (the proof of (ii) is similar). By Lemma~\ref{choj1}, $p\,\text{-}\!\lim_{n\in\N}V_\sigma^{ln}=0$ for each $l\geq1$. In view of Corollary~B, $p\,\text{-}\!\lim_{n\in\N}V_\sigma^{P(n)}=0$ for each $0\neq P\in\cp$. The result now follows directly from Lemma~\ref{choj1}.\koniec

\section{Classification of $N$-rigidity
groups}\label{rigiditygroups}
The main goal of this section  is to prove Theorem~E.
\subsection{The notion of $N$-rigidity group} Motivated by the properties of the
decomposition~(\ref{dcm}) in Theorem~A, we will now
introduce and study $N$-rigidity groups.

We fix $N\geq1$.  Assume that
$U\in{\cal U}({\cal H})$ and $p\in E(\beta\N)$. Let $\{0\}\neq{\cal
J}\subset {\cal H}$ be a closed $U$-invariant subspace such
that~(\ref{qw1}) holds for it. Let us now consider
\beq\label{defrg} G(p,N,U,{\cal J}):=\{P\in{\cal P}_{\leq N} :\:
p\,\text{-}\!\lim_{n\in\N} \left(U|_{\cal J}\right)^{P(n)}=Id\}.\eeq It
is not hard to see that $G(p,N,U,{\cal J})$ is a subgroup of
${\cal P}_{\leq N}$. Let $N'$ be the maximum degree of elements
in $G(p,N,U,{\cal J})$. We claim  that on ${\cal J}$, for each
$r=1,\ldots,N'$, there exists (a unique) integer $k_r\geq1$ such that
\beq\label{cla1}\begin{array}{l} p\,\text{-}\!\lim_{n\in\N}
\left(U|_{\cal J}\right)^{jn^r}=0
\;\;\mbox{for}\;\;j=1,\ldots,k_r-1\\
\mbox{and}\;\;p\,\text{-}\!\lim_{n\in\N}  \left(U|_{\cal
J}\right)^{k_rn^r}=Id.\end{array}\eeq Indeed, otherwise for some
$1\leq j\leq N'$, $p\,\text{-}\!\lim_{n\in\N} \left(U|_{\cal
J}\right)^{ln^j}=0$ for all $l\geq1$ and, by Corollary~B,
we have $p\,\text{-}\!\lim_{n\in\N} \left(U|_{\cal J}\right)^{P(n)}=0$
for all $P\in{\cal P}_{\geq j}$. In
other words
$$
N'=\max\left\{1\leq m\leq N:\:\begin{array}{l}\mbox{for each $r=1,\ldots,m$
there exists
$k_r\geq1$ such that}\\
p\,\text{-}\!\lim_{n\in\N} \left(U|_{\cal J}\right)^{jn^r}=0
\;\;\mbox{for}\;\;j=1,\ldots,k_r-1\\
\mbox{and}\;\;p\,\text{-}\!\lim_{n\in\N}  \left(U|_{\cal
J}\right)^{k_rn^r}=Id.\end{array}\right\}$$ Now, fix $1\leq m\leq
N'$ and let $\{Q_1,\ldots,Q_{m}\}$ be an arbitrary set of generators
for ${\cal P}_{\leq m}$. Since $Q_j(x)=\sum_{s=1}^{m}a_{j,s}x^s$, $j=1,\ldots,m$,
$$
p\,\text{-}\!\lim_{n\in\N}  \left(U|_{\cal J}\right)^{bQ_j(n)}=Id,$$
where $b={\rm lcm}(k_1,\ldots,k_m)$. Hence there are (unique) integers
$l_1,\ldots,l_{m}\geq 1$ such that $p\,\text{-}\!\lim_{n\in\N}
\left(U|_{\cal J}\right)^{jQ_r(n)}=0$ for $j=1,\ldots,l_r-1$ and
$p\,\text{-}\!\lim_{n\in\N}  \left(U|_{\cal J}\right)^{l_rQ_r(n)}=Id$ for
$r=1,\ldots,m$. It follows that \beq\label{misg1}
N'=\max\left\{1\leq m\leq N:\:\begin{array}{l}\mbox{for each $r=1,\ldots,m$}\\\mbox{there exists
$l_r\geq1$ such that}\\
p\,\text{-}\!\lim_{n\in\N} \left(U|_{\cal J}\right)^{jQ_r}=0
\\\mbox{for}\;\;j=1,\ldots,l_r-1\\
\mbox{and}\;\;p\,\text{-}\!\lim_{n\in\N}  \left(U|_{\cal
J}\right)^{l_rQ_r}=Id.\end{array}\right\} \eeq

Clearly, \beq\label{ppp1} G(p,N,U,{\cal J})=G(p,N,U,{\cal
H}^{(N)}_t),\eeq where $t\geq1$ is unique so that ${\cal
J}\subset{\cal H}^{(N)}_t$ (see Theorem~A, formula~(\ref{dcm}) and Section~\ref{poldec2013}).

In view of~(\ref{misg1}),  we will assume in what follows that
$N'=N$ (if $N'<N$, the corresponding group has already been
introduced above as $G(p,N',U,{\cal J})$). Let $P(x)=\sum_{r=1}^N
a_rx^r\in\Z[x]$. Then there exist integers $0\leq j_r<k_r$ (see~(\ref{cla1})
for the definition of $k_r$), $m_r\in\Z$ such that
$a_r=j_r+m_rk_r$ for $r=1,\ldots,N$. By~(\ref{cla1}) and
Lemma~\ref{plimit-ciaglosc}
$$ p\,\text{-}\!\lim_{n\in\N} \left(U|_{\cal
J}\right)^{P(n)}=p\,\text{-}\!\lim_{n\in\N} \left(U|_{\cal
J}\right)^{j_1n+j_2n^2+\ldots+j_Nn^N}.$$

For $k\geq1$,  denote $\Z_k=\Z/k\Z$ and let $\pi_k$ stand for the
natural homomorphism  $\Z\to\Z_k$. Set
\beq\label{cla2} \widetilde{G}(p,N,U,{\cal J})
:=\left\{\begin{array}{l} \pi_{k_1}\times\ldots\times\pi_{k_N}
(j_1,\ldots,j_N)\in\Z_{k_1}\oplus\ldots\oplus\Z_{k_N}:\\
0\leq j_s<k_s,\; s=1,\ldots,N\;\mbox{and}\\p\,\text{-}\!\lim_{n\in\N}
\left(U|_{\cal
J}\right)^{j_1n+\ldots+j_Nn^N}=Id\end{array}\right\}.\eeq
It follows from Lemma~\ref{plimit-ciaglosc} that
$\widetilde{G}(p,N,U,{\cal J})$ is a
subgroup of
$\Z_{k_1}\oplus\ldots\oplus\Z_{k_N}$. We also have
\beq\label{ppp2} G(p,N,U,{\cal
J})=\left(\pi_{k_1}\times\ldots\times\pi_{k_N}\right)^{-1}
(\widetilde{G}(p,N,U,{\cal J}))~\footnote{If ${\cal J}={\cal H}$
we will simply write $G(p,N,U)$ and $\widetilde{G}(p,N,U)$.}.\eeq

Assume that $G\subset {\cal P}_{\leq N}$ satisfies $$ \max\{{\rm
deg}\,P:\: P\in G\}=N.$$ Then $G$ is called an $N$-{\em rigidity
group} (or, sometimes, {\em rigidity group} if no confusion
arises) if there are $p\in E(\beta\N)$ and $U\in{\cal U}({\cal
H})$ and $\{0\}\neq{\cal J}\subset {\cal H}$ such that
$G=G(p,N,U,{\cal J})$. Given  $p\in E(\beta\N)$, the groups of the form
$G=G(p,N,U,{\cal J})$ are called    $(p,N)$-{\em rigidity groups}.

Then $U|_{\cal J}$ satisfies~(\ref{cla1}) and the vector
$(k_1,\ldots,k_N)\in\N^N$ is called a {\em period} of $G$. Other
periods of $G$ will be obtained in the same way by choosing a
different basis in ${\cal P}_{\leq N}$, see Remark~\ref{rozj11} below. By~(\ref{cla1}), the group
$\widetilde{G}=\widetilde{G}(p,N,U,{\cal J})$ satisfies the $(\ast)$-property (see Introduction): For each $r=1,\ldots,N$
$$
(\ast)\;\;\;\;\;\;\;\begin{array}{l}
(j_1,\ldots,j_{r-1},j_r,j_{r+1},\ldots,j_N)\in \widetilde{G}\\
(j_1,\ldots,j_{r-1},j'_r,j_{r+1},\ldots,j_N)\in
\widetilde{G}\end{array}\;\Longrightarrow\;j_r=j'_r.$$
Note that $\widetilde{G}$ satisfies~$(\ast)$ if and only if
$(0,\ldots,0,j_r,0,\ldots,0)\in \widetilde{G}$ implies
$j_r=0$.

\begin{Remark}\label{rozj11} The
definition of $\widetilde{G}=\widetilde{G}(p,N,U,{\cal J})$ depends on the choice $x,x^2,\ldots,x^N$ as generators
in ${\cal P}_{\leq N}$. If we identify ${\cal P}_{\leq N}$ with
$\Z^N$ then~(\ref{ppp2}) can be written in a more suggestive form
$$
 G(p,N,U,{\cal
J})=\left(\pi_{k_1}\times\ldots\times\pi_{k_N}\right)^{-1}
(\pi_{k_1}\times\ldots\times\pi_{k_N}(G(p,N,U,{\cal J}))).$$ Note
however that if we take a different set of
generators of ${\cal P}_{\leq N}$, say $Q_1,\ldots,Q_N$, then we
obtain integers $l_1,\ldots,l_N\geq1$ as in~(\ref{misg1}) and, for $G=G(p,N,U,{\cal
J})$, we can define the $\widetilde{G}$ using $\pi_{l_j}$, where
the identification of ${\cal P}_{\leq N}$ with $\Z^N$ is given by
$$Q_j\mapsto (\underbrace{0,\ldots,0}_{j-1},1,
\underbrace{0,\ldots,0}_{N-j}).$$ Then~(\ref{ppp2}) is also true. The vector $(l_1,\ldots,l_N)\in\N^N$
is said to be a {\em period}  of $G$. Note however that since~(\ref{misg1}) holds for any period, the crucial
$(\ast)$-property  does not depend on the choice of
isomorphism between ${\cal P}_{\leq N}$ and $\Z^N$.
\end{Remark}

In view of~(\ref{ppp2}) and of Remark~\ref{rozj11}, $G(p,N,U,{\cal J})$ is entirely
determined by a choice of a period and by the corresponding to it group
$\widetilde{G}(p,N,U,{\cal J})$. The latter group will be called
an $N$-{\em periodic rigidity group} of $U$. Periods and
$N$-periodic rigidity groups depend on a choice of generators in
${\cal P}_{\leq N}$. However, as it was explained in Remark~\ref{rozj11},  all $N$-periodic rigidity groups obtained from  an $N$-rigidity group $G=G(p,N,U,{\cal J})$ satisfy the $(\ast)$-property.

\begin{Remark}\label{rozjasnienie} Any natural number, including~$1$, can appear as one of the entries of a period of an  $N$-periodic
rigidity group. Moreover, a $(p,N)$-rigidity group $G$ equals $\cp_{\leq N}$ if and only if it has a period equal to $(1,\ldots,1)$ (and then, there is only one period for it).
\end{Remark}

\subsection{Algebraic characterization of groups satisfying the
$(\ast)$-property}

A subgroup $K\subset\Z_{k_1}\times\ldots\times\Z_{k_N}$ is called
an {\em algebraic coupling}\footnote{The concept of algebraic coupling is analogous to the notion of joining in ergodic theory \cite{Gl}.} if it has the full projection on each
coordinate. Our aim is to show that a group $\widetilde{G}\subset
\Z_{k_1}\oplus\ldots\oplus\Z_{k_N}$  satisfies the
$(\ast)$-property if and only if it annihilates\footnote{To speak about annihilators of subgroups of a locally compact Abelian group $V$, we need a bilinear form defined on $\widehat{V}\times V$, where $\widehat{V}$, the Pontriagin's {\em dual} of $V$, denotes the group of continuous group homomorphisms from $V$ to $\bs^1$,  see~(\ref{sk1}) below.} an algebraic coupling contained in $\Z_{k_1}\times\ldots\times\Z_{k_N}$, see Theorem~\ref{p-group-main} below.

 To simplify the  notation we
will write $G$ instead of $\widetilde{G}$.
%One more piece of
%notation is needed.
%Given an element $g$ of a group $G$ we denote
%by $\langle g\rangle$ the subgroup of $G$ generated by $g$.
%The cardinality of $G$ is denoted by $|G|$;  $|g|$ denotes the
%order of $g\in G$.
We have already identified $\Z_k$ with $\Z/k\Z$
and elements $z+k\Z$ of $\Z/k\Z$ will be denoted by
$\ov{z}\in\Z_k$.

Let $V=\Z_{k_1}\oplus\ldots\oplus\Z_{k_N}$ and let
$\overline{\pi}_j:V\rightarrow \Z_{k_j}$ denote the projection
onto the $j$th coordinate. Then, up to some identification,
$\widehat{V}=\Z_{k_1}\times\ldots\times \Z_{k_N}$.

Assume that we are given injective characters
$\chi_j:\Z_{k_j}\rightarrow \bs^1$, $j=1,\ldots,N$. Let $
\xi:\widehat{V}\times V\rightarrow \bs^1 $ be a $\Z$-bilinear map
(the Abelian group $\bs^1$ is a $\Z$-module) defined by the
formula \beq\label{sk1}
\xi((c_1,\ldots,c_N),(d_1,\ldots,d_N))=\chi_1(c_1d_1)\cdot\ldots
\cdot \chi_N(c_Nd_N)\eeq for any $(c_1,\ldots,c_N)\in\widehat{V}$,
$(d_1,\ldots,d_N)\in V$, where we write $c_jd_j$ for the multiplication in the ring $\Z_{k_j}$.

Given a subset $A\subset V$,  the {\em annihilator}
$A^\perp$ of $A$ (with respect to $\xi$) is defined by
\beq\label{da}
A^{\perp}:=\{c\in \widehat{V}:\xi(c,A)=\{1\}\}~\footnote{The annihilator of a subset $B\subset \widehat{V}$ is defined similarly.}.
\eeq
The following lemma is classical.

\begin{Lemma}\label{sk2} The set $A^{\perp}$
is a subgroup of $\widehat{V}$ and $A\subset (A^{\perp})^{\perp}$
for any $A\subset V$.\end{Lemma}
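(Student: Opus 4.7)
The plan is a short verification that uses only the $\mathbb{Z}$-bilinearity of the pairing $\xi$ defined in~(\ref{sk1}) and the definition~(\ref{da}) of the annihilator; there is no real obstacle, and the statement is indeed classical. I would present it in two very short steps, corresponding to the two assertions.

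For the first assertion, I would fix $a\in A$ and observe that the partial evaluation
$$\widehat V\ni c\longmapsto \xi(c,a)\in\bs^1$$
is a group homomorphism, since $\xi$ is $\Z$-bilinear and hence additive in its first argument. Its kernel $\{c\in\widehat V:\xi(c,a)=1\}$ is therefore a subgroup of $\widehat V$. Now $A^\perp$ is nothing but the intersection of these kernels as $a$ ranges over~$A$, and an intersection of subgroups is a subgroup; this yields $A^\perp\leq\widehat V$.

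For the second assertion I would simply unwind definitions. Given $a\in A$, I have to check that $a\in (A^\perp)^\perp$, i.e.\ that $\xi(c,a)=1$ for every $c\in A^\perp$. But by~(\ref{da}) the condition $c\in A^\perp$ means exactly that $\xi(c,a')=1$ for every $a'\in A$, and applying this with $a'=a$ gives what is needed.

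The only step with any content is the observation that partial evaluation of a bilinear pairing is a homomorphism; everything else is set-theoretic bookkeeping, and the injective characters $\chi_j$ play no role at this stage. I would expect the proof in the paper to take no more than two or three lines.
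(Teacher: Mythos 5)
Your proof is correct and is exactly the standard verification the authors have in mind: the paper states this lemma without proof, dismissing it as ``classical,'' and your two observations (that $c\mapsto\xi(c,a)$ is a homomorphism by bilinearity, so $A^\perp$ is an intersection of kernels, and that $A\subset(A^\perp)^\perp$ is immediate from unwinding the definition) supply precisely the omitted details. Nothing is missing.
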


Our present goal  is to prove the following result.

\begin{Prop}\label{sk3} Assume we are given injective characters
$\chi_j:\Z_{k_j}\rightarrow \bs^1$, $j=1,\ldots,N$ and
$\xi:\widehat{V}\times V\rightarrow \bs^1$ is defined by the
formula~(\ref{sk1}), where $V=\Z_{k_1}\oplus\ldots\oplus\Z_{k_N}$.
Let $G\subset V$ be a subgroup of $V$. Then

{\rm (i)} $G=(G^{\perp})^{\perp}$.

{\rm (ii)} The group $G$ has the $(\ast)$-property if and only if
$K:=G^{\perp}$ is an algebraic coupling of
$\Z_{k_1},\ldots,\Z_{k_N}$.\end{Prop}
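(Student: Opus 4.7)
The plan is to handle part (i) as a concrete instance of Pontryagin duality for finite abelian groups, and then to leverage (i) in part (ii) to turn the $(\ast)$-property into a statement about coordinate projections of $K$.

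For (i), I would first observe that the injectivity of each $\chi_j:\Z_{k_j}\to\bs^1$ forces $\chi_j$ to have order exactly $k_j$, so the assignment $c_j\mapsto\chi_j(c_j\,\cdot\,)$ is an injective group homomorphism $\Z_{k_j}\to\widehat{\Z_{k_j}}$ between groups of equal finite order, hence an isomorphism. Taking products, the bilinear form $\xi$ identifies $\widehat{V}=\Z_{k_1}\times\ldots\times\Z_{k_N}$ with the true Pontryagin dual of $V$, and annihilators with respect to $\xi$ coincide with the classical character annihilators. For any subgroup $G\subset V$, the standard duality relation for finite abelian groups gives $|G|\cdot|G^{\perp}|=|V|$; applied to $G^{\perp}\subset\widehat{V}$ it yields $|(G^{\perp})^{\perp}|=|\widehat{V}|/|G^{\perp}|=|G|$. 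Together with the inclusion $G\subset(G^{\perp})^{\perp}$ from Lemma~\ref{sk2}, this forces equality.

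For (ii), write $e_r(j):=(0,\ldots,0,j,0,\ldots,0)\in V$ with $j\in\Z_{k_r}$ placed in the $r$-th slot. The $(\ast)$-property, as reformulated in the text preceding the proposition, is equivalent to the assertion that $e_r(j)\in G$ forces $j=0$ for every $r$. Using part (i), $G=K^{\perp}$, so $e_r(j)\notin G$ is equivalent to the existence of $c=(c_1,\ldots,c_N)\in K$ with $\xi(c,e_r(j))\neq1$. By the definition~(\ref{sk1}), $\xi(c,e_r(j))=\chi_r(c_r\,j)$, which by injectivity of $\chi_r$ is nontrivial if and only if $c_r\,j\neq0$ in the ring $\Z_{k_r}$. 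Hence $(\ast)$ holds if and only if, for every $r$ and every nonzero $j\in\Z_{k_r}$, there exists $c\in K$ with $c_r\,j\neq0$.

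Finally, I would check that this last condition is equivalent to $\overline{\pi}_r(K)=\Z_{k_r}$ for each $r$, i.e., to $K$ being an algebraic coupling. Writing the cyclic subgroup $H_r:=\overline{\pi}_r(K)\subset\Z_{k_r}$ in the form $(k_r/e_r)\Z_{k_r}$ for a unique divisor $e_r$ of $k_r$, a short computation shows that $\{j\in\Z_{k_r}:c\,j=0\text{ for all }c\in H_r\}=e_r\Z_{k_r}$; this ring-annihilator is trivial if and only if $e_r=k_r$, i.e., $H_r=\Z_{k_r}$. The entire argument is essentially Pontryagin duality expressed in coordinates, and the only non-cosmetic input is the non-degeneracy of $\xi$, which comes for free from the injectivity of the characters $\chi_j$; I do not anticipate a genuine obstacle beyond keeping the two uses of the word ``annihilator'' (character-theoretic versus ring-theoretic in each factor $\Z_{k_r}$) carefully aligned.
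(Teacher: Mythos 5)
Your proof is correct and follows essentially the same route as the paper: both exploit the injectivity of the $\chi_j$ to make $\xi$ a perfect pairing, deduce $G=(G^{\perp})^{\perp}$, and then translate the $(\ast)$-property in the $r$-th coordinate into the statement that $K$ has full projection onto $\Z_{k_r}$. The only cosmetic difference is that you establish (i) via the order-counting identity $|G|\cdot|G^{\perp}|=|V|$ for finite abelian groups, whereas the paper derives it from the character-separation argument for closed subgroups of a locally compact abelian group (Lemmas~\ref{kasjan1} and~\ref{kasjan2}) composed with the automorphisms induced by the $\chi_j$.
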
 \vspace{2ex}

In order to prove~(i) above, it will be convenient to introduce a more general
framework. Assume that $V$ is a locally compact Abelian group and
define a bilinear form by
$$
\langle\cdot,\cdot\rangle:\widehat{V}\times V\to \bs^1,\;\;\langle
\phi,v\rangle=\phi(v).
$$
The above form allows one to define the concept of annihilator of
a set both in $V$ as well as in
$\widehat{V}$~\footnote{Classically, if $G$ is a closed subgroup
of $V$, $G^\perp$ has a natural identification with
$(V/G)^{\widehat{}}$.}. We then have one more classical observation
(cf.\ \cite{Ru}).

\begin{Lemma}\label{kasjan1} For each closed subgroup $G\subset V$,
$G=(G^{\perp})^{\perp}$. \end{Lemma}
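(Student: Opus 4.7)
The plan is to prove the two inclusions separately. The inclusion $G \subset (G^\perp)^\perp$ is essentially the content of Lemma~\ref{sk2} (applied in the setting of a general locally compact Abelian group $V$ instead of a finite one): if $v \in G$, then by definition of $G^\perp$, every $\phi \in G^\perp$ satisfies $\phi(v) = 1$, hence $v \in (G^\perp)^\perp$. This part is purely formal and needs no topology.

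For the reverse inclusion $(G^\perp)^\perp \subset G$, I would argue by contrapositive: suppose $v \in V \setminus G$. Since $G$ is a closed subgroup, the quotient $V/G$ is a (Hausdorff) locally compact Abelian group, and the canonical projection $\pi : V \to V/G$ is a continuous surjective homomorphism with $\pi(v) \neq 0$. I then invoke the classical consequence of Pontryagin duality which asserts that the characters of a locally compact Abelian group separate points; explicitly, there exists a continuous character $\psi : V/G \to \mathbb{S}^1$ with $\psi(\pi(v)) \neq 1$. Setting $\phi := \psi \circ \pi \in \widehat{V}$, one has $\phi|_G \equiv 1$, so $\phi \in G^\perp$, while $\langle \phi, v \rangle = \psi(\pi(v)) \neq 1$, so $v \notin (G^\perp)^\perp$.

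The main (and essentially only) obstacle is the appeal to the existence of separating characters on the quotient $V/G$; this is precisely the standard consequence of Pontryagin duality for locally compact Abelian groups, and it is the reason closedness of $G$ is needed (otherwise $V/G$ would fail to be Hausdorff). Everything else is a matter of unwinding the definition of the annihilator.
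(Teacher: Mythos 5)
Your proposal is correct and matches the paper's argument: the paper likewise notes $G\subset(G^{\perp})^{\perp}$ by definition and, for $v\notin G$, asserts the existence of $\phi\in\widehat{V}$ with $\phi(G)=\{1\}$ and $\phi(v)\neq1$ as a classical fact (citing Rudin), which is exactly the separation-of-points statement you derive via the quotient $V/G$. Your version merely makes explicit the standard Pontryagin-duality justification that the paper leaves implicit.
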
 \begin{proof} By definition,
$G\subset(G^\perp)^\perp$. On the other hand, if $v\notin G$ then
there exists $\phi\in\widehat{V}$ such that $\phi(G)=\{1\}$ and
$\phi(v)\neq1$. It follows that $\phi\in G^\perp$ and
$\phi(v)\neq1$ implies $v \notin(G^\perp)^\perp$.
\end{proof}

Now, suppose that $\alpha$ and $\beta$ are continuous
automorphisms of $V$ and $\widehat{V}$ respectively. Set
$$
\xi:\widehat{V}\times V\to \bs^1,\;\;\xi(\phi,v)=\langle
\beta(\phi),\alpha(v)\rangle.
$$
We obtain the following extension of Lemma~\ref{kasjan1}.

\begin{Lemma}\label{kasjan2} The assertion of Lemma~\ref{kasjan1} holds when  $\perp$ is defined, using~(\ref{da}), with respect to $\xi$. \end{Lemma}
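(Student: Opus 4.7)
The plan is to reduce the statement about $\xi$-annihilators to the classical statement (Lemma~\ref{kasjan1}) about natural annihilators by pushing everything through the automorphisms $\alpha$ and $\beta$. Since $\alpha$ is a continuous automorphism of $V$, the image $\alpha(G)$ is again a closed subgroup of $V$, so Lemma~\ref{kasjan1} can be applied to $\alpha(G)$.

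Concretely, I would first compute, for any subgroup $G\subset V$, the $\xi$-annihilator in terms of the classical one. By the definition of $\xi$,
$$G^{\perp,\xi}=\{\phi\in\widehat V:\langle\beta(\phi),\alpha(v)\rangle=1\;\text{for all }v\in G\}=\beta^{-1}\bigl(\alpha(G)^{\perp}\bigr),$$
where on the right $\perp$ is the natural annihilator. Similarly, for any subgroup $H\subset\widehat V$,
$$H^{\perp,\xi}=\{v\in V:\langle\beta(\phi),\alpha(v)\rangle=1\;\text{for all }\phi\in H\}=\alpha^{-1}\bigl(\beta(H)^{\perp}\bigr).$$

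Now I would combine these two identities. Applying the second to $H=G^{\perp,\xi}$ and using that $\beta$ is an automorphism gives
$$\bigl(G^{\perp,\xi}\bigr)^{\perp,\xi}=\alpha^{-1}\Bigl(\beta\bigl(\beta^{-1}(\alpha(G)^{\perp})\bigr)^{\perp}\Bigr)=\alpha^{-1}\Bigl(\bigl(\alpha(G)^{\perp}\bigr)^{\perp}\Bigr).$$
By Lemma~\ref{kasjan1} applied to the closed subgroup $\alpha(G)\subset V$, the inner double annihilator equals $\alpha(G)$, and since $\alpha$ is an automorphism, $\alpha^{-1}(\alpha(G))=G$. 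This yields $(G^{\perp,\xi})^{\perp,\xi}=G$, as required.

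The argument is essentially formal, and no step looks like a real obstacle: one only needs continuity of $\alpha,\beta$ (to know $\alpha(G)$ is closed so that Lemma~\ref{kasjan1} applies) and bijectivity (to pass the automorphisms inside and outside annihilators). The one thing to watch out for is making sure $\beta$ is genuinely invertible on $\widehat V$, so that $\beta(\beta^{-1}(A))=A$ for any $A\subset\widehat V$; this is given by hypothesis.
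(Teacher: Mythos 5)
Your proof is correct and rests on the same idea as the paper's: both exploit that $\alpha$ and $\beta$ are automorphisms to transfer the twisted annihilator back to the classical one. The paper simply re-runs the separation argument of Lemma~\ref{kasjan1} (choosing, for $v\notin G$, a character $\phi$ with $\beta(\phi)(\alpha(G))=\{1\}$ and $\beta(\phi)(\alpha(v))\neq 1$), whereas you package the same content as the identities $G^{\perp,\xi}=\beta^{-1}(\alpha(G)^{\perp})$ and $H^{\perp,\xi}=\alpha^{-1}(\beta(H)^{\perp})$ and then invoke Lemma~\ref{kasjan1} as a black box on $\alpha(G)$; this is a cosmetic difference only.
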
 \begin{proof} The proof repeats the argument from
the proof of Lemma~\ref{kasjan1} (take $v\notin G$, choose
$\phi$ so that $\beta(\phi)(\alpha(G))=\{1\}$ and
$\beta(\phi)(\alpha(v))\neq1$).
\end{proof}

The claim~(i) of Proposition~\ref{sk3} now follows immediately
from~(\ref{sk1}).

{\em Proof}~{\em of~(ii) in Proposition~\ref{sk3}}. Assume that
$K=G^{\perp}$ is an algebraic coupling of
$\Z_{k_1},\ldots,\Z_{k_N}$ and suppose that $g=(g_1,0,\ldots,0)\in
G$. For any $c_1\in\Z_{k_1}$ there exists $(c_2,\ldots,c_N)\in
\Z_{k_2}\times\ldots\times\Z_{k_N}$ such that
$c:=(c_1,c_2,\ldots,c_N)\in K$. Then
$$
1=\xi(c,g)=\chi_1(c_1g_1).
$$
Since $c_1$ is arbitrary and $\chi_1$ is an injective character,
$g_1=0$. Repeating the argument for the remaining coordinates proves that $G$ has the $(\ast)$-property.

Conversely, assume that $K$ is not an algebraic coupling of
$\Z_{k_1},\ldots,\Z_{k_N}$.  Assume  that
$\ov{\pi}_1(K)\neq \Z_{k_1}$ and let $c_1$ be a generator of
$\ov{\pi}_1(K)$. There is an integer $z$ such that $zc_1=0$ in
$\Z_{k_1}$ and $\ov{z}\neq 0$ in $\Z_{k_1}$. Then $0\neq
g:=(\ov{z},0,\ldots,0)\in K^{\perp}=G$ (the latter equality
follows from~(i)), thus, $G$ does not have the $(\ast)$-property.
\koniec

\vspace{2ex}

Let us consider the special case $N=2$. It follows from the
$(\ast)$-property that $G\subset\Z_{k_1}\oplus\Z_{k_2}$ is the
graph of an isomorphism between subgroups
$\Z_s^{(1)}\subset\Z_{k_1}$ and $\Z_s^{(2)}\subset\Z_{k_2}$, where both
$\Z_s^{(1)}$ and  $\Z_s^{(2)}$
are isomorphic to $\Z_s$ (for some $s|k_i$, $i=1,2$).

In the general case, take $G\subset
\Z_{k_1}\oplus\ldots\oplus\Z_{k_N}$ satisfying the property~($\ast$). Denote by
$G_1$ the projection of $G$ on the first $N-1$ coordinates, i.e.\
on $\Z_{k_1}\oplus\ldots\oplus\Z_{k_{N-1}}$. By the property~($\ast$),
there exists a group homomorphism $w:G_1\to\Z_{k_N}$
such that \beq\label{opis1}
G=\{(g_1,w(g_1))\in\left(\Z_{k_1}\oplus\ldots
\oplus\Z_{k_{N-1}}\right)\oplus \Z_{k_N}:\: g_1\in G_1\}.\eeq Note
that if
$g_1=(0,\ldots,0)\in\Z_{k_1}\oplus\ldots\oplus\Z_{k_{N-1}}$ then
$w(g_1)=0$. It easily follows that $G$ satisfies the $(\ast)$-property
 if and only if the kernel of $w$ (which is a subgroup of
$G_1\subset\Z_{k_1}\oplus\ldots\oplus\Z_{k_{N-1}}$) also satisfies the property~($\ast$).
Then, we can repeat the same argument with $G$ replaced by ${\rm ker}\,w$ to go one step down.

\subsection{Constructions -- measure-theoretic preparations}\label{prepa} Proposition~\ref{sk3} provides  a full
algebraic description of groups satisfying the $(\ast)$-property as
groups which annihilate algebraic couplings of finite
cyclic groups.

Let us summarize the notational agreements from the previous section.  We will be writing
$V$ as $\Z_{k_1}\oplus\ldots\oplus \Z_{k_N}$. By $\chi_r$ we
denote the natural embedding character $\ov{1}\mapsto e^{2\pi
i/k_r}$ of $\Z_{k_r}$ in the group
$$\Sigma_{k_r}:=\{e^{2\pi iu/k_r}:\:u=0,1,\ldots,k_r-1\}.$$
Notice that $\Z_{k_r}$ has the natural structure of
$\Z_{k_r}$-module. Therefore, if
$(j_1,\ldots,j_N)\in\Z_{k_1}\oplus\ldots\oplus\Z_{k_N}$ then the
formula
$$
j_1\times\ldots\times j_N(c_1,\ldots,c_N)=(j_1c_1,\ldots,j_Nc_N),\;\;(c_1,\ldots,c_N)\in \Z_{k_1}\times\ldots\times \Z_{k_N},$$
defines  an endomorphism $j_1\times\ldots\times j_N$ of
$\Z_{k_1}\times\ldots\times\Z_{k_N}$ and the form $\xi$ from the
previous section can be written as
$$
\xi((c_1,\ldots,c_N),
(j_1,\ldots,j_N))=\chi_1\ot\ldots\ot\chi_N(j_1\times\ldots \times
j_N(c_1,\ldots,c_N)).$$

We have now the following result.

\begin{Th}\label{p-group-main} Assume that $G\subset\Z_{k_1}
\oplus\ldots\oplus\Z_{k_N}$ is a subgroup. Then the following
assertions are equivalent: \begin{enumerate} \item[(a)] $G$ is an
$N$-periodic rigidity group.\item[(b)] $G$ satisfies the
$(\ast)$-property.
\item[(c)] $G=K^\perp$ for some algebraic coupling $K$ of $\Z_{k_1},
\ldots,\Z_{k_N}$.\end{enumerate}
\end{Th}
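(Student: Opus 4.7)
The plan is to treat the three implications separately. The equivalence (b)$\Leftrightarrow$(c) is already Proposition~\ref{sk3}, applied with the injective characters $\chi_r(\ov{1})=e^{2\pi i/k_r}$ and the bilinear form $\xi$ of~(\ref{sk1}); no further work is needed. The direction (a)$\Rightarrow$(b) has, in essence, been recorded already in the discussion following the definition of $\widetilde{G}(p,N,U,\cj)$: the minimality of the periods $k_r$ in~(\ref{cla1}) forces $(0,\ldots,0,j_r,0,\ldots,0)\in\widetilde{G}$ to imply $j_r=0$, which is $(\ast)$. The real content of the theorem is therefore (c)$\Rightarrow$(a): starting from an algebraic coupling $K\subset\Z_{k_1}\times\cdots\times\Z_{k_N}$, I must produce an idempotent $p\in E(\beta\N)$, a unitary $U\in\cu(\ch)$, and an invariant subspace $\cj$ such that $\widetilde{G}(p,N,U,\cj)=K^\perp$.

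To construct such $U$, I would follow the blueprint sketched in the Introduction. Take an odometer $T:X\to X$ whose scale is divisible by $\mathrm{lcm}(k_1,\ldots,k_N)$ together with all the polynomial combinatorial denominators appearing below; build (by an iterated Anzai-type skew-product construction) a cocycle $\Phi:X\to\Z_{k_1}\oplus\cdots\oplus\Z_{k_N}$ taking values in $K$ and whose $n$-th Birkhoff sum $\Phi^{(n)}(x)=\Phi(x)+\Phi(Tx)+\cdots+\Phi(T^{n-1}x)$ exhibits, on the $j$-th coordinate, polynomial growth of exact degree $j$ in $n$. Let $\Theta=\chi_1\otimes\cdots\otimes\chi_N$ be the character of $\Z_{k_1}\oplus\cdots\oplus\Z_{k_N}$ built from the $\chi_r$'s, and set $U=V^T_{\Theta\circ\Phi}$ on $\ldwaxm$ via $(Uf)(x)=\Theta(\Phi(x))f(Tx)$, so that
$$
\bigl(U^{P(n)}f\bigr)(x)=\Theta\!\left(\Phi^{(P(n))}(x)\right)f\!\left(T^{P(n)}x\right),\qquad P(n)=j_1n+\cdots+j_Nn^N.
$$

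To identify the $p$-limit, I would choose a rigidity sequence $(q_n)$ for $T$ along which each $U^{r_jn^j}$ converges (so that~(\ref{plimitpol1})--(\ref{plimitpol2}) are in force), then invoke Proposition~\ref{plimitpol} to upgrade these ordinary limits to a common idempotent $p\in E(\beta\N)$ for which $p\,\text{-}\!\lim U^{P(n)}$ exists for every $P\in\cp_{\leq N}$. By the cocycle's engineered joint distribution, on an appropriate invariant cylinder subspace $\cj$ this $p$-limit is scalar multiplication by
$$
\int_K \chi_1(j_1a_1)\cdots\chi_N(j_Na_N)\,d\mu_K(a_1,\ldots,a_N)=\int_K\xi\bigl((a_1,\ldots,a_N),(\ov{j}_1,\ldots,\ov{j}_N)\bigr)\,d\mu_K,
$$
where $\mu_K$ is Haar measure on $K$. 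By orthogonality of characters this integral equals $1$ precisely when $\xi(\cdot,(\ov{j}_1,\ldots,\ov{j}_N))\equiv 1$ on $K$, i.e.\ when $(\ov{j}_1,\ldots,\ov{j}_N)\in K^\perp=G$, and equals $0$ otherwise. This yields $\widetilde{G}(p,N,U,\cj)=G$, completing (c)$\Rightarrow$(a).

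The principal obstacle is the cocycle construction: one must produce a single $\Phi$ and a single rigidity sequence $(q_n)$ along which the $j$-th coordinate of the Birkhoff sums simultaneously has growth of exact degree $j$ modulo $k_j$, and along which the joint distribution of $\Phi^{(P(q_n))}$ on $K$ (after averaging through an IP-subsequence) equals Haar measure on $K$. This is the technical heart of the argument; once it is in place, the passage to $p$-limits is a straightforward application of Proposition~\ref{plimitpol} and Lemma~\ref{plimit}, and the identification of the limit projection as annihilation by $\xi$ is purely algebraic and reduces back to Proposition~\ref{sk3}.
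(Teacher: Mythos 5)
Your overall architecture coincides with the paper's: (b)$\Leftrightarrow$(c) is delegated to Proposition~\ref{sk3}, (a)$\Rightarrow$(b) is the observation already made when $\widetilde{G}(p,N,U,\cj)$ was defined, and (c)$\Rightarrow$(a) is attacked by building a weighted operator $U=V^T_{\Theta\circ\Phi}$ over an odometer, computing ordinary weak limits along the odometer heights via character orthogonality on $K$, and then upgrading to a $p$-limit with Proposition~\ref{plimitpol} and Lemma~\ref{plimit}. The identification of the limit through $\int_K\xi(\cdot,(\ov{j}_1,\ldots,\ov{j}_N))\,d\la_K$ being $1$ or $0$ according to whether $(\ov{j}_1,\ldots,\ov{j}_N)\in K^\perp$ is exactly the paper's Lemma~\ref{uproszczenie} combined with Proposition~\ref{critWC}.

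However, there is a genuine gap, and you have located it yourself: the cocycle $\Phi$ is postulated, not constructed. The entire difficulty of (c)$\Rightarrow$(a) lies in producing a single $\Phi=(\va_1,\ldots,\va_N)$ and a single sequence $(n_t)$ satisfying simultaneously (i) $\va_r^{(n_t^s)}\to0$ in measure for all $s\neq r$, (ii) $\va_r^{(in_t^r)}=i\,\va_r^{(n_t^r)}$ off a set of measure tending to $0$, and (iii) $\bigl(\va_1^{(n_t)},\ldots,\va_N^{(n_t^N)}\bigr)_\ast(\mu)\to\la_K$ --- note that different coordinates must be sampled at \emph{different} powers $n_t^r$ of the same heights, which is what your phrase ``growth of exact degree $j$'' is gesturing at but does not deliver. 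The paper achieves this in Section~\ref{construction} with a Morse cocycle over an $(n_t)$-odometer satisfying $\rho_{t+1}=n_t^{N+1}\rho'_{t+1}$, by an explicit and rather delicate choice of input data $b^{(t)}_{j,l}$ (constant blocks of length governed by $n_t^{l-1}$, interleaved with cancellation terms $-n_tc_{l,\cdot,t}$ so that Lemma~\ref{konstrukcja_phi}(c) forces $\Sigma_t=0$); Proposition~\ref{czestotliwosc} then verifies (i)--(iii). Two smaller inaccuracies: the cocycle does not take values in $K$ pointwise (only the joint distribution of the mixed-power Birkhoff sums converges to $\la_K$), and no auxiliary ``rigidity sequence $(q_n)$ for $T$'' separate from the heights $n_t$ is needed --- the heights themselves do the job since $n_t\ov{1}\to0$ in $X$. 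Without the explicit construction, the implication (c)$\Rightarrow$(a), which is the substance of the theorem, remains unproved.
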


We have already seen that (a) implies (b) and that (b) and (c) are
equivalent. It remains to show that (c)  implies (a) and for that
we need a relevant construction.

Let us note in passing the following immediate consequence of
Theorem~\ref{p-group-main}.

\begin{Cor}\label{trivialgroup} For any choice of
$(k_1,\ldots,k_N)\in\N^N$, the trivial group is an $N$-periodic
rigidity group (with respect to the period $(k_1,\ldots,k_N)$). In other words, for some $U\in\cu(\ch)$ and $p\in E(\beta\N)$, if
$P(x)=a_1x+\ldots+a_Nx^N$ with $a_i\in\Z$ ($i=1,\ldots,N)$ then $$
p\,\text{-}\!\lim_{n\in\N} U^{P(n)}=\left\{
\begin{array}{ccc} Id & \mbox{if} & a_i\equiv0\;\;\mbox{mod}\;\;
k_i,\;i=1,\ldots,N,\\
0&\mbox{otherwise.}& \end{array}\right.$$
\end{Cor}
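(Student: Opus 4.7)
The plan is to deduce the corollary as a one-line application of Theorem~\ref{p-group-main} to the trivial subgroup. I would take $G := \{0\}\subset V = \Z_{k_1}\oplus\ldots\oplus\Z_{k_N}$ and observe that it satisfies condition~(b)---the $(\ast)$-property---vacuously, since the only element of $G$ is $(0,\ldots,0)$. Equivalently, I would note that $\{0\}^\perp = \widehat{V} = \Z_{k_1}\times\ldots\times\Z_{k_N}$ is trivially an algebraic coupling (it has full projection on every coordinate because it equals the whole product), so condition~(c) is also met.

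Invoking Theorem~\ref{p-group-main}, this yields $U\in\cu(\ch)$ and $p\in E(\beta\N)$ realizing $\{0\}$ as an $N$-periodic rigidity group with period exactly $(k_1,\ldots,k_N)$. By~(\ref{ppp2}) the corresponding $N$-rigidity group $G(p,N,U)$ is the preimage $(\pi_{k_1}\times\ldots\times\pi_{k_N})^{-1}(\{0\})$, that is, precisely those $P(x) = a_1 x + \ldots + a_N x^N$ with $a_i\equiv 0\pmod{k_i}$ for every~$i$. By the very definition of $N$-rigidity group, these are the polynomials for which $p\,\text{-}\!\lim_{n\in\N} U^{P(n)} = Id$, while for every other $P\in\cp_{\leq N}$ one has $p\,\text{-}\!\lim_{n\in\N} U^{P(n)} = 0$, which is the dichotomy in the statement. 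A small bookkeeping remark I would include: writing each coefficient as $a_r = j_r + m_r k_r$ with $0\leq j_r < k_r$, the relations~(\ref{cla1}) together with Lemma~\ref{plimit-ciaglosc} show that $p\,\text{-}\!\lim_{n\in\N} U^{P(n)} = p\,\text{-}\!\lim_{n\in\N} U^{j_1 n + \ldots + j_N n^N}$, making transparent the passage from integer to modular coefficients.

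There is no real obstacle; the non-trivial content lies in Theorem~\ref{p-group-main}, and especially in its (c)~$\Rightarrow$~(a) direction, which is precisely what the forthcoming construction of weighted unitary operators over odometers will supply. The corollary merely records that the minimal possible subgroup---namely $\{0\}$---already falls within the scope of that theorem, so the vector of periods $(k_1,\ldots,k_N)$ can be freely prescribed.
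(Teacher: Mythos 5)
Your proof is correct and takes essentially the same route as the paper, which states this corollary as an immediate consequence of Theorem~\ref{p-group-main}: one applies it to the trivial subgroup (equivalently, to the full algebraic coupling $K=\Z_{k_1}\times\ldots\times\Z_{k_N}$, whose annihilator is $\{0\}$ since the $\chi_j$ are injective) and then pulls back via~(\ref{ppp2}) to get the stated dichotomy for arbitrary integer coefficients. Nothing is missing.
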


We will now introduce the main probabilistic tools used in the
constructions which will be carried over in the next subsection.
These constructions will complete  the proof of
Theorem~\ref{p-group-main}, that is, given an algebraic coupling $K\subset \Z_{k_1}\times\ldots\times\Z_{k_N}$, we will find $U\in\cu(\ch)$ and $p\in E(\beta\N)$ such that $K^\perp=\widetilde{G}(p,N,U)$.

Assume  that $\xbm$ is a probability Borel space and let
$K\subset \Z_{k_1}\times\ldots\times \Z_{k_N}$ be an algebraic
coupling. Assume that
$Y=(Y_1,\ldots,Y_N):X\to\Z_{k_1}\times\ldots\times \Z_{k_N}$ is
measurable with $Y(X)=K$ and \beq\label{q6}
Y_\ast(\mu)=\la_K~\footnote{If Z is a random variable on
$(\Omega,{\cal F},P)$ taking values in a Borel space $(\Sigma, {\cal
A})$ then by $Z_\ast$ or $Z_\ast(P)$ we denote the {\em
distribution} of $Z$: $Z_\ast(A):=P(Z^{-1}(A))$ for $A\in \ca$.
Whenever $K$ is a compact group, $\lambda_K$ stands for its (normalized) Haar
measure.}.\eeq Given $(j_1,\ldots,j_N)\in
\Z_{k_1}\oplus\ldots\oplus \Z_{k_N}$, set
$$K_{j_1,\ldots,j_N}:=j_1\times\ldots\times j_N(K)=
\{(j_1c_1,\ldots,j_Nc_N):\:(c_1,\ldots,c_N)\in K\}.$$ Then
$K_{j_1,\ldots,j_N}\subset \Z_{k_1}\times\ldots\times \Z_{k_N}$ is
a subgroup and if we denote
$Y_{j_1,\ldots,j_N}=(j_1Y_1,\ldots,j_NY_N)$ then (in view
of~(\ref{q6}))
$$\left(Y_{j_1,\ldots,j_N}\right)_\ast(\mu)=\la_{K_{j_1,\ldots,j_N}}.$$
%A limit version (cf.\ Lemma~\ref{uproszczenie}) of the following simple lemma
%will provide a main tool of the forthcoming construction.

%\begin{Lemma}\label{q7} Under the above notation, the distribution
%of the random variable $(\chi_1\ot\ldots\ot\chi_N)\circ
%Y_{j_1,\ldots,j_N}$ is the Haar measure of a finite subgroup
%$F_{j_1,\ldots,j_N}$ of $\,\bs^1$. Moreover,
%$F_{j_1,\ldots,j_N}=\{1\}$ if and only if $(j_1,\ldots,j_N)$
%annihilates $K$.
%\end{Lemma}
%\begin{proof} Since $\chi_1\ot\ldots\ot\chi_N|_{K_{j_1,\ldots,j_N}}$ is a
%homomorphism, the distribution of $(\chi\ot\ldots\ot\chi_N)\circ
%Y_{j_1,\ldots,j_N}$ is the Haar measure of the finite subgroup
%$F_{j_1,\ldots,j_N}:=(\chi_1\ot\ldots\ot\chi_N)(K_{j_1,\ldots,j_N})$
%of $\,\bs^1$. To complete the proof it is enough to apply

%Proposition~\ref{sk3}.
%\end{proof}

%TEKST PRZENIESIONY Z 4.1
%-----------------------------------------------------

Assume that $X$ is a compact monothetic metric group
$\mu=\la_X$ the normalized Haar measure on the $\sigma$-algebra $\cb$ of Borel
subsets of $X$. Let $Tx=x+x_0$, where $x_0$ is such that $\{nx_0:\:n\in\Z\}$ is dense
in $X$. Assume that $\xi:X\to\cir^1$ is measurable.

Let $V_\xi^T\in{\cal U}\left(L^2\xbm\right)$, $V_\xi^T(f)=\xi\cdot
f\circ T$, thus  $(V_\xi^T)^n(f)=\xi^{(n)}\cdot
f\circ T^n$ for $n\in\Z$, where
$$\xi^{(n)}(x)=\left\{\begin{array}{lll}
\xi(x)\cdot\ldots\cdot\xi(T^{n-1}x)&\mbox{if}& n>0\\
1&\mbox{if}&n=0\\
(\xi(T^nx)\cdot\ldots\cdot\xi(T^{-1}x))^{-1}&\mbox{if}&
n<0.\end{array}\right.$$ Then, the cocycle
identity $\xi^{(m+n)}(x)=\xi^{(m)}(x)\cdot\xi^{(n)}(T^mx)$ holds. Often,
$\xi$ itself is called a cocycle. By the same token, we can define
cocycles taking values in general compact Abelian groups.

If we take $f\in L^2\xbm$ to be a character of $X$ then, for each $n\in\Z$, we have $$\left\langle
\left(V^T_\xi\right)^{n}f,f\right\rangle=
\int_X\xi^{(n)}(x)f(T^{n}x)\ov{f(x)}\,d\mu(x)=$$
$$
f(nx_0)\int_X\xi^{(n)}(x)|f(x)|^2\,d\mu(x)=
f(x_0)^{n}\int_X\xi^{(n)}(x)\,d\mu(x)=
f(x_0)^{n}\left\langle
\left(V^T_\xi\right)^{n}1,1\right\rangle.$$ It follows that if $(n_t)$ is a strictly increasing sequence of natural numbers then
\beq\label{gext5}\begin{array}{l}
\mbox{if}\;\;\int_X\xi^{(n_t)}(x)\,d\mu(x)\to0\;\;\mbox{then}\;\;
\left(V^T_\xi\right)^{n_t}\to 0\;\mbox{weakly}.\end{array}\eeq
If, moreover,  $n_tx_0\to 0$ in
$X$ then, by the same argument as above, we have
 \beq\label{gext6}
\xi^{(n_t)}\to1\;\mbox{in measure} \;\mbox{implies}\;\;
\left(V^T_{\xi}\right)^{n_t}\to Id\;\mbox{strongly}.\eeq
We recall that by  footnote~\ref{stopkaW}, $\xi^{(n_t)}\to1$ in measure if and only if $\int_X \xi^{(n_t)}\,d\mu\to1$.

Assume now that $\va:X\to  H$ is a cocycle, where $H$ is a compact
Abelian metrizable group. Assume also that $\chi\in\widehat{H}$.

\begin{Prop}\label{critWC}  Assume  that $(n_t)$ is a strictly increasing sequence of natural numbers and $n_tx_0\to 0$
(in $X$). Assume moreover that
$$
\left(\chi\circ\va^{(n_t)}\right)_\ast(\mu)\to\la_F$$ where
$F\subset\bs^1$ is a closed subgroup.~\footnote{That is, either $F=\bs^1$ or $F$ is finite.} Then
$\left(V^T_{\chi\circ\va}\right)^{n_t}\to0$ if $F\neq\{1\}$ and
$\left(V^T_{\chi\circ\va}\right)^{n_t}\to Id$ if $F=\{1\}$.
\end{Prop}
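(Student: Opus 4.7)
The plan is to split into two cases according to whether $F=\{1\}$ or $F\neq\{1\}$, and in each case reduce to one of the two auxiliary facts~(\ref{gext5}) and~(\ref{gext6}) already recorded just above the statement.

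First, assume $F\neq\{1\}$. Then I would compute the integral $\int_{\bs^1}z\,d\la_F(z)$ and observe that it equals $0$: indeed, $\la_F$ is the Haar measure on a nontrivial closed subgroup of $\bs^1$, and the restriction to $F$ of the inclusion character $z\mapsto z$ is a nontrivial character of $F$, so integrates to zero against $\la_F$. Since $z\mapsto z$ is continuous and bounded on $\bs^1$, weak convergence of $(\chi\circ\varphi^{(n_t)})_\ast(\mu)$ to $\la_F$ yields
\[
\int_X \chi\circ\varphi^{(n_t)}(x)\,d\mu(x)=\int_{\bs^1}z\,d\bigl(\chi\circ\varphi^{(n_t)}\bigr)_\ast(\mu)(z)\longrightarrow \int_{\bs^1}z\,d\la_F(z)=0.
\]
Applying~(\ref{gext5}) to the cocycle $\xi:=\chi\circ\varphi$ then gives $(V^T_{\chi\circ\varphi})^{n_t}\to 0$ weakly, as required.

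Next, assume $F=\{1\}$, so $\la_F=\delta_1$ is the Dirac mass at~$1$. Convergence in distribution of a sequence of random variables to a constant is equivalent to convergence in probability to that constant, so $\chi\circ\varphi^{(n_t)}\to 1$ in measure $\mu$ (alternatively, applying the same weak-convergence computation as above, $\int_X \chi\circ\varphi^{(n_t)}\,d\mu\to 1$, and by the footnote right after~(\ref{gext6}) this is equivalent to convergence in measure to $1$). Combining this with the hypothesis $n_tx_0\to 0$ in $X$, I would invoke~(\ref{gext6}) with $\xi=\chi\circ\varphi$ to conclude $(V^T_{\chi\circ\varphi})^{n_t}\to Id$ strongly.

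There is no real obstacle here; both cases are essentially immediate applications of~(\ref{gext5}) and~(\ref{gext6}). The only point worth flagging is the distribution-to-constant equivalence used in the second case, which is a standard consequence of the Portmanteau theorem applied to closed neighbourhoods of $1\in\bs^1$, and the elementary character-theoretic observation $\int z\,d\la_F(z)=0$ for $F\neq\{1\}$ in the first case.
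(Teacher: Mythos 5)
Your proof is correct and follows essentially the same route as the paper's: both compute $\int_X\chi\circ\va^{(n_t)}\,d\mu=\int_{\bs^1}z\,d(\chi\circ\va^{(n_t)})_\ast(\mu)\to\int_{\bs^1}z\,d\la_F$ and then invoke~(\ref{gext5}) and~(\ref{gext6}). You merely spell out the two details the paper leaves implicit (that $\int_{\bs^1}z\,d\la_F=0$ for nontrivial $F$, and the equivalence of $\int\xi^{(n_t)}\,d\mu\to1$ with convergence in measure to $1$), both correctly.
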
\begin{proof} Since
$$
\int_X(\chi\circ\va)^{(n_t)}\,d\mu=
\int_X\chi\circ\va^{(n_t)}\,d\mu=
\int_{\bs^1}z\,d\left(\chi\circ\va^{(n_t)}\right)_\ast(\mu)\to
\int_{\bs^1}z\,d\la_F,$$ the result follows directly
from~(\ref{gext5}) and~(\ref{gext6}).
\end{proof}

Note that, in particular, whenever $L\subset H$ is a (non-trivial)
compact subgroup, \beq\label{gext7} \mbox{
$(\va^{(n_t)})_\ast\to\la_L$ implies
$\int_X\chi(\va^{(n_t)}(x))\,d\mu(x)\to0$} \eeq for each character
$\chi\in\widehat{H}$, $\chi(L)\neq\{1\}$. This, together
with~(\ref{gext5}), yields a criterion for a weak convergence to
zero of our special weighted operators
$(V^T_{\chi\circ\va})^{n_t}$ along any strictly increasing sequence $(n_t)$ of natural numbers.

Denote $\Theta=\chi_1\ot\ldots\ot\chi_N$ and suppose that we are
given a cocycle $\Phi:=(\varphi_1,\ldots,\va_N):X\to
\Z_{k_1}\times\ldots\times\Z_{k_N}$ (over the $(n_t)$-odometer
$T$, see Section~\ref{construction} below). We intend to study the weighted operator
$U=V^{T}_{\Theta\circ\Phi}$. We are interested in computing the weak
limits of $(U^{j_1n_t+j_2n_t^2+\ldots+j_Nn_t^N})_{t\geq1}$ for
$(j_1,\ldots,j_N)\in \Z_{k_1}\oplus\ldots\oplus\Z_{k_N}$. Using the cocycle identity, we have
$$
\int_X\Theta\left(\va_1^{(j_1n_t+j_2n_t^2+\ldots+j_Nn_t^N)},
\ldots,\va_N^{(j_1n_t+j_2n_t^2+\ldots+j_Nn_t^N)}\right)\,d\mu$$$$=
\int_X\Theta\left(\va_1^{(j_1n_t)}+\va_1^{(j_2n_t^2)}\circ
T^{j_1n_t}+\ldots+\va_1^{(j_Nn_t^N)}\circ
T^{j_1n_t+j_2n_t^2+\ldots+j_{N-1}n_t^{N-1}}\right.,$$$$\va_2^{(j_2n_t^2)}+
\va_2^{(j_1n_t)}\circ T^{j_2n_t^2}+\ldots+\va_2^{(j_Nn_t^N)}\circ
T^{j_1n_t+j_2n_t^2+\ldots+j_{N-1}n_t^{N-1}},\ldots,$$
$$\left.\va_N^{(j_Nn_t^N)}+
\va_N^{(j_1n_t)}\circ
T^{j_Nn_t^N}+\ldots+\va_N^{(j_{N-1}n_t^{N-1})}\circ
T^{j_1n_t+\ldots+j_{N-2}n_t^{N-2}+j_Nn_t^N}\right)\,d\mu.$$

Our basic assumption on the cocycles $\va_1,\ldots,\va_N$ is the
following: For each $r=1,\ldots,N$, whenever
$s\in\{1,\ldots,N\}\setminus\{r\}$, \beq\label{ba}
\va_r^{(n_t^s)}\to 0\;\;\mbox{in measure as}\;t\to\infty.\eeq It
follows that  whenever $r=1,\ldots,N$,
$s\in\{1,\ldots,N\}\setminus\{r\}$ and
$i=1,\ldots,k_s-1$,
$$
\va_r^{(in_t^s)}\to 0\;\;\mbox{in measure}$$ and hence
$$\va_r^{(in_t^s)}\circ T^{k}\to0\;\;
\mbox{in measure as}\;\;t\to\infty$$ for each $k\in\Z$. It follows
that under the assumption~(\ref{ba}) (and tacitly assuming that
one of the limits below exists), we have \beq\label{ba1}
\begin{array}{l}
\lim_{t\to\infty}\!\int_X\!\Theta\!\left(\va_1^{(j_1n_t+
j_2n_t^2+\ldots+j_Nn_t^N)},
\ldots,\va_N^{(j_1n_t+j_2n_t^2+\ldots+j_Nn_t^N)}\right)d\mu\!=\\
\lim_{t\to\infty}\int_X\Theta\left(\va_1^{(j_1n_t)},\va_2^{(j_2n_t^2)},
\ldots,\va_N^{(j_Nn_t^N)}\right)\,d\mu.\end{array}\eeq Therefore,
by Proposition~\ref{critWC}, the weak limit of
$(U^{j_1n_t+j_2n_t^2+\ldots+j_Nn_t^mN})_{t\geq1}$ (exists and) depends only on
$$
\lim_{t\to\infty}\int_X\Theta
\left(\va_1^{(j_1n_t)},\va_2^{(j_2n_t^2)},
\ldots,\va_N^{(j_Nn_t^N)}\right)\,d\mu=$$$$
\lim_{t\to\infty}\int_X\chi_1\left(\va_1^{(j_1n_t)}\right)\cdot
\chi_2\left(\va^{(j_2n_t^2)}\right)\cdot\ldots\cdot
\chi_N\left(\va_N^{(j_Nn_t^N)}\right)\,d\mu.$$ Another
characteristic feature of the forthcoming construction is that for
each $r=1,\ldots,N$ and $i=1,\ldots,k_r$ \beq\label{ba2}
\mu\left(\left\{x\in X:\:\va_r^{(in_t^r)}(x)\neq
i\va_r^{(n_t^r)}(x)\right\}\right)\to 0.\eeq This implies
\beq\label{ba3}
\begin{array}{l}
\lim_{t\to\infty}\!\int_X\!\Theta\!
\left(\va_1^{(j_1n_t+j_2n_t^2+\ldots+ j_Nn_t^N)},\!
\ldots,\!\va_N^{(j_1n_t+j_2n_t^2+\ldots+j_Nn_t^N)}\right)d\mu\\
=\lim_{t\to\infty}\int_X\chi_1\ot\ldots\ot\chi_N\left(j_1\times
\ldots\times j_N( \va_1^{(n_t)},
\ldots,\va_N^{(n_t^N)})\right)\,d\mu\end{array}\eeq and since
$\va_r$ takes values in $\Z_{k_r}$ (by taking $i=k_r$ in~(\ref{ba2})), we will have \beq\label{ba3prime}
\mu\left(\left\{x\in X:\:\va_r^{(k_rn_t^r)}(x)\neq
0\right\}\right)\to 0.\eeq It follows from~(\ref{ba3prime}) that
\beq\label{co2}
\int_X\chi_r\left(\va_r^{(k_rn_t^r)}\right)\,d\mu\to1. \eeq
 Moreover,
our construction of $\Phi=(\varphi_1,\ldots,\va_N)$ will ensure
that for each $(j_1,\ldots,j_N)\in \Z_{k_1}\oplus\ldots\oplus
\Z_{k_N}$, we will have \beq\label{co3}\begin{array}{l}
\lim_{t\to\infty}\int_X\chi_1\ot\ldots\ot\chi_N\left(j_1\times
\ldots\times j_N( \va_1^{(n_t)},
\ldots,\va_N^{(n_t^N)})\right)\,d\mu\\
=\left\{\begin{array}{lll}
1&\mbox{if}&(j_1,\ldots,j_N)\in G\\
0&\mbox{otherwise.}&\end{array}\right.\end{array}\eeq  Since for
each $i=1,\ldots,k_r-1$,
$(0,\ldots,0,\stackrel{r}{i},0,\ldots,0)\notin G$ (in view of the
$(\ast)$-property, we have the following:
 \beq\label{co1}
\int_X\chi_r\left(\va_r^{(in_t^r)}\right)\,d\mu\to0
\;\;\mbox{for}\;\; r=1,\ldots,N\;\mbox{and}\;i=1,\ldots,k_r-1.\eeq The key property of $\Phi$
will be that the assumption~(\ref{upr}) of the lemma below will be
satisfied.
\begin{Lemma}\label{uproszczenie} Assume that
$$\Phi=(\varphi_1,\ldots,\varphi_N):X\to
\Z_{k_1}\times\ldots\times\Z_{k_N}$$ satisfies \beq\label{upr}
\left(\varphi_1^{(n_t)},\ldots,\varphi_N^{(n_t^N)}\right)_\ast(\mu)
\to\la_K,\eeq
where $K=G^\perp$. Then (\ref{co3}) holds.\end{Lemma}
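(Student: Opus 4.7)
The plan is to push the integral forward to $\Z_{k_1}\times\ldots\times\Z_{k_N}$, pass to the limit using the weak convergence hypothesis~(\ref{upr}), and then invoke the standard orthogonality of characters of a compact (here finite) Abelian group. The key identification supplied by the previous subsection is that
$$\chi_1\otimes\ldots\otimes\chi_N\bigl(j_1\times\ldots\times j_N(c_1,\ldots,c_N)\bigr)=\xi\bigl((c_1,\ldots,c_N),(j_1,\ldots,j_N)\bigr),$$
so the integrand in~(\ref{co3}) is, as a function of $(c_1,\ldots,c_N)$, exactly the character $\xi(\cdot,(j_1,\ldots,j_N))$ of $\Z_{k_1}\times\ldots\times\Z_{k_N}$.

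Writing $\Phi^{(t)}:=(\varphi_1^{(n_t)},\ldots,\varphi_N^{(n_t^N)})$, the change-of-variables formula identifies the integral on the left of~(\ref{co3}) with
$$\int_{\Z_{k_1}\times\ldots\times\Z_{k_N}} \xi\bigl(c,(j_1,\ldots,j_N)\bigr)\,d\bigl(\Phi^{(t)}\bigr)_\ast(\mu)(c).$$
Since the ambient group $\Z_{k_1}\times\ldots\times\Z_{k_N}$ is finite and discrete, the integrand is continuous, so the assumed weak convergence $(\Phi^{(t)})_\ast(\mu)\to\lambda_K$ yields
$$\lim_{t\to\infty}\int_X \chi_1\otimes\ldots\otimes\chi_N\bigl(j_1\times\ldots\times j_N\Phi^{(t)}\bigr)\,d\mu=\int_K\xi\bigl(c,(j_1,\ldots,j_N)\bigr)\,d\lambda_K(c).$$

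It remains to evaluate this last integral. The map $c\mapsto\xi(c,(j_1,\ldots,j_N))$ is a character of $K$. By Proposition~\ref{sk3}(i) we have $G=(G^\perp)^\perp=K^\perp$, hence $(j_1,\ldots,j_N)\in G$ if and only if $\xi(c,(j_1,\ldots,j_N))=1$ for every $c\in K$. In the first case the integrand is identically $1$ on $K$ and the integral equals $1$; in the second case it is a non-trivial character of the compact Abelian group $K$, so orthogonality of characters forces the integral against normalized Haar measure $\lambda_K$ to vanish. This gives precisely~(\ref{co3}).

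No substantial obstacle arises: the content of the lemma is essentially the orthogonality relation for characters, and all the work is in the bookkeeping already carried out in the previous subsection (in particular, the injectivity of each $\chi_r$ is what makes $\xi$ a perfect pairing and hence makes Proposition~\ref{sk3}(i) applicable). The one point worth double-checking is that the continuity of the integrand suffices for the weak-convergence step---this is immediate because the target of $\Phi^{(t)}$ is a finite discrete space, so weak convergence of $(\Phi^{(t)})_\ast(\mu)$ is just pointwise convergence of the mass function.
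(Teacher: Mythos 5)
Your proof is correct and follows essentially the same route as the paper's: both push the distribution $\bigl(\varphi_1^{(n_t)},\ldots,\varphi_N^{(n_t^N)}\bigr)_\ast(\mu)$ forward, pass to the limit $\lambda_K$, identify membership in $G=(G^\perp)^\perp$ via Proposition~\ref{sk3}(i), and conclude by orthogonality of characters (the paper phrases this last step as integrating $z$ against the Haar measure of the finite image subgroup $F\subset\bs^1$, citing Proposition~\ref{critWC}, whereas you integrate the character directly over $K$ — the same computation). No gaps.
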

\begin{proof}We have
$$
\left(\chi_1\ot\ldots\ot\chi_N\circ (j_1\times\ldots\times j_N)
\left(\varphi_1^{(n_t)},\ldots,\varphi_N^{(n_t^N)}\right)\right)
_\ast(\mu)=$$
$$
\left(\chi_1\ot\ldots\ot\chi_N\circ (j_1\times\ldots\times
j_N)\right)_\ast\left(
\left(\varphi_1^{(n_t)},\ldots,\varphi_N^{(n_t^N)}\right)
_\ast(\mu)\right)\to$$$$\left(\chi_1\ot\ldots\ot\chi_N\circ
(j_1\times\ldots\times j_N)\right)_\ast(\la_K).
$$
The measure $\left(\chi_1\ot\ldots\ot\chi_N\circ
(j_1\times\ldots\times j_N)\right)_\ast(\la_K)$ is the Haar
measure of a finite group $F\subset\bs^1$. By
Proposition~\ref{sk3}, $F=\{1\}$ if and only if
$(j_1,\ldots,j_N)\in G$. The result follows from
Proposition~\ref{critWC} (cf.~(\ref{gext7})).
\end{proof}

 Now,
(\ref{ba3}),~(\ref{co1}) and~(\ref{co2}) imply that for
$r=1,\ldots,N$  \beq\label{co4}
U^{in^r_t}\to0\;\;\mbox{for}\;\;i=1,\ldots,k_r-1;\eeq
\beq\label{co44} U^{k_rn_t^r}\to Id.\eeq
Moreover,~(\ref{co3}) implies that \beq\label{co5}
U^{j_1n_t+\ldots+j_Nn_t^N}\to \left\{\begin{array}{lll}
Id&\mbox{if}&(j_1,\ldots,j_N)\in G\\
0&\mbox{otherwise.}&\end{array}\right.\eeq
We will always assume that
\beq\label{podzielnosc}{\rm lcm}(k_1,\ldots,k_N)|n_t\;\;\mbox{for all
$t\geq t_0$}\eeq (cf.~(\ref{plimitpol2})). Finally, apply
Proposition~\ref{plimitpol} to conclude
that~(\ref{co4}),~(\ref{co44}) and~(\ref{co5}) imply the existence
of $p\in E(\beta\N)$ such that for $r=1,\ldots,N$\beq\label{co6}
p\,\text{-}\!\lim_{n\in\N}
U^{in^r}=0\;\;\mbox{for}\;\;i=1,\ldots,k_r-1,\eeq \beq\label{co7}
p\,\text{-}\!\lim_{n\in\N} U^{k_rn^r}=Id\eeq and \beq\label{co8}
p\,\text{-}\!\lim_{n\in\N} U^{j_1n+\ldots+j_Nn^N}=
\left\{\begin{array}{lll}
Id&\mbox{if}&(j_1,\ldots,j_N)\in G\\
0&\mbox{otherwise.}&\end{array}\right.\eeq Therefore $G=G(p,N,U)$
and the proof of Theorem~\ref{p-group-main} is complete.

\subsection{Main construction}\label{construction}
%%%%%TO CO ZOSTA�O Z APPENDIX
Assume that $(n_t)_{t\geq1}$ is an increasing sequence of natural
numbers with $n_t|n_{t+1}$ for $t\geq1$. In other words
\beq\label{odom1}n_{t+1}=\rho_{t+1}n_t,
\eeq
where $n_0=1$ and the natural numbers $\rho_{t+1}$ satisfy $\rho_{t+1}\geq2$ for $t\geq0$.
If we denote
$\rho_0=1$ then for each $t\geq1$, $n_t=\Pi_{i=0}^t\rho_i$. Set $$
X=\Pi_{t=1}^\infty \Z_{\rho_t}.$$ We will view $X$ as a compact group (in the product topology) with the group law  given by the coordinate addition
with carrying the remainder to the right. Let $\mu$ denote the normalized Haar measure on $X$. Denote
$$
\ov1=(1,0,0,\ldots)$$ and notice that $X$ is a monothetic group since the set
$\{n\cdot\ov1:n\in\Z\}$ is dense in $X$.  We set $Tx=x+\ov1$ for $x\in X$ and call $T$ an {\em
odometer} or, more precisely, the $(n_t)$-{\em odometer}.

Let $D_0^t=\{x\in X:\:x_1=\ldots =x_{t}=0\}$. Then the sets
$T^{i}D^t_0=:D^t_i$ for $i=0,\ldots,n_t-1$ are pairwise disjoint, $\bigcup_{i=0}^{n_t-1}D^t_i=X$
and $T^{n_t}D^t_0=D^t_0$. In this
way we obtain a sequence $\cd^t:=(D^t_0,\ldots,D^t_{n_t-1})$,
$t\geq1$, of partitions of $X$.

Moreover, since $D^{t+1}_0\subset D^t_0$, the partition
$\cd^{t+1}$ refines $\cd^t$, see Figure~\ref{pic1prime}.

For $k=0,\ldots,\rho_{t+1}-1$, we denote $
C_{k}^{t+1}=D_{kn_t}^{{t+1}}\cup\ldots\cup D_{kn_t+n_t-1}^{{t+1}}$ -
the $k$th column of $ \cd^t$. \medskip

\begin{figure}[ht]
\centering
\includegraphics[width=250pt]{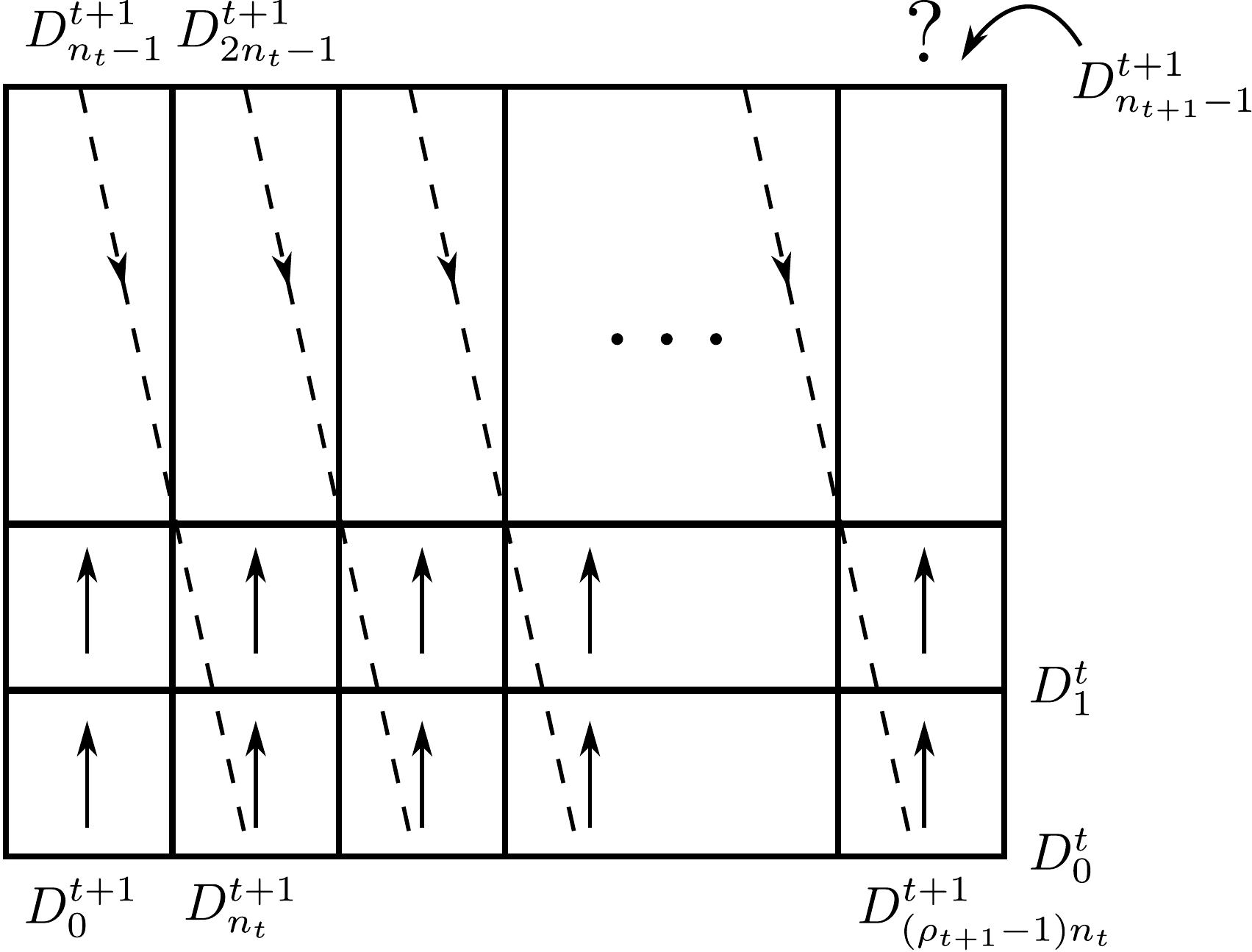}
\caption{At stage $t+1$ the dynamics of $T$ is  defined  on the complement of
$D^{t+1}_{n_{t+1}-1}$.} \label{pic1prime}
\end{figure}

Let $H$ be an Abelian group. Given the $(n_t)$-odometer $T$ we
will be working with so called {\em Morse cocycles} (see e.g.\
\cite{Gu}) $\va:X\setminus\{-\ov{1}\}\to H$.

\def\uv{\underline{v}}
\def\ov#1{\overline{#1}}
\def\wt{\widetilde}
\def\epv {{$\mbox{}$\hfill ${\Box}$\vspace*{1.5ex} }}
\def\vsp{\medskip}
\def\NN{\mathbb{N}}
\def\PP{\mathbb{P}}
\def\RR{\mathbb{R}}
\def\ZZ{\mathbb{Z}}
\def\Ker{{\rm Ker}}
%symbol dla zewn. sumy prostej albo produkt
\def\kart{\times}

The map $\va$ is determined by {\em input data}, namely, by a
collection of sequences $(b^{(t)}_j)_{j=0}^{\rho_t-2}$, $t\geq1$. To define $\va$, first, we define
auxiliary maps $\psi_t$ defined partially on $X$ (see formulas~(\ref{ff1}) and~(\ref{ff2}) below) with values in
$H$ and then we set
$$
\va(x)=\psi_t(x)
$$
for $t$ such that $x$ belongs to the domain of $\psi_t$ (Lemma~\ref{konstrukcja_phi}~(a) yields the
correctness of this definition).

Assume that  for any $t\in\N$ we are given a finite sequence $
b^{(t)}_0,\ldots,b^{(t)}_{\rho_t-2}\in H$. By induction on $t\ge 1$, we will now define:
\begin{itemize}
\item a sequence $a^{(t)}_0,\ldots,a^{(t)}_{n_t-2}\in H$,
\item values $\psi_t(x)\in H$ for $x\in X\setminus
D_{n_{t+1}-1}^{{t+1}}$.
\end{itemize}

 For $t=1$ we set: $a^{(1)}_j=b^{(1)}_j$ for $j=0,\ldots,\rho_1-2$
and
\beq\label{ff1}
\psi_1(x)=\left\{\begin{array}{ll}a^{(1)}_i&x\in D^{1}_i,
i=0,\ldots,n_1-2\\
b^{(2)}_j&x\in D^{2}_{(j+1)n_1-1},\; j=0,\ldots,\rho_2-2.
\end{array}
\right.\eeq

If $t\ge 1$ and  $a^{(t)}_j$ and $\psi_t$ are already defined, we set:\\
$a^{(t+1)}_{jn_t+s}=a^{(t)}_s$ for $s=0,\ldots,n_t-2$,
$j=0,\ldots,\rho_{t+1}-1$,\\
$a^{(t+1)}_{jn_t+n_t-1}=b^{(t+1)}_j$ for $j=0,\ldots,\rho_{t+1}-2$,
\beq\label{ff2}
\psi_{t+1}(x)=\left\{\begin{array}{ll}a^{(t+1)}_i&x\in
D^{{t+1}}_i,
i=0,\ldots,n_{t+1}-2\\
b^{(t+2)}_j&x\in D^{{t+2}}_{(j+1)n_{t+1}-1},\;
j=0,\ldots,\rho_{t+2}-2.
\end{array}
\right.
\eeq

We denote $\Sigma_t=a^{(t)}_0+\ldots+a^{(t)}_{n_t-2}$ for $t\ge 1$.
This construction can be visualized  at Figure~2.

\begin{figure}[ht]
\centering
\includegraphics[width=250pt]{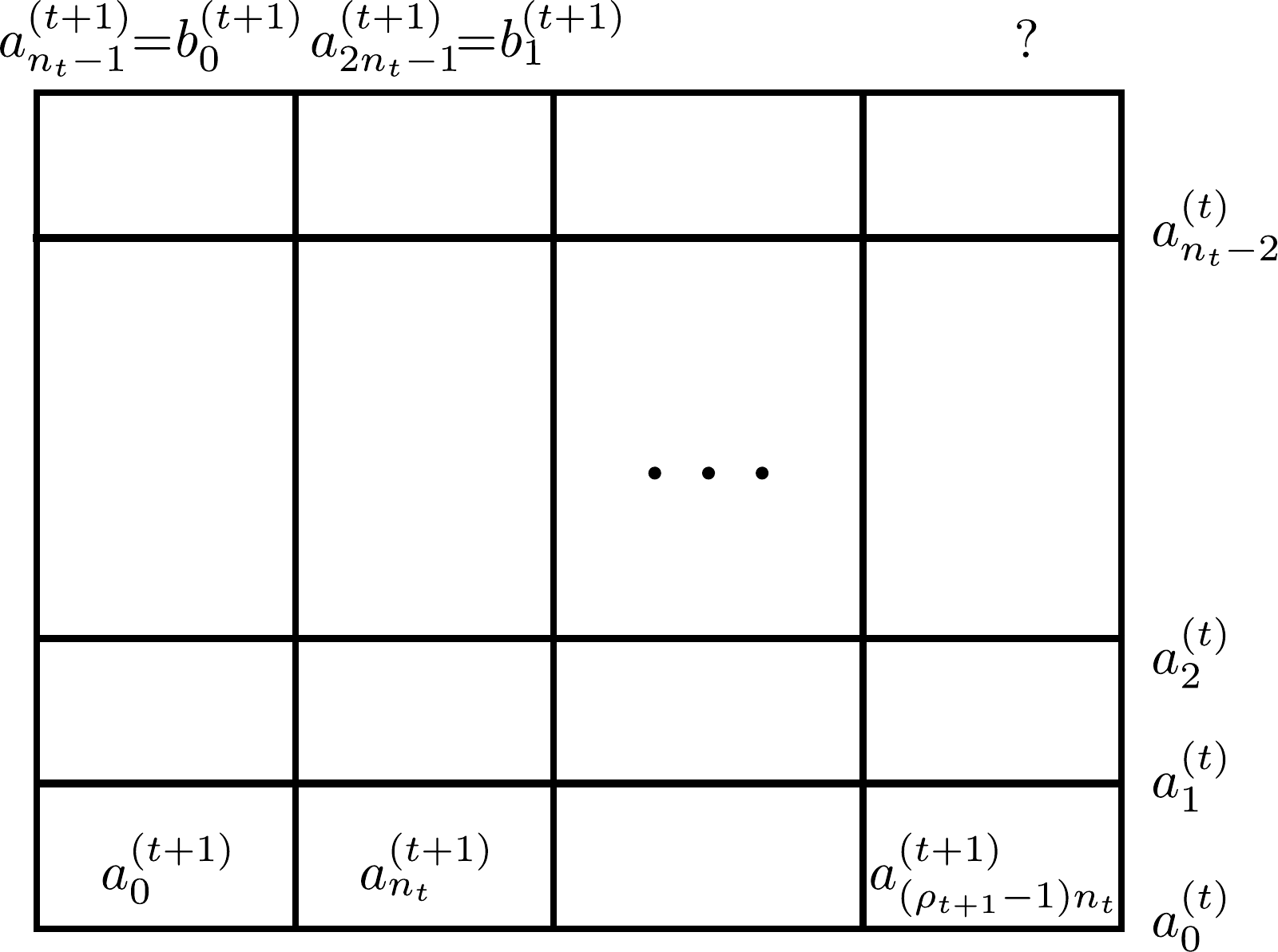}
\caption{At stage $t+1$ of the construction, we have $
a^{(t+1)}_{jn_t+s}=\va|_{D^{t+1}_{jn_t+s}}=\va|_{D^t_s}=a^{(t)}_s$
for $j=0,1,\ldots,\rho_{t+1}-1$ and $s=0,1,\ldots,n_t-2$. We
define $\va$ on $D^{t+1}_{jn_t-1}$, $j=1,2,\ldots,\rho_{t+1}-1$ by
setting $a^{(t+1)}_{jn_t-1}=b^{(t+1)}_{j-1}$; $\va$ remains
undefined on $D^{t+1}_{n_{t+1}-1}$.} \label{pic2prime}
\end{figure}

\begin{Lemma}\label{konstrukcja_phi} Under the notation above:

{\rm (a)} $\psi_t$ equals the restriction of $\psi_{t+1}$ to
$X\setminus D_{n_{t+1}-1}^{t+1}$.

{\rm (b)} Let $i\ge 1$, $0\le u\le \rho_{t+1}-i-1$. Then
$$
\psi_t^{(in_t)}(x):=\psi_t(x)+\ldots+\psi_t(T^{in_t-1}x)=i\Sigma_t+(b^{(t+1)}_u+\ldots
+b^{(t+1)}_{u+i-1})
$$
for $x\in   C^{t+1}_u$.

%{\rm (c)}
%$$
%\mu(\{x\in X:
%\sum_{i=0}^{jn_t}\psi_t(T^{i}(x))=j\Sigma_t+\sum_{s=k}^{k+j-1}b^{(t+1)}_s\})=\frac{1}{\rho_{t+1}}
%$$
%and those sets, for $k=0,...,\rho_{t+1}-j-1$ cover a part of $X$
%of $\mu$-measure $1-\frac{j}{\rho_{t+1}}$.

{\rm (c)} If $b^{(t)}_0+\ldots+b^{(t)}_{\rho_t-2}=0$, for any $t\ge
1$ then $\Sigma_t=0$ for any $t\geq1$. \end{Lemma}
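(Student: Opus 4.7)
The plan is to unfold the recursive construction of $\psi_t$, of the sequences $a^{(t)}_i$ and of $\Sigma_t$, and to verify each of the three assertions as a bookkeeping statement about how the $(t+1)$st Rokhlin tower refines the $t$th one. The geometric picture to keep in mind is that the column $C^{t+1}_v$ of the $(t+1)$st tower is a copy of the $t$th tower of height $n_t-1$ (on which $\psi_t$ replicates the sequence $a^{(t)}_0,\ldots,a^{(t)}_{n_t-2}$) with one extra top level carrying the new value $b^{(t+1)}_v$ whenever $v\le\rho_{t+1}-2$.

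For part~(a), I would simply compare (\ref{ff1}) and (\ref{ff2}) on the two mutually exclusive classes of levels of the $(t+1)$st tower that lie in the domain of $\psi_t$. If $x\in D^t_i$ with $i\le n_t-2$, then $x\in D^{t+1}_{jn_t+i}$ for a unique $j$, and the rule $a^{(t+1)}_{jn_t+s}=a^{(t)}_s$ gives $\psi_{t+1}(x)=a^{(t)}_i=\psi_t(x)$. If $x\in D^{t+1}_{(j+1)n_t-1}$ with $j\le\rho_{t+1}-2$, the rule $a^{(t+1)}_{jn_t+n_t-1}=b^{(t+1)}_j$ gives $\psi_{t+1}(x)=b^{(t+1)}_j=\psi_t(x)$. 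These two cases exhaust $X\setminus D^{t+1}_{n_{t+1}-1}$.

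For part~(b) I would use induction on $i$. In the base case $i=1$, fix $x\in C^{t+1}_u$ at level $s_0$ inside the column, so that the orbit $x,Tx,\ldots,T^{n_t-1}x$ covers the top $n_t-s_0$ levels of column $u$ followed by the bottom $s_0$ levels of column $u+1$. By the definition of $\psi_t$ the contributions from all non-top levels add up to $\sum_{s=0}^{n_t-2}a^{(t)}_s=\Sigma_t$ (each index $s$ appears exactly once across the two partial columns), while the unique top-level contribution from column $u$ is $b^{(t+1)}_u$. This yields $\psi_t^{(n_t)}(x)=\Sigma_t+b^{(t+1)}_u$, independently of $s_0$. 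The step $i\to i+1$ then follows from the cocycle identity $\psi_t^{((i+1)n_t)}(x)=\psi_t^{(in_t)}(x)+\psi_t^{(n_t)}(T^{in_t}x)$, the observation that $T^{in_t}$ sends $C^{t+1}_u$ onto $C^{t+1}_{u+i}$, and the base case at $u+i$ (which lies in the admissible range since the hypothesis of the step forces $u\le\rho_{t+1}-(i+1)-1$, hence $u+i\le\rho_{t+1}-2$).

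Part~(c) then follows by induction on $t$: for $t=1$ one has $\Sigma_1=\sum_{j=0}^{\rho_1-2}b^{(1)}_j=0$ directly, while assuming $\Sigma_t=0$, one splits $\Sigma_{t+1}=\sum_{i=0}^{n_{t+1}-2}a^{(t+1)}_i$ into $\rho_{t+1}-1$ full blocks of length $n_t$ (each equal to $\Sigma_t+b^{(t+1)}_j=b^{(t+1)}_j$ by part~(b) with $i=1$) plus a truncated final block of length $n_t-1$ (equal to $\Sigma_t=0$); the total collapses to $\sum_{j=0}^{\rho_{t+1}-2}b^{(t+1)}_j=0$. The only step that requires any genuine thought is the base case of~(b), namely that $\psi_t^{(n_t)}(x)$ is invariant as $x$ varies inside a single column $C^{t+1}_u$; this is exactly the content of the two-partial-columns reassembly argument sketched above, and everything else is direct verification from the defining recursions.
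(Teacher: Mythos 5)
Your proposal is correct and follows essentially the same route as the paper: part~(a) is the same case analysis on $k=jn_t+r$, part~(c) is the same block decomposition giving $\Sigma_{t+1}=\rho_{t+1}\Sigma_t+\sum_j b^{(t+1)}_j$, and your base case for part~(b) is exactly the computation the paper delegates to the caption of its Figure~3 (summing the two partial columns so that each $a^{(t)}_s$ appears once plus the single top value $b^{(t+1)}_u$). The only difference is that you make the extension from $i=1$ to general $i$ explicit via the cocycle identity and the shift $T^{in_t}:C^{t+1}_u\to C^{t+1}_{u+i}$, which the paper simply asserts ``follows directly from the definition.''
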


{\bf Proof.} (a) Let $x\in D_k^{{t+1}}$ for some $k\le n_{t+1}-2$.
Write $k=jn_t+r$, $0\le r\le n_t-1$, $0\le j\le \rho_{t+1}-1$.
Then $D_k^{{t+1}}\subset D^t_r$. If $r\le n_t-2$ then
$$
\psi_{t}(x)=a^{(t)}_r=a^{(t+1)}_{jn_t+r}=\psi_{t+1}(x). $$

If $r=n_t-1$ then $j\le \rho_{t+1}-2$ and
$$
\psi_{t}(x)=b^{(t+1)}_j=a^{(t+1)}_{jn_t+n_t-1}=\psi_{t+1}(x).
$$

(b) follows directly from the definition of $\psi_t^{(in_t)}(\cdot)$ (see Figure~\ref{pic3} for an explanation how to compute $\psi_t^{(n_t)}(x)$).

(c) follows by induction:
$$\begin{array}{l}\Sigma_{t+1}=a^{(t+1)}_0+\ldots+a^{(t+1)}_{n_{t+1}-2}=\\
\sum_{k=0}^{n_t-2}a^{(t+1)}_k +
a^{(t+1)}_{n_t-1}+\sum_{k=0}^{n_t-2}a^{(t+1)}_{k+n_t}+a^{(t+1)}_{2n_t-1}+\ldots
+a^{(t+1)}_{(\rho_{t+1}-1)n_t-1}+\sum_{k=0}^{n_t-2}a^{(t+1)}_{k+(\rho_{t+1}-1)n_t}=\\
\rho_{t+1}\Sigma_t+a^{(t+1)}_{n_t-1}+a^{(t+1)}_{2n_t-1}+\ldots
+a^{(t+1)}_{(\rho_{t+1}-1)n_t-1}=\rho_{t+1}\Sigma_t+b^{(t)}_0+\ldots+b^{(t)}_{\rho_{t+1}-2}.
\end{array}
$$

\begin{figure}[ht]
\centering
\includegraphics[width=250pt]{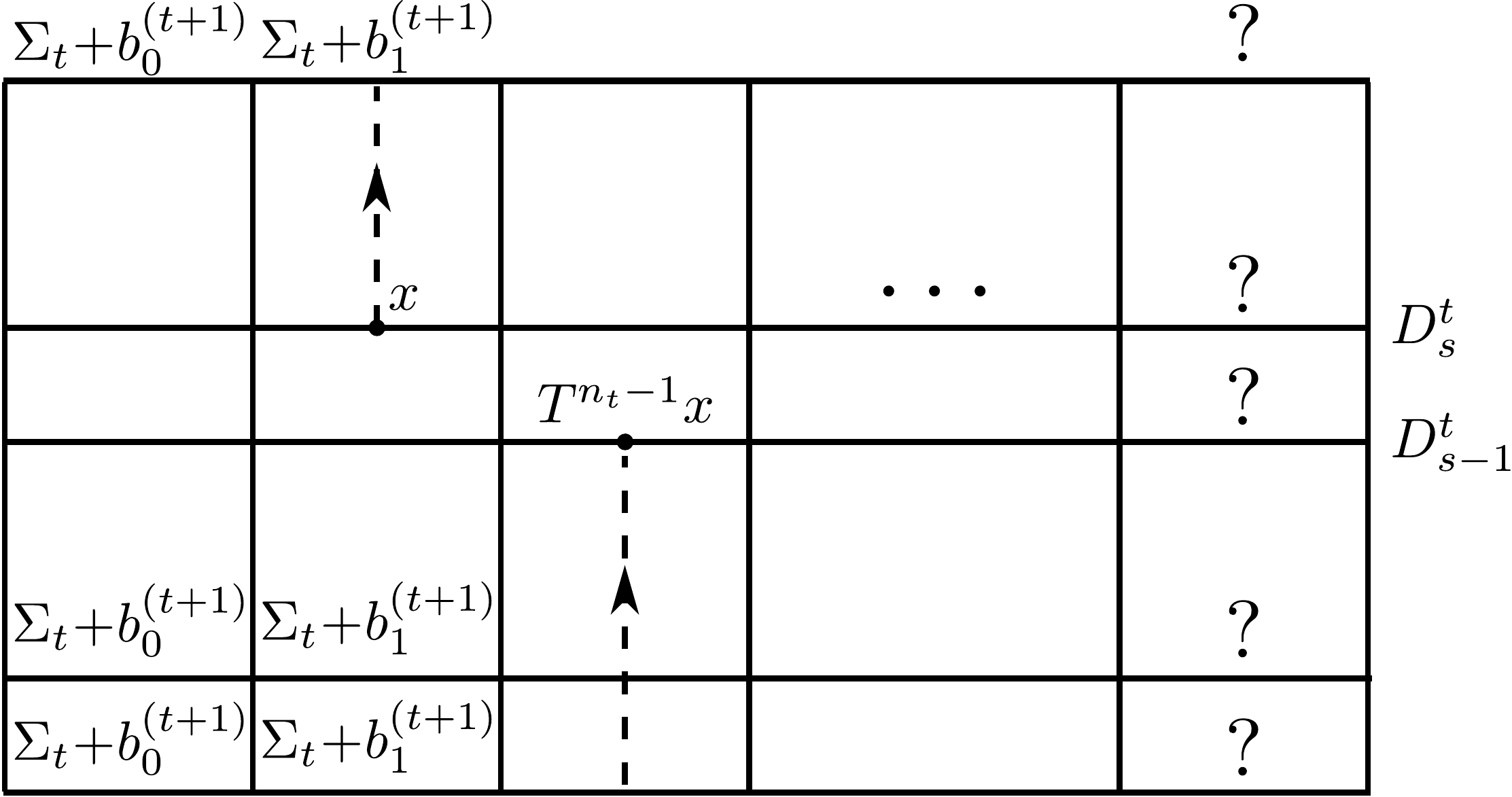}
\caption{The values taken by $\va^{(n_t)}$ viewed at stage $t+1$;
$\va^{(n_t)}$ is constant on each of the first $\rho_{t+1}-1$ columns
of ${\cal D}^t$. If $x\in D^{t+1}_{jn_t+s}$
($j=0,1,\ldots,\rho_{t+1}-2$, $s=0,1,\ldots,n_t-1$) then $
\va^{(n_t)}(x)=\sum_{i=0}^{n_t-1}\va(T^ix)
=a^{(t)}_s+\ldots+a^{(t)}_{n_t-2}+b^{(t+1)}_j+a_0^{(t)}+
\ldots+a^{(t)}_{s-1}=\Sigma_t+b^{(t+1)}_j.$} \label{pic3}
\end{figure}

%%%%%DOT�D APPENDIX
%%% MAIN CONSTRUCTION W NOOWEJ WERSJI
Recall that $K=G^\perp$ is an algebraic coupling of $\Z_{k_1},\ldots,\Z_{k_N}$.
Our aim is to define a Morse cocycle $\Phi=(\va_1,...,\va_N):X\setminus\{-\ov{1}\}\rightarrow
\ZZ_{k_1}\times\ldots\times\ZZ_{k_N}$ so that
(\ref{ba}),~(\ref{ba2}) and~(\ref{co3}) are satisfied.

We will additionally assume now that $n_1>1$ and
\beq\label{nn1}\rho_{t+1}=n_t^{N+1}\rho'_{t+1}\eeq for $t\ge
1$~\footnote{To proceed with the construction in which $\Phi$ satisfies
the conditions (\ref{ba}),~(\ref{ba2}) and~(\ref{co3}), it is
sufficient to take $\rho_{t+1}=n_t^{N+1}$ for all $t\geq1$. We
introduced the factor $\rho'_{t+1}$ because we need in Section 3.5
the condition that $n_t$ is divisible by a given number for $t$ large enough,
see~(\ref{podzielnosc}) and~(\ref{podzielnosc1}) below. Setting
$\rho'_{t+1}=t$ for all $t\geq1$ guarantees this condition.}.

For each $t\geq1$ choose a sequence
$(c_{1,i,t},c_{2,i,t},\ldots,c_{N,i,t})_{i=0}^ {n_t-1}\subset K$ such
that
$$
\frac{1}{n_t}\left|\{0\le i\le n_t-1:
(c_{1,i,t},\ldots,c_{N,i,t})=(c_1,\ldots,c_N)\}\right|\to\frac{1}{|
K|}
$$
for every $(c_1,\ldots,c_N)\in K$.\vsp

%\underline{Some additional notation.} Given
%$\underline{c}=(c_1,...,c_N)\in K$ let
%$$I_{\underline{c},t}=\{0\le i\le n_t-1: (c_{1,i,t},...,c_{N,i,t})=\underline{c}\}$$ and
%$$f_{\underline{c},t}=\sharp I_{\underline{c},t}.$$

%Moreover, for $c_l\in\ZZ_{k_l}$ we denote
%$$
%J_{c_l,t}=\{0\le i\le n_t-1: c_{l,i,t}=c_l\}.
%$$ Observe that $\sum_{\underline{c}\in
%K}f_{\underline{c},t}=\sum_{c_l\in\ZZ_{k_l}}\sharp J_{c_l,t}=n_t$.

%For $i<j$ we set $[i,j]=\{i,i+1,\ldots,j\}$.
Given a real number $x$ we denote by $[x]$ the greatest integer
less than or equal to $x$.

We define the input data for determining
$\va_l:X\setminus\{-\ov{1}\}\rightarrow \ZZ_{k_l}$ to be the
sequences $ b^{(t)}_{0,l},\ldots,b^{(t)}_{\rho_t-2,l} $ for $t\in
\NN$ as follows.

For $l>1$: {\small $$ b^{(t+1)}_{j,l}=\left\{\begin{array}{ll}
c_{l,s,t}& \mbox{\rm if}\;
j=sn_t^{N}\rho'_{t+1}+rn_t^{l-1}\;\mbox{\rm for some}\;0\le s\le n_t-1, 0\le r\le n^{N+1-l}_t\rho'_{t+1}-1, \\
-n_tc_{l,[\frac{s-1}{n_t^{N-l}\rho'_{t+1}}],t}& \mbox{\rm if}\;
j=sn_t^{l}-1\;\mbox{\rm for some}\;
1\le s\le n_t^{N+1-l}\rho'_{t+1}-1,\\
-n_tc_{l,n_t-1,t}& \mbox{\rm if}\;j=n_t^{N+1}\rho'_{t+1}-2,\\
 0&\mbox{\rm otherwise}\end{array}\right.
$$}
and for $l=1$:
$$
b^{(t+1)}_{j,1}=\left\{\begin{array}{ll}
c_{1,\left[\frac{j}{n_t^N\rho'_{t+1}}\right],t}& \mbox{\rm if}\;
n_t\;\mbox{{\rm does not divide}}\; j+1\;\mbox{{\rm and}}\;j\ne n_t^{N+1}\rho'_{t+1}-2, \\
-(n_t-1)c_{1,\left[\frac{j}{n_t^N\rho'_{t+1}}\right],t}& \mbox{\rm
if}\;
n_t|j+1,\\
-(n_t-2)c_{1,n_t-1,t}&  \mbox{\rm if}\;j=n_t^{N+1}\rho'_{t+1}-2.
\end{array}\right.
$$

We give an example of the sequences defined this way in the table
below. In this table $N=3$, $n_t=3$, $\rho'_{t+1}=1$. We omit the
indices $t$ and $t+1$ and we mark only the nonzero terms. To save
space we write $c^j_l$ instead of $c_{l,j,t}$.

\def\hspl{\hspace{-5pt}}
\def\hsp{\hspace{-9pt}}
\bigskip

\noindent {\tiny
\begin{tabular}{|c|c|c|c|c|c|c|c|c|c|c|c|c|c|c|c|c|c|c|c|c|c|c|c|c|c|c|c|}
\hline
 $j$ \hsp &\hspl   0\hsp &\hspl  1\hsp &\hspl  2\hsp &\hspl  3\hsp &\hspl  4\hsp &\hspl  5\hsp &\hspl  6\hsp &\hspl  7\hsp &\hspl  8\hsp &\hspl  9\hsp &\hspl  10\hsp &\hspl  11\hsp &\hspl  12\hsp &\hspl  13\hsp &\hspl  14\hsp &\hspl  15\hsp &\hspl  16\hsp &\hspl  17\hsp &\hspl  18\hsp &\hspl  19\hsp &\hspl  20\hsp &\hspl  21\hsp &\hspl  22\hsp &\hspl  23\hsp &\hspl  24\hsp &\hspl  25\hsp &\hspl  26\\
\hline
$b_{j,3}$\hsp &\hspl   $c^0_3$ \hsp &\hspl  \hsp &\hspl  \hsp &\hspl  \hsp &\hspl  \hsp &\hspl  \hsp &\hspl  \hsp &\hspl  \hsp &\hspl  \hsp &\hspl  $c^0_3$\hsp &\hspl  \hsp &\hspl  \hsp &\hspl  \hsp &\hspl  \hsp &\hspl  \hsp &\hspl  \hsp &\hspl  \hsp &\hspl  \hsp &\hspl  $c^0_3$\hsp &\hspl  \hsp &\hspl  \hsp &\hspl  \hsp &\hspl  \hsp &\hspl  \hsp &\hspl  \hsp &\hspl  \hsp &\hspl  $-3c^0_3$\\
\hline $b_{j,2}$\hsp &\hspl   $c^0_2$ \hsp &\hspl  \hsp &\hspl
\hsp &\hspl  $c^0_2$\hsp &\hspl  \hsp &\hspl  \hsp &\hspl
$c^0_2$\hsp &\hspl  \hsp &\hspl  $-3c^0_2$\hsp &\hspl  $c^0_2$\hsp
&\hspl  \hsp &\hspl  \hsp &\hspl  $c^0_2$\hsp &\hspl  \hsp &\hspl
\hsp &\hspl  $c^0_2$\hsp &\hspl  \hsp &\hspl  $-3c^0_2$\hsp &\hspl
$c^0_2$\hsp &\hspl  \hsp &\hspl  \hsp &\hspl  $c^0_2$\hsp &\hspl  \hsp &\hspl  \hsp &\hspl  $c^0_2$\hsp &\hspl  \hsp &\hspl  $-3c^0_2$\\
\hline $b_{j,1}$\hsp &\hspl  $c^0_1$\hsp &\hspl  $c^0_1$\hsp
&\hspl $-2c^0_1$\hsp &\hspl  $c^0_1$\hsp &\hspl  $c^0_1$\hsp
&\hspl $-2c^0_1$\hsp &\hspl  $c^0_1$\hsp &\hspl  $c^0_1$\hsp
&\hspl $-2c^0_1$ \hsp &\hspl  $c^0_1$\hsp &\hspl  $c^0_1$\hsp
&\hspl  $-2c^0_1$\hsp &\hspl  $c^0_1$\hsp &\hspl  $c^0_1$\hsp
&\hspl  $-2c^0_1$\hsp &\hspl  $c^0_1$\hsp &\hspl  $c^0_1$\hsp
&\hspl  $-2c^0_1$
\hsp &\hspl  $c^0_1$\hsp &\hspl  $c^0_1$\hsp &\hspl  $-2c^0_1$\hsp &\hspl  $c^0_1$\hsp &\hspl  $c^0_1$\hsp &\hspl  $-2c^0_1$\hsp &\hspl  $c^0_1$\hsp &\hspl  $c^0_1$\hsp &\hspl  $-2c^0_1$\\
\hline
\end{tabular}}

\bigskip

\noindent {\tiny
\begin{tabular}{|c|c|c|c|c|c|c|c|c|c|c|c|c|c|c|c|c|c|c|c|c|c|c|c|c|c|c|c|}
\hline
 $j$ \hsp &\hspl   27\hsp &\hspl  28\hsp &\hspl  29\hsp &\hspl  30\hsp &\hspl  31\hsp &\hspl  32\hsp &\hspl  33\hsp &\hspl  34\hsp &\hspl  35\hsp &\hspl  36\hsp &\hspl  37\hsp &\hspl  38\hsp &\hspl  39\hsp &\hspl  40\hsp &\hspl  41\hsp &\hspl  42\hsp &\hspl  43\hsp &\hspl  44\hsp &\hspl  45\hsp &\hspl  46\hsp &\hspl  47\hsp &\hspl  48\hsp &\hspl  49\hsp &\hspl  50\hsp &\hspl  51\hsp &\hspl  52\hsp &\hspl  53\\
\hline
$b_{j,3}$\hsp &\hspl   $c^1_3$ \hsp &\hspl  \hsp &\hspl  \hsp &\hspl  \hsp &\hspl  \hsp &\hspl  \hsp &\hspl  \hsp &\hspl  \hsp &\hspl  \hsp &\hspl  $c^1_3$\hsp &\hspl  \hsp &\hspl  \hsp &\hspl  \hsp &\hspl  \hsp &\hspl  \hsp &\hspl  \hsp &\hspl  \hsp &\hspl  \hsp &\hspl  $c^1_3$\hsp &\hspl  \hsp &\hspl  \hsp &\hspl  \hsp &\hspl  \hsp &\hspl  \hsp &\hspl  \hsp &\hspl  \hsp &\hspl  $-3c^1_3$\\
\hline $b_{j,2}$\hsp &\hspl   $c^1_2$ \hsp &\hspl  \hsp &\hspl
\hsp &\hspl  $c^1_2$\hsp &\hspl  \hsp &\hspl  \hsp &\hspl
$c^1_2$\hsp &\hspl  \hsp &\hspl  $-3c^1_2$\hsp &\hspl  $c^1_2$\hsp
&\hspl  \hsp &\hspl  \hsp &\hspl  $c^1_2$\hsp &\hspl  \hsp &\hspl
\hsp &\hspl  $c^1_2$\hsp &\hspl  \hsp &\hspl  $-3c^1_2$\hsp &\hspl
$c^1_2$\hsp &\hspl  \hsp &\hspl  \hsp &\hspl  $c^1_2$\hsp &\hspl  \hsp &\hspl  \hsp &\hspl  $c^1_2$\hsp &\hspl  \hsp &\hspl  $-3c^1_2$\\
\hline $b_{j,1}$\hsp &\hspl  $c^1_1$\hsp &\hspl  $c^1_1$\hsp
&\hspl $-2c^1_1$\hsp &\hspl  $c^1_1$\hsp &\hspl  $c^1_1$\hsp
&\hspl $-2c^1_1$\hsp &\hspl  $c^1_1$\hsp &\hspl  $c^1_1$\hsp
&\hspl $-2c^1_1$ \hsp &\hspl  $c^1_1$\hsp &\hspl  $c^1_1$\hsp
&\hspl  $-2c^1_1$\hsp &\hspl  $c^1_1$\hsp &\hspl  $c^1_1$\hsp
&\hspl  $-2c^1_1$\hsp &\hspl  $c^1_1$\hsp &\hspl  $c^1_1$\hsp
&\hspl  $-2c^1_1$
\hsp &\hspl  $c^1_1$\hsp &\hspl  $c^1_1$\hsp &\hspl  $-2c^1_1$\hsp &\hspl  $c^1_1$\hsp &\hspl  $c^1_1$\hsp &\hspl  $-2c^1_1$\hsp &\hspl  $c^1_1$\hsp &\hspl  $c^1_1$\hsp &\hspl  $-2c^1_1$\\
\hline
\end{tabular}}

\bigskip

\noindent {\tiny
\begin{tabular}{|c|c|c|c|c|c|c|c|c|c|c|c|c|c|c|c|c|c|c|c|c|c|c|c|c|c|c|}
\hline
 $j$ \hsp &\hspl   54\hsp &\hspl  55\hsp &\hspl  56\hsp &\hspl  57\hsp &\hspl  58\hsp &\hspl  59\hsp &\hspl  60\hsp &\hspl  61\hsp &\hspl  62\hsp &\hspl  63\hsp &\hspl  64\hsp &\hspl  65\hsp &\hspl  66\hsp &\hspl  67\hsp &\hspl  68\hsp &\hspl  69\hsp &\hspl  70\hsp &\hspl  71\hsp &\hspl  72\hsp &\hspl  73\hsp &\hspl  74\hsp &\hspl  75\hsp &\hspl  76\hsp &\hspl  77\hsp &\hspl  78\hsp &\hspl  79\\
\hline
$b_{j,3}$\hsp &\hspl   $c^2_3$ \hsp &\hspl  \hsp &\hspl  \hsp &\hspl  \hsp &\hspl  \hsp &\hspl  \hsp &\hspl  \hsp &\hspl  \hsp &\hspl  \hsp &\hspl  $c^2_3$\hsp &\hspl  \hsp &\hspl  \hsp &\hspl  \hsp &\hspl  \hsp &\hspl  \hsp &\hspl  \hsp &\hspl  \hsp &\hspl  \hsp &\hspl  $c^2_3$\hsp &\hspl  \hsp &\hspl  \hsp &\hspl  \hsp &\hspl  \hsp &\hspl  \hsp &\hspl  \hsp &\hspl    $-3c^2_3$\\
\hline $b_{j,2}$\hsp &\hspl   $c^2_2$ \hsp &\hspl  \hsp &\hspl
\hsp &\hspl  $c^2_2$\hsp &\hspl  \hsp &\hspl  \hsp &\hspl
$c^2_2$\hsp &\hspl  \hsp &\hspl  $-3c^2_2$\hsp &\hspl  $c^2_2$\hsp
&\hspl  \hsp &\hspl  \hsp &\hspl  $c^2_2$\hsp &\hspl  \hsp &\hspl
\hsp &\hspl  $c^2_2$\hsp &\hspl  \hsp &\hspl  $-3c^2_2$\hsp &\hspl
$c^2_2$\hsp &\hspl  \hsp &\hspl  \hsp &\hspl  $c^2_2$\hsp &\hspl  \hsp &\hspl  \hsp &\hspl  $c^2_2$\hsp &\hspl   $-3c^2_2$\\
\hline $b_{j,1}$\hsp &\hspl  $c^2_1$\hsp &\hspl  $c^2_1$\hsp
&\hspl $-2c^2_1$\hsp &\hspl  $c^2_1$\hsp &\hspl  $c^2_1$\hsp
&\hspl $-2c^2_1$\hsp &\hspl  $c^2_1$\hsp &\hspl  $c^2_1$\hsp
&\hspl $-2c^2_1$ \hsp &\hspl  $c^2_1$\hsp &\hspl  $c^2_1$\hsp
&\hspl  $-2c^2_1$\hsp &\hspl  $c^2_1$\hsp &\hspl  $c^2_1$\hsp
&\hspl  $-2c^2_1$\hsp &\hspl  $c^2_1$\hsp &\hspl  $c^2_1$\hsp
&\hspl  $-2c^2_1$
\hsp &\hspl  $c^2_1$\hsp &\hspl  $c^2_1$\hsp &\hspl  $-2c^2_1$\hsp &\hspl  $c^2_1$\hsp &\hspl  $c^2_1$\hsp &\hspl  $-2c^2_1$\hsp &\hspl  $c^2_1$\hsp &\hspl    $-c^2_1$\\
\hline
\end{tabular}}
\bigskip

In order to obtain the sequences in case  $\rho'_{t+1}>1$ one
should repeat each of the three  ``patterns'' above $\rho'_{t+1}$ times.
By a pattern we mean a part of the table containing $c^i_l$ with
fixed $i$. Of course, the last pattern has to be treated in a
slightly modified way because of the lack of the column
$\rho_{t+1}-1$.

For each $l=1,\ldots,N$, let $\va_l:X\setminus\{-\ov{1}\}\rightarrow\ZZ_{k_l}$ be defined
according to the above rule by the input data $(b^{(t)}_{j,l})_{j=0}^{\rho_{t}-2}$, $t\geq1$.

It is clear that the sequences
$(b^{(t+1)}_{j,l})_{j=0,\ldots,\rho_{t+1}-2}$ satisfy the
assumption of the condition~(c) in Lemma~\ref{konstrukcja_phi}.
Thus, thanks to Lemma~\ref{konstrukcja_phi}~(b), we calculate
$\va_l^{(in_t^k)}(x)$ for $x\in X$ as follows. Assume that $x$
belongs to the $s$th column  $C^{t+1}_s$ and $s\le
\rho_{t+1}-in_t^{k-1}-1$. Then, by
Lemma~\ref{konstrukcja_phi}~(b),
$\va_l^{(in_t^k)}(x)=\sum_{j=s}^{s+in_t^{k-1}-1}b^{(t+1)}_{j,l}$,
that is, we  sum up the $in_t^{k-1}$ consecutive   elements in the
$l$th row of the table (i.e.\ in the row $b^{(t+1)}_{j,l}$,
starting from the column $s$).

We are mainly interested in the ``average'' behaviour of $
(\va_1^{(n_t)}(x),\va_2^{(n_t^2)}(x),\ldots,\va_N^{(n_t^N)}(x)) $. In
the following table we present the values of $
(\va_1^{(n_t)}(x),\va_2^{(n_t^2)}(x),\va_3^{(n_t^3)}(x)) $ in the
case considered above. The column marked by $j$ contains the
values for $x\in C^{t+1}_j$.

\bigskip

\noindent {\tiny
\begin{tabular}{|c|c|c|c|c|c|c|c|c|c|c|c|c|c|c|c|c|c|c|c|c|c|c|c|c|c|c|c|}
\hline
 $j$ \hsp &\hspl   0\hsp &\hspl  1\hsp &\hspl  2\hsp &\hspl  3\hsp &\hspl  4\hsp &\hspl  5\hsp &\hspl  6\hsp &\hspl  7\hsp &\hspl  8\hsp &\hspl  9\hsp &\hspl  10\hsp &\hspl  11\hsp &\hspl  12\hsp &\hspl  13\hsp &\hspl  14\hsp &\hspl  15\hsp &\hspl  16\hsp &\hspl  17\hsp &\hspl  18\hsp &\hspl  19\hsp &\hspl  20\hsp &\hspl  21\hsp &\hspl  22\hsp &\hspl  23\hsp &\hspl  24\hsp &\hspl  25\hsp &\hspl  26\\
\hline
$\va_3^{(n_t^3)}$\hsp &\hspl   $c^0_3$ \hsp &\hspl $c^0_3$ \hsp &\hspl $c^0_3$ \hsp &\hspl $c^0_3$ \hsp &\hspl $c^0_3$ \hsp &\hspl $c^0_3$  \hsp &\hspl $c^0_3$ \hsp &\hspl $c^0_3$ \hsp &\hspl $c^0_3$ \hsp &\hspl  $c^0_3$\hsp &\hspl $c^0_3$ \hsp &\hspl$c^0_3$  \hsp &\hspl $c^0_3$ \hsp &\hspl $c^0_3$ \hsp &\hspl$c^0_3$  \hsp &\hspl $c^0_3$  \hsp &\hspl $c^0_3$ \hsp &\hspl $c^0_3$ \hsp &\hspl  $\star$\hsp &\hspl $\star$ \hsp &\hspl $\star$ \hsp &\hspl $\star$  \hsp &\hspl$\star$  \hsp &\hspl $\star$ \hsp &\hspl $\star$  \hsp &\hspl $\star$ \hsp &\hspl  $\star$\\
\hline $\va_2^{(n_t^2)}$\hsp &\hspl   $c^0_2$ \hsp &\hspl
$c^0_2$\hsp &\hspl $c^0_2$ \hsp &\hspl $c^0_2$\hsp &\hspl
$c^0_2$\hsp &\hspl $c^0_2$\hsp &\hspl $\star$\hsp &\hspl$\star$
\hsp &\hspl $\star$\hsp &\hspl $c^0_2$\hsp &\hspl $c^0_2$\hsp
&\hspl $c^0_2$\hsp &\hspl $c^0_2$\hsp &\hspl$c^0_2$ \hsp
&\hspl$c^0_2$ \hsp &\hspl $\star$\hsp &\hspl $\star$\hsp &\hspl
$\star$\hsp &\hspl
$c^0_2$\hsp &\hspl$c^0_2$  \hsp &\hspl $c^0_2$ \hsp &\hspl  $c^0_2$\hsp &\hspl $c^0_2$ \hsp &\hspl $c^0_2$ \hsp &\hspl  $\star$\hsp &\hspl $\star$ \hsp &\hspl  $\star$\\
\hline $\va_1^{(n_t)}$\hsp &\hspl  $c^0_1$\hsp &\hspl $c^0_1$\hsp
&\hspl $\star$\hsp &\hspl  $c^0_1$\hsp &\hspl $c^0_1$\hsp &\hspl
$\star$\hsp &\hspl  $c^0_1$\hsp &\hspl $c^0_1$\hsp &\hspl $\star$
\hsp &\hspl  $c^0_1$\hsp &\hspl $c^0_1$\hsp &\hspl $\star$\hsp
&\hspl  $c^0_1$\hsp &\hspl $c^0_1$\hsp &\hspl $\star$\hsp &\hspl
$c^0_1$\hsp &\hspl $c^0_1$\hsp &\hspl $\star$
\hsp &\hspl  $c^0_1$\hsp &\hspl  $c^0_1$\hsp &\hspl $\star$\hsp &\hspl  $c^0_1$\hsp &\hspl  $c^0_1$\hsp &\hspl $\star$\hsp &\hspl  $c^0_1$\hsp &\hspl  $c^0_1$\hsp &\hspl  $\star$\\
\hline
\end{tabular}}
\bigskip

\noindent {\tiny
\begin{tabular}{|c|c|c|c|c|c|c|c|c|c|c|c|c|c|c|c|c|c|c|c|c|c|c|c|c|c|c|c|}
\hline
 $j$ \hsp &\hspl   27\hsp &\hspl  28\hsp &\hspl  29\hsp &\hspl  30\hsp &\hspl  31\hsp &\hspl  32\hsp &\hspl  33\hsp &\hspl  34\hsp &\hspl  35\hsp &\hspl  36\hsp &\hspl  37\hsp &\hspl  38\hsp &\hspl  39\hsp &\hspl  40\hsp &\hspl  41\hsp &\hspl  42\hsp &\hspl  43\hsp &\hspl  44\hsp &\hspl  45\hsp &\hspl  46\hsp &\hspl  47\hsp &\hspl  48\hsp &\hspl  49\hsp &\hspl  50\hsp &\hspl  51\hsp &\hspl  52\hsp &\hspl  53\\
\hline
$\va_3^{(n_t^3)}$\hsp &\hspl   $c^1_3$ \hsp &\hspl $c^1_3$ \hsp &\hspl $c^1_3$ \hsp &\hspl $c^1_3$ \hsp &\hspl $c^1_3$ \hsp &\hspl $c^1_3$  \hsp &\hspl $c^1_3$ \hsp &\hspl $c^1_3$ \hsp &\hspl $c^1_3$ \hsp &\hspl  $c^1_3$\hsp &\hspl $c^1_3$ \hsp &\hspl$c^1_3$  \hsp &\hspl $c^1_3$ \hsp &\hspl $c^1_3$ \hsp &\hspl$c^1_3$  \hsp &\hspl $c^1_3$  \hsp &\hspl $c^1_3$ \hsp &\hspl $c^1_3$ \hsp &\hspl  $\star$\hsp &\hspl $\star$ \hsp &\hspl $\star$ \hsp &\hspl $\star$  \hsp &\hspl$\star$  \hsp &\hspl $\star$ \hsp &\hspl $\star$  \hsp &\hspl $\star$ \hsp &\hspl  $\star$\\
\hline $\va_2^{(n_t^2)}$\hsp &\hspl   $c^1_2$ \hsp &\hspl
$c^1_2$\hsp &\hspl $c^1_2$ \hsp &\hspl $c^1_2$\hsp &\hspl
$c^1_2$\hsp &\hspl $c^1_2$\hsp &\hspl $\star$\hsp &\hspl$\star$
\hsp &\hspl $\star$\hsp &\hspl $c^1_2$\hsp &\hspl $c^1_2$\hsp
&\hspl $c^1_2$\hsp &\hspl $c^1_2$\hsp &\hspl$c^1_2$ \hsp
&\hspl$c^1_2$ \hsp &\hspl $\star$\hsp &\hspl $\star$\hsp &\hspl
$\star$\hsp &\hspl
$c^1_2$\hsp &\hspl$c^1_2$  \hsp &\hspl $c^1_2$ \hsp &\hspl  $c^1_2$\hsp &\hspl $c^1_2$ \hsp &\hspl $c^1_2$ \hsp &\hspl  $\star$\hsp &\hspl $\star$ \hsp &\hspl  $\star$\\
\hline $\va_1^{(n_t)}$\hsp &\hspl  $c^1_1$\hsp &\hspl $c^1_1$\hsp
&\hspl $\star$\hsp &\hspl  $c^1_1$\hsp &\hspl $c^1_1$\hsp &\hspl
$\star$\hsp &\hspl  $c^1_1$\hsp &\hspl $c^1_1$\hsp &\hspl $\star$
\hsp &\hspl  $c^1_1$\hsp &\hspl $c^1_1$\hsp &\hspl $\star$\hsp
&\hspl  $c^1_1$\hsp &\hspl $c^1_1$\hsp &\hspl $\star$\hsp &\hspl
$c^1_1$\hsp &\hspl $c^1_1$\hsp &\hspl $\star$
\hsp &\hspl  $c^1_1$\hsp &\hspl  $c^1_1$\hsp &\hspl $\star$\hsp &\hspl  $c^1_1$\hsp &\hspl  $c^1_1$\hsp &\hspl $\star$\hsp &\hspl  $c^1_1$\hsp &\hspl  $c^1_1$\hsp &\hspl  $\star$\\
\hline
\end{tabular}}
\bigskip

\noindent {\tiny
\begin{tabular}{|c|c|c|c|c|c|c|c|c|c|c|c|c|c|c|c|c|c|c|c|c|c|c|c|c|c|c|}
\hline
 $j$ \hsp &\hspl   54\hsp &\hspl  55\hsp &\hspl  56\hsp &\hspl  57\hsp &\hspl  58\hsp &\hspl  59\hsp &\hspl  60\hsp &\hspl  61\hsp &\hspl  62\hsp &\hspl  63\hsp &\hspl  64\hsp &\hspl  65\hsp &\hspl  66\hsp &\hspl  67\hsp &\hspl  68\hsp &\hspl  69\hsp &\hspl  70\hsp &\hspl  71\hsp &\hspl  72\hsp &\hspl  73\hsp &\hspl  74\hsp &\hspl  75\hsp &\hspl  76\hsp &\hspl  77\hsp &\hspl  78\hsp &\hspl  79\\
\hline
$\va_3^{(n_t^3)}$\hsp &\hspl   $c^2_3$ \hsp &\hspl $c^2_3$ \hsp &\hspl $c^2_3$ \hsp &\hspl $c^2_3$ \hsp &\hspl $c^2_3$ \hsp &\hspl $c^2_3$  \hsp &\hspl $c^2_3$ \hsp &\hspl $c^2_3$ \hsp &\hspl $c^2_3$ \hsp &\hspl  $c^2_3$\hsp &\hspl $c^2_3$ \hsp &\hspl$c^2_3$  \hsp &\hspl $c^2_3$ \hsp &\hspl $c^2_3$ \hsp &\hspl$c^2_3$  \hsp &\hspl $c^2_3$  \hsp &\hspl $c^2_3$ \hsp &\hspl  $\star$\hsp &\hspl $\star$ \hsp &\hspl $\star$ \hsp &\hspl $\star$  \hsp &\hspl$\star$  \hsp &\hspl $\star$ \hsp &\hspl $\star$  \hsp &\hspl $\star$ \hsp &\hspl  $\star$\\
\hline $\va_2^{(n_t^2)}$\hsp &\hspl   $c^2_2$ \hsp &\hspl
$c^2_2$\hsp &\hspl $c^2_2$ \hsp &\hspl $c^2_2$\hsp &\hspl
$c^2_2$\hsp &\hspl $c^2_2$\hsp &\hspl $\star$\hsp &\hspl$\star$
\hsp &\hspl $\star$\hsp &\hspl $c^2_2$\hsp &\hspl $c^2_2$\hsp
&\hspl $c^2_2$\hsp &\hspl $c^2_2$\hsp &\hspl$c^2_2$ \hsp
&\hspl$c^2_2$ \hsp &\hspl $\star$\hsp &\hspl $\star$\hsp &\hspl
$\star$\hsp &\hspl
$c^2_2$\hsp &\hspl$c^2_2$  \hsp &\hspl $c^2_2$ \hsp &\hspl  $c^2_2$\hsp &\hspl $c^2_2$ \hsp &\hspl  $\star$\hsp &\hspl $\star$ \hsp &\hspl  $\star$\\
\hline $\va_1^{(n_t)}$\hsp &\hspl  $c^2_1$\hsp &\hspl $c^2_1$\hsp
&\hspl $\star$\hsp &\hspl  $c^2_1$\hsp &\hspl $c^2_1$\hsp &\hspl
$\star$\hsp &\hspl  $c^2_1$\hsp &\hspl $c^2_1$\hsp &\hspl $\star$
\hsp &\hspl  $c^2_1$\hsp &\hspl $c^2_1$\hsp &\hspl $\star$\hsp
&\hspl  $c^2_1$\hsp &\hspl $c^2_1$\hsp &\hspl $\star$\hsp &\hspl
$c^2_1$\hsp &\hspl $c^2_1$\hsp &\hspl $\star$
\hsp &\hspl  $c^2_1$\hsp &\hspl  $c^2_1$\hsp &\hspl $\star$\hsp &\hspl  $c^2_1$\hsp &\hspl  $c^2_1$\hsp &\hspl $\star$\hsp &\hspl  $c^2_1$\hsp &\hspl  $\star$\\
\hline
\end{tabular}}
\bigskip

We do not give explicitly the values in the places marked by
$\star$ because the frequency of $\star$'s in a row of the table
tends to~0 as $t\to \infty$\footnote{This is guaranteed by the
condition $n_t^N\rho'_{t+1}/\rho_{t+1}\rightarrow 0$, see
(\ref{nn1}).}.

Similarly, we analyze the values of $\va_k^{(n_t^l)}(x)$ for $k\neq
l$ and we note that in such a case the frequency of zeros tends to 1
as $t\rightarrow\infty$.

We summarize our construction in the following proposition.

\begin{Prop}\label{czestotliwosc} {\rm (a)} Given $l\in\{1,\ldots,N\}$ and $i\in \NN$,
$$
\mu\big(\{x\in X:\:\va_l^{(in^l_t)}(x)=i\va_l^{(n^l_t)}(x)\}\big)\to 1
\;\;\mbox{as}\;\; t\to \infty.$$

{\rm (b)} Given $\underline{c}\in K$,
$$
\mu\big(\{x\in X:\:
(\va_1^{(n_t)}(x),\va_2^{(n_t^2)}(x),\ldots,\va_N^{(n_t^N)}(x))=\underline{c}\}\big)\to
\frac{1}{|K|}\;\;\mbox{as}\;\; t\to \infty.$$

{\rm (c)} If $k\neq l$ then
$$
\mu\big(\{x\in X:\va_k^{(n_t^l)}(x)=0\}\big)\to
1\;\;\mbox{as}\;\; t\to \infty.$$\end{Prop}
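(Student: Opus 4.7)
My plan starts from a reduction already present in the setup. By direct inspection of the explicit input data, each positive contribution $c_{l,s,t}$ in the sequence $(b^{(t+1)}_{j,l})_j$ is balanced by matching negative terms $-n_tc_{l,s,t}$, so $b^{(t+1)}_{0,l}+\ldots+b^{(t+1)}_{\rho_{t+1}-2,l}=0$ for every $l$. Applying Lemma~\ref{konstrukcja_phi}(c) gives $\Sigma_{t,l}=0$, and then Lemma~\ref{konstrukcja_phi}(b) yields
\[
\va_l^{(in_t)}(x)\;=\;\sum_{r=u}^{u+i-1}b^{(t+1)}_{r,l}\qquad\text{for }x\in C^{t+1}_u,\ u+i-1\le\rho_{t+1}-2.
\]
Thus each of the three assertions becomes a combinatorial statement about windowed partial sums of length $n_t^{k-1}$ in the sequences $(b^{(t+1)}_{j,l})_j$, and the task is to estimate, for each row $l$, the density of columns $u$ on which these partial sums fail to take the desired ``clean'' value.

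For (b), I would first study the single row $l$: inside the $s$th super-block $[sn_t^N\rho'_{t+1},(s+1)n_t^N\rho'_{t+1})$ the positive values $c_{l,s,t}$ sit at arithmetic progression of step $n_t^{l-1}$, and the negative corrections $-n_tc_{l,s,t}$ sit at the endpoints $j=mn_t^l-1$. A window of length $n_t^{l-1}$ picks up exactly one $c_{l,s,t}$ and no negative term on the starting positions $u$ that avoid both the ``tail'' of each length-$n_t^l$ mini-block (density $1/n_t$) and the super-block boundary (density $1/(n_t^{N-l+1}\rho'_{t+1})$). Hence $\va_l^{(n_t^l)}(x)=c_{l,s,t}$ on a set of density $\to 1$ inside each super-block. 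Running this simultaneously in all $N$ coordinates, $(\va_1^{(n_t)}(x),\ldots,\va_N^{(n_t^N)}(x))=(c_{1,s,t},\ldots,c_{N,s,t})$ on a subset of $X$ of measure $\to 1$, and since $(c_{1,s,t},\ldots,c_{N,s,t})_{s=0}^{n_t-1}$ was chosen to equidistribute on $K$, the frequency of the event ``$=\underline{c}$'' tends to $1/|K|$.

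For (c), if $k>l$ the window length $n_t^{l-1}$ is strictly smaller than the spacing $n_t^{k-1}$ between consecutive nonzero entries of row $k$, so the window is empty (and hence the sum vanishes) on a set of columns of density $1-O(n_t^{l-k})$. If $k<l$, then $n_t^{l-1}=n_t^{l-1-k}\cdot n_t^k$ is an integer multiple of the row-$k$ mini-block length $n_t^k$, and the mini-block sum is $n_t\,c_{k,s,t}+(-n_t\,c_{k,s,t})=0$. Writing $u=\alpha n_t^k+\beta$, a misaligned window (those with $\beta\neq 0$) leaves a residue of the form $r_0(c_{k,s',t}-c_{k,s,t})$ supported on the boundary mini-blocks; this is zero whenever both boundary mini-blocks lie in the same super-block, which excludes only a set of density $1/(n_t^{N-l+1}\rho'_{t+1})\to 0$. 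Part (a) is then a corollary of (b) via the cocycle identity $\va_l^{(in_t^l)}(x)=\sum_{j=0}^{i-1}\va_l^{(n_t^l)}(T^{jn_t^l}x)$: since $T^{n_t^l}$ shifts by $n_t^{l-1}$ columns at level $t+1$, the required equality holds as soon as $u,u+n_t^{l-1},\ldots,u+(i-1)n_t^{l-1}$ are all ``good'' columns (of common value $c_{l,s,t}$) in the sense of (b), which happens on a set of density $1-O(i/n_t)$.

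The main obstacle, and where care is needed, is the uniform bookkeeping of the bad sets in step (b): one must simultaneously bound, in each of the $N$ rows, the density of columns where the window either crosses a super-block boundary or captures the length-$n_t^{l-1}$ ``tail'' containing the negative correction. The design choice $\rho_{t+1}=n_t^{N+1}\rho'_{t+1}$ from~(\ref{nn1}) is what makes every such bad density $O(1/n_t)$ or better, uniformly in $l$, so that all estimates combine at once in the limit $t\to\infty$.
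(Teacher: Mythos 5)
Your proposal is correct and follows essentially the same route as the paper: the paper's own ``proof'' consists precisely of invoking Lemma~\ref{konstrukcja_phi}(b) (with $\Sigma_t=0$ from part~(c)) to reduce each assertion to windowed partial sums of the input data $(b^{(t+1)}_{j,l})_j$, and then observing via the tables that the frequency of bad columns (the $\star$'s) tends to $0$ thanks to~(\ref{nn1}). Your write-up simply makes the density bookkeeping explicit, which the paper leaves to inspection.
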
 \epv

Thus $\Phi=(\va_1,\ldots,\va_N)$ constructed above has  the desired properties
(\ref{ba}),~(\ref{ba2}) and~(\ref{co3}).

%%%% DOT�D MAIN CONSTRUCTION W NOOWEJ WERSJI

\subsection{How many constructions can be done over the same
odometer?}\label{howmany} Assume that $N\geq2$. Suppose  that we
have a sequence of periods
$\underline{k}^{(i)}=(k_1^{(i)},\ldots,k^{(i)}_N)$, $i\geq1$. Let
$K_i\subset\Z_{k_1^{(i)}}\times\ldots\times\Z_{k_N^{(i)}}$ be
algebraic couplings and suppose that we want to realize
$G_i:=K_i^{\perp}$, $i\geq1$, as $(N,p)$-periodic rigidity groups
for a common $p\in E(\beta\N)$ (with the~``$\perp$'' depending on
$i\geq1$, see Section~\ref{prepa}). To this end assume that $T$ is
the odometer determined by the sequence $(n_t)_{t\geq1}$ satisfying
for each $i\geq1$ (cf.~(\ref{podzielnosc}))
\beq\label{podzielnosc1}
{\rm lcm}(k_1^{(i)},\ldots,k^{(i)}_N)|n_t\;\;\mbox{for all}\;\;t\geq
t_i.\eeq Moreover, we assume that~(\ref{nn1}) holds. Then we see
that the construction of the relevant cocycle (taking values in
$\Z_{k_1^{(i)}}\times\ldots\times\Z_{k_N^{(i)}}$) in
Section~\ref{construction} can be carried out over this fixed $T$
for each $i\geq1$. This yields a sequence of natural numbers $(n_t)_{t\geq1}$ and a
sequence of unitary operators $(U_i)_{i\geq1}$ acting on Hilbert
spaces ${\cal H}_i$, such that for each $i\geq1$
 \beq\label{co88}
\lim_{t\to\infty} U_i^{j_1n_t+\ldots+j_Nn_t^N}=
\left\{\begin{array}{lll}
Id&\mbox{if}&(j_1,\ldots,j_N)\in K_i^\perp\\
0&\mbox{otherwise.}&\end{array}\right.\eeq

Apply now Part~2 of the proof of Proposition~\ref{plimitpol} to
conclude that we can find a subsequence $(m_s)$ of $(n_t)$ such
that the convergence~(\ref{co88}) can be replaced by
${\rm IP}$-convergence along ${\rm FS}((m_s))$ for each $i\geq1$. Using Lemma~\ref{plimit}, we have
proved the following result.

\begin{Prop}\label{kl} Assume that $N\geq2$.
Assume moreover that $G_i\subset \Z_{k_1^{(i)}}\oplus
\ldots\oplus\Z_{k_N^{(i)}}$ is a subgroup satisfying the
$(\ast)$-property, $i\geq1$. Then there exists $p\in E(\beta\N)$
such that $G_i$ is an $(N,p)$-periodic rigidity group for each
$i\geq1$.
\end{Prop}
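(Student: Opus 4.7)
The plan is to combine the single-index construction already carried out in Section~\ref{construction} with the diagonal IP-extraction procedure from Part~2 of the proof of Proposition~\ref{plimitpol}. First, I would choose a sequence $(n_t)_{t\geq 1}$ with $n_t\mid n_{t+1}$ satisfying the two compatibility conditions stated just before the proposition: (\ref{podzielnosc1}), so that $\mathrm{lcm}(k_1^{(i)},\ldots,k_N^{(i)})\mid n_t$ eventually for each $i\geq 1$ (which is possible because we can enforce this divisibility at stage $t_i$ and beyond), and (\ref{nn1}), which governs the growth of the ratios $\rho_{t+1}$. Let $T$ be the $(n_t)$-odometer on $X=\prod_{t\geq 1}\Z_{\rho_t}$ with Haar measure $\mu$.

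Next, for every $i\geq 1$ separately, I would run the Morse-cocycle construction of Section~\ref{construction} over this single odometer $T$ to obtain a cocycle $\Phi_i=(\va^{(i)}_1,\ldots,\va^{(i)}_N):X\setminus\{-\ov 1\}\to\Z_{k_1^{(i)}}\times\cdots\times\Z_{k_N^{(i)}}$ satisfying properties~(a)--(c) of Proposition~\ref{czestotliwosc}. Setting $\Theta_i=\chi_1^{(i)}\otimes\cdots\otimes\chi_N^{(i)}$ and $U_i:=V^T_{\Theta_i\circ\Phi_i}$ acting on $\ch_i:=L^2(X,\mu)$, the analysis of Section~\ref{prepa} (in particular Lemma~\ref{uproszczenie}, Proposition~\ref{critWC}, and equations~(\ref{co4})--(\ref{co5})) gives the weak convergence
\[
\lim_{t\to\infty}U_i^{j_1 n_t+j_2 n_t^2+\cdots+j_N n_t^N}=\begin{cases}Id_{\ch_i}&\text{if }(j_1,\ldots,j_N)\in G_i,\\ 0_{\ch_i}&\text{otherwise,}\end{cases}
\]
for every $(j_1,\ldots,j_N)\in\Z_{k_1^{(i)}}\oplus\cdots\oplus\Z_{k_N^{(i)}}$ and every $i\geq 1$.

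The crucial step is the promotion of these ordinary weak limits along $(n_t)$ to simultaneous IP-limits for all $i$ at once. Enumerate the countable collection of monomials $\{Q_{i,\underline{j}}\}$ appearing in the exponents above, indexed over $i\geq 1$ and $\underline{j}\in\Z_{k_1^{(i)}}\oplus\cdots\oplus\Z_{k_N^{(i)}}$, together with their prescribed weak limits in the corresponding unit balls (taken with the weak operator topology, which is metrizable on the unit ball of $B(\ch_i)$ since each $\ch_i$ may be assumed separable). Applying Part~1 of the proof of Proposition~\ref{plimitpol} successively and then diagonalizing exactly as in Part~2 of that proof, I would extract a subsequence $(m_s)_{s\geq 1}$ of $(n_t)$ such that for every pair $(i,\underline{j})$ the IP-convergence
\[
{\rm IP}\text{-}\!\lim U_i^{Q_{i,\underline{j}}((m_s)_\alpha)}=\begin{cases}Id_{\ch_i}&\text{if }\underline{j}\in G_i,\\ 0_{\ch_i}&\text{otherwise,}\end{cases}
\]
holds along the single IP-set ${\rm FS}((m_s)_{s\geq 1})$. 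Finally, Lemma~\ref{plimit}(i) supplies an idempotent $p\in E(\beta\N)$ for which the corresponding $p$-limits hold for every $i\geq 1$ simultaneously; by the characterization~(\ref{cla2}) of $\widetilde{G}(p,N,U_i)$, this exactly says that $G_i$ is an $(N,p)$-periodic rigidity group for every $i$.

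The only place where I expect to need care is the diagonal extraction: one must make sure the indexing does not depend on a fixed $i$, and that the condition (\ref{plimitpol2}) (i.e.\ $\mathrm{lcm}(k_1^{(i)},\ldots,k_N^{(i)})\mid m_s$ eventually, for each $i$) is inherited by the extracted subsequence. Both are automatic here because the divisibility is already built into $(n_t)$ via~(\ref{podzielnosc1}) and is preserved by passing to any tail subsequence; hence the hypotheses of Proposition~\ref{plimitpol} (applied to the compact semitopological semigroup obtained as the unit ball of $B(\bigoplus_i\ch_i)$, say) hold for every polynomial we need to control, and no additional difficulty arises.
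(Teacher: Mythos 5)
Your proposal is correct and follows essentially the same route as the paper: fix one odometer whose sequence $(n_t)$ satisfies~(\ref{nn1}) and~(\ref{podzielnosc1}), run the Section~\ref{construction} construction separately for each $K_i=G_i^\perp$ over that odometer to get~(\ref{co88}), then diagonalize via Part~2 of the proof of Proposition~\ref{plimitpol} to obtain a single ${\rm IP}$-set along which all the convergences hold, and finish with Lemma~\ref{plimit}(i). The only caveat is your parenthetical suggestion to apply Proposition~\ref{plimitpol} to the unit ball of $B(\bigoplus_i\ch_i)$: hypothesis~(\ref{plimitpol1}) for the direct-sum operator would require a single $r_j$ divisible by every $k_j^{(i)}$, which need not exist, so one should (as your main text and the paper both do) apply Part~1 to each $U_i$ separately and only then diagonalize.
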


\subsection{$N$-rigidity groups are subgroups of ${\cal P}_N$
of finite index (proof of Theorem~E)}\label{finiteindexsec}

Recall from Section 3.1 that $N$-rigidity groups are preimages of
$N$-periodic rigidity groups. Now, we  give a purely algebraic
description of rigidity groups as subgroups of $\Z^N$, see also
Remark~\ref{rozj11}.

%Due to Theorem~\ref{p-group-main} and the definition of
%$N$-rigidity groups (as preimages of $N$-periodic rigidity
%groups), we will be able now to give a purely algebraic
%description of rigidity groups as subgroups of $\Z^N$, see also
%Remark~\ref{rozj11}.

For $i=1,\ldots,N$, we denote by $e_i$ the $i$th standard basis
vector $(0,\ldots,0,1,0,\ldots,0)$ in $\Z^N$. Given a sequence
$\underline{k}=(k_1,\ldots,k_N)$ of natural numbers, we denote by
$$
\pi_{\underline{k}}:\Z^N\rightarrow\Z_{k_1}\oplus\ldots\oplus\Z_{k_N}
$$
the canonical projection $\pi_{k_1}\times\ldots\times\pi_{k_N}$.
In view of Theorem~\ref{p-group-main} and~(\ref{ppp2}), we obtain the following result.

\begin{Prop}\label{sta1}
A subgroup $G$ of $\Z^N$ is an $N$-rigidity group if and only  if
there exists a sequence $\underline{k}=(k_1,\ldots,k_N)$ of natural
numbers such that:

{\rm (a)}  $G$ has the $(\ast)$-property, that is, for any
$i=1,\ldots,N$ and any element $g=(g_1,\ldots,g_N)\in G$: if
$k_j|g_j$ for each $j\neq i$, then $k_i|g_i$,

{\rm (b)}
$G=\pi_{\underline{k}}^{-1}(\pi_{\underline{k}}(G))$.\end{Prop}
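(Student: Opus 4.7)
The plan is to reduce the proposition to the combination of (\ref{ppp2}) and Theorem~\ref{p-group-main}. The bridge is a straightforward observation: a subgroup $G\subset\Z^N$ satisfies condition~(a) with respect to $\underline{k}=(k_1,\ldots,k_N)$ if and only if $\widetilde{G}:=\pi_{\underline{k}}(G)\subset\Z_{k_1}\oplus\ldots\oplus\Z_{k_N}$ satisfies the $(\ast)$-property, since divisibility of $g_j$ by $k_j$ is exactly the vanishing of the $j$-th coordinate of $\pi_{\underline{k}}(g)$.

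For the forward direction, assume $G=G(p,N,U,{\cal J})$ is an $N$-rigidity group. From (\ref{cla1}) there exist unique integers $k_1,\ldots,k_N\ge1$ playing the role of periods; set $\underline{k}=(k_1,\ldots,k_N)$. The associated $N$-periodic rigidity group $\widetilde{G}(p,N,U,{\cal J})$ has the $(\ast)$-property, and by~(\ref{ppp2}) we have $G=\pi_{\underline{k}}^{-1}(\widetilde{G}(p,N,U,{\cal J}))$, which is (b); condition (a) then follows from the bridging observation.

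For the converse, given $G\subset\Z^N$ and $\underline{k}$ satisfying (a) and (b), set $\widetilde{G}:=\pi_{\underline{k}}(G)$. By the bridging observation $\widetilde{G}$ has the $(\ast)$-property, so Theorem~\ref{p-group-main} yields $p\in E(\beta\N)$ and $U\in{\cal U}({\cal H})$ with $\widetilde{G}=\widetilde{G}(p,N,U)$. Combining~(\ref{ppp2}) with (b) gives $G(p,N,U)=\pi_{\underline{k}}^{-1}(\widetilde{G})=G$. The remaining requirement $\max\{\deg P:P\in G\}=N$ is automatic, because $k_N e_N\in G$ (its $\underline{k}$-image is zero, hence lies in $\widetilde{G}$) corresponds to the polynomial $k_N x^N$ of degree~$N$.

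No genuine obstacle is expected: the proposition is essentially a repackaging of Theorem~\ref{p-group-main} lifted from $\Z_{k_1}\oplus\ldots\oplus\Z_{k_N}$ back to $\Z^N$, and all the substantive content---existence of the periods $k_r$ via Theorem~A and Corollary~B, the algebraic characterization of the $(\ast)$-property via annihilators of couplings (Proposition~\ref{sk3}), and the odometer-cocycle realization of Section~\ref{construction}---is already in place. The only step that needs any care is the bridging observation tying the divisibility formulation in (a) to the $(\ast)$-property on the finite quotient.
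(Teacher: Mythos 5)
Your proposal is correct and follows essentially the same route as the paper, which derives Proposition~\ref{sta1} directly from Theorem~\ref{p-group-main} together with~(\ref{ppp2}); your ``bridging observation'' (that condition~(a) for $G$ is exactly the $(\ast)$-property of $\pi_{\underline{k}}(G)$, since $k_j\mid g_j$ means the $j$-th coordinate of $\pi_{\underline{k}}(g)$ vanishes) is the one small verification the paper leaves implicit, and your remark that $k_Ne_N\in\ker\pi_{\underline{k}}\subset G$ forces $\max\{\deg P:P\in G\}=N$ correctly handles the degree requirement in the definition of an $N$-rigidity group.
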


It follows that the problem of full description of rigidity groups
contained in ${\cal P}_{\leq N}$ is reduced to the description of
subgroups of $\Z^N$ satisfying~(a) and~(b) in Proposition~\ref{sta1}.
\medskip

{\em Proof of Theorem~E.} If $G$ is an $N$-rigidity group
then, by~(b), there is a group isomorphism
$$
\Z^N/G\cong
(\Z_{k_1}\oplus\ldots\oplus\Z_{k_N})/\pi_{\underline{k}}(G)
$$
which implies that the index of $G$ in $\Z^N$ is finite.

To prove the converse, assume that $G$ has finite index in $\Z^N$
and, for $i=1,\ldots,N$, let $k_i$ be the smallest natural number such that $k_ie_i\in
G$. We show that the conditions~(a) and~(b) are satisfied with
$\underline{k}=(k_1,\ldots,k_N)$.

(a) Assume that $g=(g_1,\ldots,g_N)\in G$ and let $k_j|g_j$ for each
$j\neq 1$. It follows that
$$
(0,g_2,\ldots,g_N)\in k_1\Z\oplus\ldots\oplus k_N\Z={\rm
ker}(\pi_{\underline{k}}).
$$ Moreover, $k_1\Z\oplus\ldots\oplus k_N\Z \subset G$ by the definition of
$k_1,\ldots,k_N$. Then
$$G\ni g-(0,g_2,\ldots,g_N)=(g_1,0,\ldots,0)=g_1e_1.$$ It follows that
$k_1$ divides $g_1$ by the choice of $k_1$. The same argument
works when the index~1 is replaced by any other $i\in\{1,\ldots, N\}$.

(b) Let $x\in \pi_{\underline{k}}^{-1}(\pi_{\underline{k}}(G))$.
Then $x-g\in{\rm ker}(\pi_{\underline{k}})$ for some $g\in G$. But
${\rm ker}(\pi_{\underline{k}})\subset G$, thus  $x\in G$.

We have completed the proof of Theorem~E.\koniec

\begin{Cor}\label{sta2} If $G$ is a rigidity subgroup of $\Z^N$
and $\sigma:\Z^N\rightarrow\Z^N$ is an injective $\Z$-endomorphism then $\sigma(G)$ is also a rigidity subgroup of $\Z^N$.
In particular, this holds if $\sigma$ is a $\Z$-automorphism of
$\Z^N$.\end{Cor}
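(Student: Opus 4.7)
The plan is to reduce the statement to the algebraic characterization of rigidity groups given by Theorem~E. Via the identification ${\cal P}_{\leq N} \cong \Z^N$ with respect to the basis $\{x,x^2,\ldots,x^N\}$, an $N$-rigidity subgroup of $\Z^N$ with maximal degree $N$ is exactly a finite-index subgroup of $\Z^N$. Hence it suffices to prove: if $G \subset \Z^N$ has finite index and $\sigma:\Z^N\to\Z^N$ is an injective $\Z$-endomorphism, then $\sigma(G)$ also has finite index in $\Z^N$.

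First, since $\sigma$ is an injective $\Z$-linear map between free abelian groups of the same finite rank $N$, its image $\sigma(\Z^N)$ again has rank~$N$. Any subgroup of $\Z^N$ of full rank automatically has finite index, because $\Z^N/\sigma(\Z^N)$ is a finitely generated torsion group, hence finite. Next, $\sigma$ restricts to a group isomorphism $\Z^N \to \sigma(\Z^N)$, and therefore preserves the lattice of subgroups together with their indices; in particular
\[
[\sigma(\Z^N) : \sigma(G)] = [\Z^N : G] < \infty.
\]
Multiplicativity of indices then gives
\[
[\Z^N : \sigma(G)] = [\Z^N : \sigma(\Z^N)] \cdot [\sigma(\Z^N) : \sigma(G)] < \infty.
\]

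It remains to verify the condition $\max\{\deg P : P \in \sigma(G)\} = N$ required by the hypothesis of Theorem~E. Setting $m := [\Z^N : \sigma(G)]$, one has $m\Z^N \subset \sigma(G)$, so in particular $m e_N = (0,\ldots,0,m) \in \sigma(G)$, which corresponds to the polynomial $m x^N$ of degree exactly $N$. Applying Theorem~E to $\sigma(G)$ concludes the argument. The ``in particular'' clause follows at once, since every $\Z$-automorphism of $\Z^N$ is, \emph{a fortiori}, an injective $\Z$-endomorphism. No substantial obstacle is anticipated here; once Theorem~E is at our disposal, the proof rests entirely on elementary facts about indices of subgroups of $\Z^N$.
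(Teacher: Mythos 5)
Your proof is correct and follows the same route the paper intends: the corollary is stated as an immediate consequence of Theorem~E, via the observation that an injective $\Z$-endomorphism of $\Z^N$ preserves the property of having finite index (and your check that $\sigma(G)$ still contains an element of degree $N$ is a legitimate and welcome bit of extra care). Nothing to add.
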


\begin{Cor}\label{sta3}An intersection of finitely many
rigidity subgroups of $\Z^N$ is a rigidity subgroup. A subgroup
$H$ of a rigidity group $G$ is a rigidity group if and only if $H$
is of finite index in $G$.
\end{Cor}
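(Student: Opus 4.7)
The plan is to reduce everything to Theorem~E, which states that a subgroup of $\cp_{\leq N}\cong\Z^N$ (satisfying the maximum-degree condition) is a rigidity group if and only if it has finite index in $\cp_{\leq N}$. So the corollary becomes a purely algebraic statement about finite-index subgroups of $\Z^N$, and I would not expect any serious obstacle.

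First I would make the preliminary observation that the maximum-degree condition is automatic for any finite-index subgroup $G$ of $\Z^N$: if $[\Z^N:G]=m<\infty$, then $m\Z^N\subset G$ and in particular $mx^N\in G$, so $\max\{\deg P:P\in G\}=N$. Thus, combining with Theorem~E, being a rigidity subgroup of $\Z^N$ is equivalent to having finite index in $\Z^N$, with no side condition to check.

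For the first assertion, suppose $G_1,\ldots,G_r$ are rigidity subgroups of $\Z^N$. By Theorem~E each $G_i$ has finite index, and a classical argument (e.g.\ the natural injection $\Z^N/(G_1\cap\ldots\cap G_r)\hookrightarrow\prod_{i=1}^r\Z^N/G_i$) gives
$$[\Z^N:G_1\cap\ldots\cap G_r]\leq\prod_{i=1}^r[\Z^N:G_i]<\infty.$$
Hence $G_1\cap\ldots\cap G_r$ has finite index and, by Theorem~E again, is a rigidity subgroup.

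For the second assertion, let $H\subset G\subset\Z^N$ with $G$ a rigidity subgroup. By the tower formula
$$[\Z^N:H]=[\Z^N:G]\cdot[G:H],$$
and the first factor is finite by Theorem~E. Therefore $[\Z^N:H]<\infty$ if and only if $[G:H]<\infty$. Invoking Theorem~E once more, $H$ is a rigidity subgroup if and only if $[\Z^N:H]<\infty$, i.e.\ if and only if $H$ has finite index in $G$. This completes the plan.
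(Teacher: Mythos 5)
Your proposal is correct and is exactly the argument the paper intends: Corollary~\ref{sta3} is stated as an immediate consequence of Theorem~E, and your reduction (finite intersections of finite-index subgroups have finite index; the tower formula $[\Z^N:H]=[\Z^N:G]\cdot[G:H]$) is the standard way to fill it in. Your preliminary remark that the maximum-degree hypothesis of Theorem~E is automatic for finite-index subgroups (since $m\Z^N\subset G$ forces $mx^N\in G$) is a worthwhile detail that the paper leaves implicit.
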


We now pass to the proof of Corollary~F. We start with a ``separation''
lemma.

\begin{Lemma}\label{sta4} Assume that
$H$ is a subgroup of $\Z^N$ and $Q_1,\ldots,Q_t\in\Z^N\setminus H$.
Then there exists a rigidity subgroup  $G\subset \Z^N$ such that
$H\subset G$ and $Q_1,\ldots,Q_t\notin G$.\end{Lemma}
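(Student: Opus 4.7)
The plan is to reduce the problem to separating finitely many nonzero elements from the identity in the quotient group $A:=\Z^N/H$ by a finite-index subgroup, and then to invoke Theorem~E. Let $\pi:\Z^N\to A$ denote the natural projection. Subgroups $G\subset\Z^N$ containing $H$ correspond bijectively to subgroups $\bar{G}\subset A$ via $G=\pi^{-1}(\bar{G})$ and $\bar{G}=\pi(G)$. Under this correspondence, the condition $Q_i\notin G$ translates to $\pi(Q_i)\notin\bar{G}$, and because each $Q_i\notin H$ the images $\bar{Q}_i:=\pi(Q_i)$ are nonzero in $A$. Moreover $[\,\Z^N:G\,]=[A:\bar{G}]$, so $G$ has finite index in $\Z^N$ exactly when $\bar{G}$ has finite index in $A$. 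By Theorem~E (in the equivalent form given by Proposition~\ref{sta1}), any subgroup of $\Z^N$ of finite index is automatically a rigidity subgroup. Thus the whole task reduces to producing a finite-index subgroup $\bar{G}\subset A$ that avoids each of the finitely many nonzero elements $\bar{Q}_1,\ldots,\bar{Q}_t$.

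Next I would invoke residual finiteness of finitely generated abelian groups. The group $A$ is finitely generated abelian, so by the structure theorem $A\cong\Z^r\oplus T$ with $T$ a finite torsion part. For any nonzero $a\in A$ one readily produces a homomorphism $\phi_a:A\to F_a$ into a finite abelian group $F_a$ with $\phi_a(a)\neq 0$: if the free component of $a$ is nonzero, pick a coordinate $j$ in which it is nonzero and compose the projection $\Z^r\to\Z$ onto the $j$th factor with reduction modulo a sufficiently large integer $m$; if the free component vanishes, then $a$ lies in $T$ and one projects onto an appropriate cyclic summand of $T$ on which $a$ is nontrivial. Setting $\bar{G}_i:=\ker\phi_{\bar{Q}_i}$ gives, for each $i=1,\ldots,t$, a finite-index subgroup $\bar{G}_i\subset A$ with $\bar{Q}_i\notin\bar{G}_i$.

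Finally, the intersection $\bar{G}:=\bigcap_{i=1}^{t}\bar{G}_i$ has finite index in $A$ as a finite intersection of finite-index subgroups, and $\bar{Q}_i\notin\bar{G}$ for every $i$ since $\bar{G}\subset\bar{G}_i$. Pulling back, the subgroup $G:=\pi^{-1}(\bar{G})$ contains $H=\pi^{-1}(0)$, avoids every $Q_i$, and has finite index in $\Z^N$; by Theorem~E, $G$ is therefore a rigidity subgroup, which completes the proof. I do not expect a genuine obstacle here: the argument is a clean algebraic reduction, the only mild care being that $\pi(Q_i)\neq 0$ (ensured by the hypothesis $Q_i\notin H$) and that the Galois correspondence $G\leftrightarrow\bar{G}$ preserves finite indices (immediate). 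The real content of the lemma is supplied by the algebraic characterization of rigidity groups already established in Theorem~E.
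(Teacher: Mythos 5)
Your proof is correct, and it takes a genuinely different route from the paper's. The paper first reduces to $t=1$ via Corollary~\ref{sta3} (rigidity groups are closed under finite intersections), then chooses a $\Z$-basis $P_1,\ldots,P_s$ of $H$, completes it to a $\Q$-basis $P_1,\ldots,P_N$ of $\Q^N$, and considers the finite-index groups $G^{(M)}=\langle P_1,\ldots,P_s,MP_{s+1},\ldots,MP_N\rangle$; a coefficient-size argument (choosing $M$ larger than the absolute values of the coordinates of $Q_1$ in the complementary basis vectors, and using linear independence) shows $Q_1\notin G^{(M)}$ for suitable $M$. You instead pass to the quotient $A=\Z^N/H$ and invoke residual finiteness of finitely generated abelian groups to separate each nonzero $\bar Q_i$ from $0$ by a finite-index kernel, then intersect; this handles all $t$ elements at once without needing Corollary~\ref{sta3}. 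Both arguments ultimately rest on the same input, namely the implication ``finite index $\Rightarrow$ rigidity group'' from Theorem~E (and since a finite-index subgroup of $\Z^N$ necessarily contains some $k_Ne_N$, the degree hypothesis of Theorem~E is automatic, so your appeal to it is legitimate; there is also no circularity, as Lemma~\ref{sta4} is proved after Theorem~E in the paper). Your version is more conceptual and shorter, trading the paper's explicit, self-contained construction for the standard structure theorem for finitely generated abelian groups.
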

\begin{proof}
In view of Corollary~\ref{sta3}, we can assume that $t=1$. Let
$P_1,\ldots,P_s$ be a $\Z$-basis of $H$. Then $s\leq N$ and there
exist $P_{s+1},\ldots,P_N\in\Z^N$ such that the set
$\{P_1,\ldots,P_N\}$ is a $\Q$-basis of $\Q^N$. This means that
$P_1,\ldots,P_N$ are independent over $\Q$ (and over $\Z$ as well)
and the subgroup generated by $P_1,\ldots,P_N$ has finite index in
$\Z^N$.

For $M\in\N$ we let $G^{(M)}$ denote  the group generated by
$$
P_1,\ldots,P_s,MP_{s+1},\ldots,MP_{N}.
$$
Clearly, each $G^{(M)}$ has finite index in $\Z^N$, thus it is a
rigidity subgroup, and contains $H$. It is enough to prove that
there exists $M$ such that $Q_1\notin G^{(M)}$.

Suppose that $Q_1\in G^{(1)}$. Then there exist $a_1,\ldots,a_N\in\Z$
such that
$$
Q_1=a_1P_1+\ldots+a_sP_s+a_{s+1}P_{s+1}+\ldots+a_NP_N.
$$
Since $Q_1\notin H$, at least one of $a_{s+1},\ldots,a_N$ is
nonzero.

Let $\ov{M}\in\N$ be a number greater than the maximum of the
absolute values of $a_{s+1},...,a_N$. If $Q_1\in G^{(\ov{M})}$
then there exist $b_1,...,b_N\in\Z$ such that
$$
Q_1=b_1P_1+\ldots+b_sP_s+b_{s+1}\ov{M}P_{s+1}+\ldots+b_N\ov{M}P_N.
$$
By the choice of $\ov{M}$, the sequences $(a_{s+1},\ldots,a_N)$ and
$(b_{s+1}\ov{M},\ldots,b_N\ov{M})$ are different and we get a
contradiction with the independence of $P_1,\ldots,P_N$. \end{proof}

\vspace{2ex}

{\em Proof of Corollary~F.} \ Either apply  Lemma~\ref{sta4} to
$H=\Z P_1+\ldots+\Z P_s$  or observe that $P_{s+1},\ldots,P_N$ do not
belong to the subgroup generated by
$P_1,\ldots,P_s,2P_{s+1},\ldots,2P_N$ and this subgroup has finite
index in $\Z^N$. Now,  apply Theorem~E.\koniec

\subsection{Every finitely generated group of polynomials is a
group of global rigidity (proof of Theorem~G)} Assume that $G\subset{\cal P}_{\leq N}$
is an arbitrary subgroup such that in $G$ we can find a polynomial
of degree~$N$.

\begin{Lemma}\label{sta11} There exists a sequence $G_i\subset
{\cal P}_{\leq N}$, $i\geq1$, of subgroups of finite index in
${\cal P}_{\leq N}$ such that $G=\bigcap_{i\geq1}G_i$.\end{Lemma}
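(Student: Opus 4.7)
The plan is to exploit the fact that, via the identification of $\cp_{\leq N}$ with $\Z^N$ provided by the basis $\{x,x^2,\dots,x^N\}$, the quotient $\cp_{\leq N}/G$ is a finitely generated abelian group and hence residually finite. The candidate sequence $(G_k)_{k\geq 1}$ will consist of the pullbacks of the subgroups $kA$, where $A:=\cp_{\leq N}/G$.

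First I would set
$$G_k\;:=\;G+k\,\cp_{\leq N},\qquad k\geq 1.$$
Each $G_k$ manifestly contains $G$. Since $\cp_{\leq N}/G_k$ is a quotient of $\cp_{\leq N}/k\,\cp_{\leq N}\cong(\Z/k\Z)^N$, it is finite, so every $G_k$ has finite index in $\cp_{\leq N}$.

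Next I would verify the equality $\bigcap_{k\geq 1}G_k=G$. Denoting by $\pi:\cp_{\leq N}\to A$ the canonical projection, one has $\pi(G_k)=kA$ and $G_k=\pi^{-1}(kA)$, so the equality reduces to showing that $\bigcap_{k\geq 1}kA=\{0\}$ inside $A$. By the structure theorem for finitely generated abelian groups, $A\cong \Z^r\oplus T$ with $T$ finite; for any $k$ divisible by the exponent of $T$ one has $kA=k\Z^r\oplus\{0\}$, and the intersection of these subgroups over such $k$ is trivial. This yields $\bigcap_k G_k=G$, as required.

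The only genuine content here is residual finiteness of the finitely generated abelian group $A$, which is immediate from the structure theorem, so I do not anticipate a serious obstacle. The hypothesis $\max\{\deg P:P\in G\}=N$ does not in fact enter the argument for this lemma; I take it to be a convenience reflecting how the lemma will be invoked in the proof of Theorem~G, since any finite-index subgroup $G_k$ of $\cp_{\leq N}$ automatically contains a polynomial of degree~$N$ and is therefore eligible as input to Theorem~E.
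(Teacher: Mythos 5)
Your proof is correct, but it takes a different route from the paper's. The paper enumerates the countable complement ${\cal P}_{\leq N}\setminus G=\{R_1,R_2,\ldots\}$ and applies its separation lemma (Lemma~\ref{sta4}) to each $R_i$ separately, producing for each $i$ a finite-index subgroup $G_i\supset G$ with $R_i\notin G_i$; the conclusion $G=\bigcap_i G_i$ is then immediate. You instead give a uniform, explicit family $G_k:=G+k\,{\cal P}_{\leq N}$ and reduce the intersection claim to the residual finiteness of the finitely generated abelian group ${\cal P}_{\leq N}/G$, which you verify correctly via the structure theorem (taking $k$ ranging over multiples of the exponent of the torsion part kills the torsion, and $\bigcap_m m\Z^r=\{0\}$ handles the free part). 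Your argument is self-contained and arguably cleaner, since it does not route through Lemma~\ref{sta4}; the paper's version costs nothing extra because Lemma~\ref{sta4} is already needed there for Corollary~F, and it handles the excluded elements one at a time without any appeal to the structure theorem. Your closing remark is also accurate: the degree hypothesis plays no role in the lemma itself and only matters for feeding the $G_i$ into Theorem~E, and indeed any finite-index subgroup of ${\cal P}_{\leq N}$ automatically contains a degree-$N$ polynomial.
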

\begin{proof} Let us write ${\cal P}_{\leq N}\setminus
G=\{R_1,R_2,\ldots\}$. Using Lemma~\ref{sta4}, for each $i\geq1$,
we can find $G_i\subset {\cal P}_{\leq N}$ of finite index such
that $G\subset G_i$ and $R_i\notin G_i$. Clearly,
$G=\bigcap_{i\geq1}G_i$.\end{proof}

Assume that $\underline{k}^{(i)}=(k_1^{(i)},\ldots,k^{(i)}_N)$ is
the period of $G_i$, see Section 3.1. Then apply
Proposition~\ref{kl} (to $\pi_{\underline{k}^{(i)}}(G_i)$) to
obtain that there exist $p\in E(\beta\N)$ and $U_i\in{\cal
U}({\cal H}_i)$, $i\geq1$, such that
$$
p\,\text{-}\!\lim_{n\in\N} U_i^{P(n)}=Id$$ for $P\in G_i$ and
$p\,\text{-}\!\lim_{n\in\N} U_i^{R(n)}=0$ for the remaining $R\in{\cal
P}_{\leq N}\setminus G_i$, $i\geq1$. Set $U=U_1\oplus
U_2\oplus\ldots$ We have \beq\label{ml221} p\,\text{-}\!\lim_{n\in\N}
U^{P(n)}=Id\;\;\mbox{if and only if}\;\; P\in \bigcap_{i\geq1}
G_i.\eeq

Taking into account Lemma~\ref{sta11} and ~(\ref{ml221}) we have completed the proof of Theorem~G.

\scriptsize

\normalsize

V.\ Bergelson, Department of Mathematics, The Ohio State
University, Columbus, OH 43210, USA

 vitaly@math.ohio-state.edu

 \vspace{1ex}

S. Kasjan, Faculty of Mathematics and Computer Science, Nicolaus
Copernicus University,  Chopin street 12/18, 87-100 Toru\'n,
Poland

skasjan@mat.umk.pl

\vspace{1ex}

 M.\ Lema\'nczyk,
Faculty of Mathematics and Computer Science, Nicolaus Copernicus
University,  Chopin street 12/18, 87-100 Toru\'n, Poland

mlem@mat.umk.pl

\end{document}